\theoremstyle{plain} 
\newtheorem{lemma}{Lemma}[section]
\newtheorem{thm}[lemma]{Theorem}
\newtheorem{remark}[lemma]{Remark}
\newtheorem{prop}[lemma]{Proposition}
\newcommand{\rmd}{\mathrm d}
\begin{document}
\title[Random attractor and SRB measure for stochastic Hopf bifurcation]{Random attractor and SRB measure for stochastic Hopf bifurcation under discretization}
\author[Chuchu Chen, Jialin Hong, Yibo Wang]{Chuchu Chen, Jialin Hong, Yibo Wang*}
\address{LSEC, ICMSEC, Academy of Mathematics and Systems Science, Chinese Academy of Sciences, Beijing 100190, China,
\and 
School of Mathematical Sciences, University of Chinese Academy of Sciences, Beijing 100049, China}
\email{chenchuchu@lsec.cc.ac.cn; hjl@lsec.cc.ac.cn; wangyb@amss.ac.cn}
\thanks{This work is funded by the National Key R\&D Program of China under Grant (No. 2024YFA1015900 and No. 2020YFA0713701),  by the National Natural Science Foundation of China (No. 12031020, No. 12461160278, and No. 12471386), and by Youth Innovation Promotion Association CAS}
\subjclass{37M22, 37M25, 60H35}
\thanks{*Corresponding author}
\begin{abstract}
	Chaotic phases in stochastic differential equations are characterized by two essential long-time dynamical features: a random attractor capturing asymptotic geometry and a Sinai--Ruelle--Bowen (SRB) measure describing statistical information. 
	This paper investigates whether the stochastic Hopf bifurcation under discretization could inherit both features. 
	We establish that the stochastic Hopf bifurcation under discretization induces a discrete random dynamical system. 
	Further, we prove that this discrete system possesses a random attractor, and then derive the existence of an SRB measure by demonstrating a strictly positive numerical Lyapunov exponent.
	Numerical experiments visualize the retained random attractor and SRB measure for the discrete random dynamical system, revealing structures consistent with the theoretical chaotic phase. 
\end{abstract}
\keywords{Random attractor $\cdot$ SRB measure $\cdot$ Lyapunov exponent $\cdot$ Discrete random dynamical system}
\maketitle
\section{Introduction}

The interplay between noise and nonlinearity generates rich dynamical phenomena that are absent in deterministic systems. Prominent examples include stochastic bifurcations and shear-induced chaos, where random perturbations fundamentally alter qualitative behavior \cite{Lin2008,Wieczorek2009}. 
Understanding these phenomena, particularly in their chaotic phases, requires capturing two essential long-time dynamical features: the random attractor, which provides the geometric structure about asymptotic regime of the system as $t\rightarrow\infty$, and the Sinai--Ruelle--Bowen (SRB) measure, which is supported on this attractor and describes the statistical information of the system. 
Importantly, the SRB measure ensures the equality of temporal and spatial averages on the sets with positive Lebesgue measure (i.e., observable events in physical experiments and numerical simulations), making it indispensable for quantifying chaotic dynamics  \cite{Blumenthal2019,Young2002}.

This paper investigates whether these dynamical features can be preserved under discretization for the stochastic Hopf bifurcation, modeled by the following two-dimensional stochastic differential equation (SDE): 
\begin{equation}\label{SDE}
	\rmd X_t  = \left( \begin{bmatrix}
		\alpha & -\beta \\ \beta & \alpha 
	\end{bmatrix} X_t 
	+ \|X_t\|^2 \begin{bmatrix}
		-a & -b \\ b & -a 
	\end{bmatrix} X_t \right) \rmd t + \sigma \begin{bmatrix}
		\rmd W_t^1 \\ \rmd W_t^2
	\end{bmatrix}, 
\end{equation}
where $b\in\mathbb{R}$ quantifies shear strength, $\sigma>0$ is the diffusion coefficient, $a>0$, $\alpha, \beta \in \mathbb{R}$, and $\{W_t^j\}_{t\geq0}$ for $j=1, 2$ are independent standard Brownian motions. The continuous random dynamical system (RDS) induced by \eqref{SDE} is known to undergo a Hopf bifurcation (see, e.g., \cite{Baxendale1994,Deville2011}) and exhibits two distinct dynamical phases governed by the shear strength $b$:  a synchronization phase for small shear $b$ where all trajectories converge to a random equilibrium, and a chaotic phase for large shear $b$ induced by dynamical instabilities. 
Our primary focus is on the chaotic phase of \eqref{SDE}. 
It has been established in \cite{Doan2018} that the continuous RDS induced by \eqref{SDE} possesses a random attractor.
Moreover, recent works \cite{Baxendale2024,Chemnitz2023} have provided rigorous proof of strictly positive lower bounds for the top Lyapunov exponent of SDE \eqref{SDE}. 
Based on these results, it can be shown that there exists an SRB measure for the continuous system of \eqref{SDE} (see Proposition \ref{Prop:SRB}). 

%
%Therefore, the theory in \cite{Chekroun2011,Ledrappier1988}, which establishes the sufficient conditions for an SRB measure, together with the proven existence of a random attractor and a positive Lyapunov exponent, yields the existence of SRB measure for the continuous system. 

Since analytical solutions of nonlinear SDEs are generally unavailable, the random attractor and the SRB measure cannot be characterized explicitly, which necessitates the development of numerical methods to capture these dynamical features. While significant progress has been made in the numerical analysis of SDEs, particularly regarding the convergence, stability, and ergodicity of various schemes (see, e.g., \cite{Chen20252,Chen2025,Chen2023,Higham2002,Mao2013}), research into numerical studies %approximation and preservation 
of the random attractor and the SRB measure remains in its infancy \cite{Arnaudon2017,Arnaudon2018,Chekroun2011}. 
This motivates a fundamental question: Can numerical methods inherit the essential dynamical features from the underlying continuous RDS?

This paper gives an answer to this question and demonstrates that the key dynamical features, specifically the random attractor and the SRB measure, of the original RDS induced by \eqref{SDE} can be preserved under the discrete RDS generated by backward Euler method. 
In order to study the inheritance of the random attractor, we construct an appropriate random absorbing set by transforming the numerical approximation into a discrete random differential equation. 
This random absorbing set provides a sufficient condition to guarantee the existence of a random attractor for the discrete RDS. 
Furthermore, the existence of an SRB measure for the discrete RDS can be established provided that the corresponding Lyapunov exponent is positive. 
Proving this positivity is essential yet challenging, as the direct proof is nontrivial and typically involves complicated evaluations of numerical approximations and their variational properties. 
To overcome this difficulty, we relate the Lyapunov exponent of discrete RDS to that of continuous system via the numerical approximation error. 
Thus, by establishing the strong convergence of backward Euler method, we prove that the Lyapunov exponent of the discrete RDS remains positive provided that the continuous system exhibits a positive Lyapunov exponent, which in turn ensures the existence of an SRB measure for discrete RDS. 
In practical computations, the random attractor and the SRB measure are obtained by evolving a large set of initial conditions under fixed noise realization to approximate the asymptotic state (see, e.g., \cite{Doan2018,Arnold2000,Ochs2001,Keller1999}).
This approach leverages the fact that SRB measures, by ensuring the equivalence of temporal and spatial averages for observable sets (i.e., the sets with positive Lebesgue measure), are the natural candidates for quantifying statistics from such simulations \cite{Eckmann1985}. 
Crucially, our theoretical analysis on the existence of random attractor and SRB measure for discrete RDS offers a rigorous justification previously lacking for this computational approach in RDSs, thus supporting more reliable numerical exploration of chaotic phenomena in applications such as physics and biology.

Our theoretical results are complemented by numerical experiments that visualize the random attractor and the SRB measure. 
These computations confirm that the discrete RDS reproduces the Hopf bifurcation exhibited in the continuous system. 
In particular, under large shear strength, the numerical results clearly demonstrate a positive Lyapunov exponent along with the emergence of geometrically complicated random attractor and associated SRB measure, which provides the numerical evidence for the chaotic phase of discrete RDS.

This paper is organized as follows. Section \ref{Sec:preliminary} begins by reviewing essential concepts in the RDS theory, and then presents the existence of a random attractor and an SRB measure for continuous RDS. In Section \ref{Sec:discrete RDS}, we establish the existence of a random attractor for the discrete RDS induced by backward Euler method. We also demonstrate that the discrete RDS possesses an SRB measure by proving the positivity of numerical Lyapunov exponent. 
Section \ref{Sec:proof} provides detailed proofs of propositions used in Section \ref{Sec:discrete RDS}, including the ergodicity of numerical approximation and the strong convergence of backward Euler method. In Section \ref{Sec:numerical example}, we present several numerical experiments that validate our analytical findings and offer insightful visualizations of the dynamical behaviors predicted by the theory.

\section{Preliminary}\label{Sec:preliminary}
\subsection{Notation and concepts}
In this paper we denote $\|\cdot\|$ the Euclidean norm of vectors in $\mathbb{R}^2$, and denote $X\cdot Y = x_1y_1 + x_2y_2$ for $X=(x_1,x_2)^{\top}$ and $Y=(y_1,y_2)^{\top}$, where $X,Y\in\mathbb{R}^2$ and the superscript $\top$ represents the transpose of vectors. 
We use $C$ to denote an unspecified positive and finite constant, which may vary from one line to another but is always independent of the discretization parameters. 
Constants depending on
certain parameters $a,b,...$ are numbered as $C(a,b,...)$.

In order to demonstrate the random attractor and the SRB measure, we will briefly introduce those concepts in the theory of  RDS which are relevant for our case.  
We refer the reader to \cite{arnoldRDS} for a more detailed and systematic treatment.  
In the sequel $\mathbb{T}=\mathbb{R}$ or $\mathbb{Z}$ stands for the time. 
An RDS $\varphi$ on the state space $\mathbb{R}^2$ with time $\mathbb{T}$ is a pair of the following objects: 

(i) \textit{Model of the noise}: Let $(\Omega,\mathcal{F},\mathbb{P},\theta)$ be a metric dynamical system, i.e., $(\Omega,\mathcal{F},\mathbb{P})$ is a probability space, and $\theta$ is a flow of mappings $\{\theta_{t}\}_{t\in\mathbb{T}}$ on $\Omega$ (i.e., $\theta_{0}=\text{id}_{\Omega}$, $\theta_{t+s} = \theta_{t}\circ\theta_{s}$ for all $t, s\in\mathbb{T}$), which leaves the measure $\mathbb{P}$ invariant.  For simplicity we assume that $\theta$ is ergodic.

(ii) \textit{Model of the system perturbed by noise}: Let $\varphi$ be a cocycle over $\theta$, i.e., $\varphi$ is a measurable mapping: $\mathbb{T}\times\Omega\times\mathbb{R}^{2} \rightarrow \mathbb{R}^{2}$, $(t,\omega,x)\mapsto\varphi(t,\omega,x)$, such that $(t,x)\mapsto\varphi(t,\omega,x)$ is continuous for all $\omega\in\Omega$ and $\varphi$ satisfies the cocycle property: 
\begin{equation*}
	\varphi(0,\omega,\cdot) = \text{id}_{\mathbb{R}^2}(\cdot), \quad \varphi(t+s,\omega,\cdot) = \varphi(t,\theta_{s}\omega,\cdot)\circ\varphi(s,\omega,\cdot), \quad \text{for all} \ t, s \in\mathbb{T} \ \text{and} \ \omega\in\Omega, 
\end{equation*}
where $\circ$ means the composition of mappings. 
Then $\varphi$ is said to be an RDS (resp. a discrete RDS) on the state space $\mathbb{R}^2$ over a metric dynamical system $(\Omega,\mathcal{F},\mathbb{P},\theta)$ with time $\mathbb{T}=\mathbb{R}$ (resp. $\mathbb{T}=\mathbb{Z}$). 

Next we introduce the concept of random attractor. Suppose that there exists a random compact set $A(\omega)$ satisfying $\varphi(t,\omega,A(\omega))=A(\theta_{t}\omega)$ for all $t>0$, $\omega\in\Omega$. If $A$ attracts every bounded deterministic set $B\subset \mathbb{R}^{2}$, i.e., 
\begin{equation*}
	\lim\limits_{t\rightarrow\infty} d\left(\varphi(t,\theta_{-t}\omega,B) , A(\omega)\right) = 0, \quad \text{for all} \ \omega \in \Omega, 
\end{equation*}
then $A$ is said to be a random attractor for $\varphi$. Here $d(B_1,B_2)=\sup\{\inf\{d(x,y):y\in B_1\}: x\in B_2\}$ denotes the Hausdorff semi-distance between subsets $B_1$ and $B_2$ in $\mathbb{R}^2$ (see \cite{Crauel1994}). 
The random attractor provides the geometric information about asymptotic regime of RDS as $t\rightarrow\infty$. 

The SRB measure, supported on this random attractor, describes the statistical information of RDS. 
In the sequel we introduce the concept of SRB measure. 
Suppose that $\mu$ is a probability measure on $\Omega\times\mathbb{R}^2$ and
symbolically has the disintegration $\mu(\rmd\omega,\rmd x) = \mu_{\omega}(\rmd x)\mathbb{P}(\rmd\omega)$, where $\mu_{\omega}$ is the sample measure of $\mu$. 
If $\varphi^{*}(t,\omega,\cdot)\mu_{\omega} = \mu_{\theta_{t}\omega}$ for all $t\in\mathbb{R}$, $\omega\in\Omega$, then $\mu$ (resp. $\mu_{\omega}$) is said to be the invariant measure  (resp. invariant sample measure) of $\varphi$, where $\varphi^{*}(t,\omega,\cdot)\mu_{\omega}$ 
denotes the push-forward of $\mu_{\omega}$ under $\varphi(t,\omega,\cdot)$ (see \cite{arnoldRDS}).  
An invariant sample measure $\mu_{\omega}$ is called the SRB measure if $\varphi$ has a positive Lyapunov exponent, $\mu_{\omega}$ has absolutely continuous conditional measure on unstable manifold, and $\mu_{\omega}$ satisfies the following identity for all continuous functions $G$: 
\begin{equation}\label{physical measure}
	\lim\limits_{t\rightarrow\infty} \frac{1}{t} \int_{0}^{t} G\circ\varphi(s,\theta_{-s}\omega,x) \rmd s = \int_{A(\omega)} G(z) \mu_{\omega}(\rmd z), \quad \text{for all} \  \omega\in\Omega, \ x\in U, 
\end{equation}
where $U$ is a subset of $\mathbb{R}^2$ with positive Lebesgue measure (see \cite{Blumenthal2019,Young2017,Ochs2001}). 
The identity \eqref{physical measure} indicates that the spatial and temporal averages of observables coincide for a set of initial conditions having positive Lebesgue measure. 
Consequently, the SRB measure is also known as the natural or the physically relevant invariant measure.

Before concluding this subsection, we would like to mention another concept that we refer to as the stationary measure in the following sections. 
It is a fixed point of the Markov semigroup of associated SDE (denoted it by $\rho$) and is connected to the invariant sample measure via the relationship 
\begin{equation*}
	\rho(B)=\int_{\Omega} \mu_{\omega}(B) \mathbb{P}(\rmd \omega), \quad \text{for all} \ B\in\mathcal{B}(\mathbb{R}^2). 
\end{equation*}
The explicit expression of the stationary measure of SDE \eqref{SDE} can be found in, e.g., \cite{Deville2011,Doan2018}.

%Before proceeding, it is worth noting that another concept there is also a stochastic approach for studying the statistical information about dynamics of \eqref{SDE}. This approach is to seek the fixed point of the associated Markov semigroup of \eqref{SDE}, which we refer to as the stationary measure of the one-point Markov process in the following sections. 
%However, such an approach only models a single time series probabilistically and fails to capture many relevant dynamical properties, such as the comparison of evolution of nearby initial points under the same noise. 
%This is one reason why we work with the RDS approach in this paper. Its fundamental object is the sample measure of RDS rather than the stationary measure of Markov process. 

\subsection{Generation of an RDS}
In this subsection we address the generation of an RDS. 
We first rewrite \eqref{SDE} as 
\begin{equation}\label{compact SDE}
	\rmd X_{t} = F(X_{t}) \rmd t + \sigma \rmd W_t,
\end{equation} 
where $X_{t} = (x_{t}, y_{t})^{\top}\in\mathbb{R}^2$, $W_{t}$ is a $2$-dimensional Brownian motion, and 
\begin{equation} \label{nonlinearity}
	F: \mathbb{R}^2 \rightarrow \mathbb{R}^2, \quad 
	X \mapsto  \begin{bmatrix}
		\alpha & -\beta \\ \beta & \alpha 
	\end{bmatrix} X
	+ \|X\|^2 \begin{bmatrix}
		-a & -b \\ b & -a 
	\end{bmatrix}X.  
\end{equation} 
Without loss of generality we assume that $W_t$ is a two-sided Brownian motion. 
For our applications, we will consider a special metric dynamical system by choosing $(\Omega,\mathcal{F},\mathbb{P})$ to be the Wiener space. Specifically, $\Omega = C_{0}(\mathbb{R}; \mathbb{R}^2)$ is the space of all continuous functions $\omega: \mathbb{R} \rightarrow \mathbb{R}^2$ satisfying $\omega(0)=0$ endowed with the compact open topology, $\mathcal{F}$ is the Borel $\sigma$-algebra of $\Omega$, and $\mathbb{P}$ is the Wiener measure. We define the shift mappings on $\Omega$ as $\theta_t \omega(\cdot) := \omega(t+\cdot) - \omega(t)$. Then $(\Omega,\mathcal{F},\mathbb{P},\theta)$ is an ergodic metric dynamical system that drives the SDE \eqref{compact SDE}, with $W_{t}(\omega)=\omega(t)$. See \cite[Appendix]{arnoldRDS} for more details. 

Following standard procedures, we next state that the SDE \eqref{compact SDE} induces an RDS through its conjugation to a random differential equation via a suitable transformation. 
To this end, for $\gamma>0$, we introduce the linear equation 
\begin{equation}\label{OU1}
	\rmd Z = -\gamma Z \rmd t + \rmd\omega(t), 
\end{equation}
where $Z \in \mathbb{R}^2$, and define $Z^*(\omega) := -\int_{-\infty}^{0} e^{\gamma s} \omega(s)\rmd s$. Then $t \mapsto Z^*(\theta_t \omega)$ solves \eqref{OU1}, i.e., 
\begin{equation}\label{OU2}
	Z^*(\theta_t \omega) = Z^*(\omega) - \gamma \int_{0}^{t} Z^*(\theta_s \omega) \rmd s + \omega(t). 
\end{equation}
From \cite[Lemma 2.1]{Duan2003} we know that there exists a $\{\theta_t\}_{t\in\mathbb{R}}$-invariant subset $\Omega_1 \subset \Omega$ of full measure such that $\lim_{t\rightarrow\pm\infty} \frac{|\omega(t)|}{|t|}=0$ for all $\omega\in\Omega_{1}$. Moreover, for $\delta\in(\frac{1}{2},1)$ and $\omega \in \Omega_1$, there exist two positive constants $L_{\delta,\omega}$ and $\tilde{L}_{\delta,\omega}$ such that 
\begin{equation}\label{Z*}
	\|Z^*(\theta_t \omega)\|^2  \leq L_{\delta,\omega} + \tilde{L}_{\delta,\omega}|t|^{\delta}, \quad \forall \, t \in \mathbb{R}. 
\end{equation}
Replacing $\mathcal{F}$ by $\mathcal{F}_{1} = \{\Omega_1\cap A, A\in \mathcal{F}\}$ and considering the probability measure $\mathbb{P}_1$ which is the restriction of Wiener measure $\mathbb{P}$ to this new $\sigma$-algebra,  we obtain a metric dynamical system $(\Omega_1,\mathcal{F}_1,\mathbb{P}_1,\theta)$. 
In the following we focus on this metric dynamical system, and for convenience, still denote it by $(\Omega,\mathcal{F},\mathbb{P},\theta)$.

We introduce the mappings $\mathcal{T},\mathcal{T}^{-1}: \Omega\times\mathbb{R}^2\rightarrow\mathbb{R}^2$ defined by
\begin{equation}\label{mathcalT}
	\mathcal{T}(\omega,\bm{x}) := \bm{x} - \sigma Z^*(\omega), \quad \mathcal{T}^{-1}(\omega,\bm{x}) := \bm{x} + \sigma Z^*(\omega). 
\end{equation}
Then under the transformation $\hat{X}_{t} = \mathcal{T}(\theta_{t}\omega,X_{t})$, the SDE \eqref{compact SDE} can be transformed into the following random differential equation 
\begin{equation}\label{RDE1}
	\rmd \hat{X}_{t} = F\left(\mathcal{T}^{-1}(\theta_{t}\omega,\hat{X}_{t})\right) \rmd t + \gamma\sigma Z^*(\theta_t \omega) \rmd t. 
\end{equation}
Doan et al. \cite{Doan2018} have demonstrated that for any $\omega\in\Omega$ and $\bm{x}\in\mathbb{R}^2$, \eqref{RDE1} possesses a unique solution given by
\begin{equation*}
	\hat{\varphi}(t,\omega,\bm{x}) = \bm{x} + \int_{0}^{t} F\left(\mathcal{T}^{-1}(\theta_{s}\omega,\hat{\varphi}(s,\omega,\bm{x}))\right) \rmd s + \gamma\sigma\int_{0}^{t} Z^*(\theta_s\omega) \rmd s, \quad \forall \,  t\geq0, 
\end{equation*}
and $\hat{\varphi}$ forms an RDS. 
Moreover, it can be verified that the mapping $\varphi$ determined by the conjugacy relation 
\begin{equation}\label{conjugacy}
	\varphi(t, \omega, \bm{x}) = \mathcal{T}^{-1}(\theta_{t}\omega, \cdot) \circ \hat{\varphi}(t, \omega, \cdot) \circ \mathcal{T}(\omega,\bm{x})
\end{equation}
solves the SDE \eqref{compact SDE}, i.e., for any  $\omega\in\Omega$ and $\bm{x}\in\mathbb{R}^2$, $\varphi$ satisfies the integral equation
\begin{equation*}
	\varphi(t, \omega, \bm{x}) = \bm{x} + \int_{0}^{t} F(\varphi(s, \omega, \bm{x})) \rmd s + \sigma \omega(t), \quad \forall \,  t\geq0. 
\end{equation*}
It follows from the cocycle property of $\hat{\varphi}$ and \eqref{conjugacy} that the mapping $\varphi$ induced by \eqref{compact SDE} is a cocycle, and thus forms an RDS over $(\Omega,\mathcal{F},\mathbb{P},\theta)$.

\subsection{Random attractor and SRB measure for the RDS}
We first present the results concerning random attractor and invariant sample measure for the RDS $\varphi$.  
Following \cite{Doan2018}, it suffices to construct a random absorbing set for $\varphi$. In fact, one can show that for all $\bm{x}\in\mathbb{R}^2$ and $\omega\in\Omega$, 
\begin{equation*}
	\|\varphi(t, \theta_{-t}\omega, \bm{x})\|^2 \leq e^{-\sqrt{a}t} \|\bm{x}-\sigma Z^{*}(\theta_{-t}\omega)\|^2 + C\int_{0}^{t} e^{-\sqrt{a}r} \left( \|Z^{*}(\theta_{-r}\omega)\|^4 + 1 \right) \rmd r + 2\sigma^2\|Z^{*}(\omega)\|^2, 
\end{equation*}
where the positive constant $C$ depends on the coefficients of SDE \eqref{compact SDE}. 
Define 
\begin{equation*}
	\mathcal{A}(\omega) := 1+C\int_{0}^{\infty} e^{-\sqrt{a}r} \left( \|Z^{*}(\theta_{-r}\omega)\|^4 + 1 \right) \rmd r + 2\sigma^2\|Z^{*}(\omega)\|^2. 
\end{equation*}
Then it follows from $\|Z^*(\omega)\|<\infty$ and \eqref{Z*} that $|\mathcal{A}(\omega)|<\infty$ for all $\omega\in\Omega$. Thus the random set defined by
\begin{equation*}
	K(\omega) := \left\{ \bm{x}\in\mathbb{R}^2: \|\bm{x}\|^2 \leq \mathcal{A}(\omega) \right\}
\end{equation*}
is a random absorbing set. From this, one can derive the following results. 
\begin{prop}[\cite{Doan2018}]\label{Prop:attractor}
	There exists a random attractor $A(\omega)$ for the RDS $\varphi$, i.e., for every bounded deterministic set $B\subset \mathbb{R}^{2}$, 
	\begin{equation*}
		\lim\limits_{t\rightarrow\infty} d\left(\varphi(t,\theta_{-t}\omega,B) , A(\omega)\right) = 0, \quad \text{for all} \ \omega \in \Omega. 
	\end{equation*} 
	In addition, the sample measure $\mu_{\omega}:=\delta_{A(\omega)}$ is invariant under $\varphi$.  
\end{prop}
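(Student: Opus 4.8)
The plan is to derive both assertions from the general theory of random attractors, treating the compact random absorbing set $K(\omega)$ constructed above as the one nontrivial ingredient. First I would verify that $K(\omega)$ is an admissible compact random absorbing set: since $K(\omega)$ is the closed Euclidean ball of random radius $\sqrt{\mathcal{A}(\omega)}$, compactness for each fixed $\omega$ is immediate, and measurability of $\omega \mapsto K(\omega)$ reduces to measurability of $\omega \mapsto \mathcal{A}(\omega)$, which follows from measurability of $Z^*$ and dominated convergence in the defining integral, with \eqref{Z*} guaranteeing $\mathcal{A}(\omega) < \infty$ for every $\omega$. For the absorption property I would fix a bounded deterministic set $B$ and insert it into the displayed pullback estimate: as $t \to \infty$ the term $e^{-\sqrt{a}t}\|\bm{x} - \sigma Z^*(\theta_{-t}\omega)\|^2$ tends to $0$ uniformly over $\bm{x} \in B$ (using $\|Z^*(\theta_{-t}\omega)\|^2 = o(t)$ from \eqref{Z*}), while the integral term is bounded above by the convergent integral appearing in $\mathcal{A}(\omega)$. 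Hence there is a random time $t_B(\omega)$ with $\varphi(t, \theta_{-t}\omega, B) \subset K(\omega)$ for all $t \geq t_B(\omega)$, which is exactly pullback absorption.

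With a continuous cocycle admitting a compact random absorbing set, I would invoke the standard random attractor existence theorem (Crauel--Flandoli \cite{Crauel1994}; see also \cite{arnoldRDS}): the pullback omega-limit set of $K$,
\begin{equation*}
	A(\omega) = \bigcap_{T \geq 0} \overline{\bigcup_{t \geq T} \varphi\!\left(t, \theta_{-t}\omega, K(\theta_{-t}\omega)\right)},
\end{equation*}
is a random compact set that is strictly invariant, $\varphi(t, \omega, A(\omega)) = A(\theta_t \omega)$, and attracts every bounded deterministic set in the Hausdorff semi-distance. This establishes the first assertion.

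For the invariance of $\mu_\omega := \delta_{A(\omega)}$ I would argue directly from strict invariance. The push-forward of a measure carried by $A(\omega)$ under $\varphi(t, \omega, \cdot)$ is carried by $\varphi(t, \omega, A(\omega)) = A(\theta_t \omega)$; writing out $\varphi^*(t, \omega, \cdot)\delta_{A(\omega)}$ and applying this identity gives $\varphi^*(t, \omega, \cdot)\mu_\omega = \mu_{\theta_t \omega}$ for all $t > 0$, and the extension to all $t \in \mathbb{R}$ uses invertibility of the cocycle $\varphi(t,\omega,\cdot)$. This is the claimed invariance.

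I expect the main obstacle to lie in the technical verification of the measurability of $K(\omega)$ and of the uniformity over $\bm{x} \in B$ in the pullback estimate, rather than in the abstract step; once absorption is secured, both the existence of $A(\omega)$ and the invariance of $\delta_{A(\omega)}$ follow formally from the cited theorem and from strict invariance of the attractor.
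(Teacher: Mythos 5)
Your proposal is correct and follows essentially the same route as the paper, which likewise constructs the compact random absorbing set $K(\omega)$ from the displayed pullback estimate and then invokes the standard attractor-existence theorem (the paper simply cites \cite{Doan2018}, via the same Theorem B.2 it uses later for the discrete RDS), with the invariance of $\mu_{\omega}=\delta_{A(\omega)}$ read off from the strict invariance $\varphi(t,\omega,A(\omega))=A(\theta_{t}\omega)$ exactly as you do. The only caveats in your write-up---treating the set-valued notation $\delta_{A(\omega)}$ as a canonically push-forwardable measure, and appealing to invertibility of $\varphi(t,\omega,\cdot)$ to extend invariance to negative times even though backward solutions of this cubic drift can explode---are shared with, and no worse than, the paper's own level of rigor at this point.
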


Next, we introduce the concept of the Lyapunov exponent, the sign of which is crucial in determining whether the sample measure $\mu_{\omega}$ is an SRB measure. 
Linearizing \eqref{SDE} along a trajectory $\{X_t: t \geq 0\}$ gives the process $\{U_t: t \geq 0\}$ satisfying 
\begin{equation}\label{variational SDE}
	\rmd U_t  = \left( \begin{bmatrix}
		\alpha & -\beta \\ \beta & \alpha 
	\end{bmatrix} U_t 
	+ \|X_t\|^2 \begin{bmatrix}
		-a & -b \\ b & -a 
	\end{bmatrix} U_t + 2 X_t \cdot U_t  \begin{bmatrix}
		-a & -b \\ b & -a 
	\end{bmatrix} X_t \right) \rmd t. 
\end{equation}
Here $U_{t}\in\mathbb{R}^2$ should be regarded as a vector at the point $X_{t}$. 
The \textit{(top) Lyapunov exponent}, which characterizes the long-time rate of exponential growth or decay of the process $\{U_t: t\geq0\}$, is defined by
\begin{equation}\label{lyp1}
	\lambda = \lambda(\alpha, a, b, \sigma) := \lim\limits_{t \rightarrow \infty} \frac{1}{t} \log \|U_t\|, 
\end{equation}
which depends on the coefficients of SDE \eqref{compact SDE}. 
Denote $\gamma_t = \|X_t\|$. Then we quote from \cite[Theorem 3.1]{Baxendale2024} the following theorem which addresses the positivity of $\lambda$. 
\begin{prop}\label{Prop:positive lyp}
	Given $\alpha\in\mathbb{R}$, $a>0$ and $\sigma>0$, the process $\gamma_{t}$ has a unique stationary measure $\tilde{\nu}$. Moreover, 
	\begin{equation*}
		\lambda(\alpha,a,b,\sigma) \sim (2b\sigma)^{\frac{2}{3}} \lambda_{0} \int_{\mathbb{R}^{+}} \gamma^{\frac{2}{3}} \tilde{\nu}(\rmd \gamma) \qquad \text{as} \ b \rightarrow \infty, 
	\end{equation*}
	where the constant  $\lambda_{0}=\frac{\pi}{2^{1/3}3^{1/6}|\Gamma(1/3)|^2} \approx 0.29$ with $\Gamma(y)=\int_{0}^{\infty}t^{y-1}e^{-t}\rmd t$ for $y>0$ being the Gamma function. 
	Here $\lambda_1\sim\lambda_2$ means that there exist constants $C, \tilde{C}>0$ such that $C \lambda_1<\lambda_2<\tilde{C} \lambda_1$. 
\end{prop}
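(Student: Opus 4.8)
The natural route is a polar-coordinate reduction that decouples the radius from the angle, followed by a large-shear scaling analysis of the projective (angular) process; this is in essence the strategy of \cite{Baxendale2024}, which I now outline. The plan is to first pass to polar coordinates $X_t = \gamma_t(\cos\phi_t,\sin\phi_t)$. A direct application of It\^o's formula, exploiting that both coefficient matrices have the form $cI + dJ$ with $J$ skew-symmetric (so that $X\cdot JX = 0$), yields a closed scalar SDE for the radius,
\begin{equation*}
	\rmd\gamma_t = \left[(\alpha - a\gamma_t^2)\gamma_t + \frac{\sigma^2}{2\gamma_t}\right]\rmd t + \sigma\,\rmd B_t,
\end{equation*}
driven by a one-dimensional Brownian motion $B$. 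Crucially, $b$ does not appear here. On $(0,\infty)$ this diffusion is nondegenerate, the $\sigma^2/(2\gamma)$ term repels the origin (as for a two-dimensional Bessel process) and the $-a\gamma^3$ term confines at infinity, so a standard scale-function/speed-measure argument gives the unique stationary measure $\tilde\nu$ with explicit density proportional to $\gamma\exp\!\big(\tfrac{\alpha}{\sigma^2}\gamma^2 - \tfrac{a}{2\sigma^2}\gamma^4\big)$. This settles the first assertion and shows $\int_{\mathbb{R}^+}\gamma^{2/3}\,\tilde\nu(\rmd\gamma)$ is a finite, $b$-independent constant.

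For the Lyapunov exponent I would track the angle $\vartheta_t$ of the tangent vector $U_t$ relative to $X_t$. Writing $\tfrac{\rmd}{\rmd t}\log\|U_t\| = U_t\cdot\dot U_t/\|U_t\|^2$ and inserting \eqref{variational SDE} gives, after the same skew-symmetry simplifications,
\begin{equation*}
	\frac{\rmd}{\rmd t}\log\|U_t\| = \alpha - 2a\gamma_t^2 - a\gamma_t^2\cos 2\vartheta_t + b\gamma_t^2\sin 2\vartheta_t,
\end{equation*}
while $\vartheta_t$ itself satisfies the (given $\gamma$) autonomous equation
\begin{equation*}
	\rmd\vartheta_t = \big[a\gamma_t^2\sin 2\vartheta_t + b\gamma_t^2(1+\cos 2\vartheta_t)\big]\rmd t - \frac{\sigma}{\gamma_t}\,\rmd\tilde B_t,
\end{equation*}
where $\tilde B$ is independent of $B$ (the angular part of the radius drift cancels in $\vartheta = \varphi - \phi$). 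By the multiplicative ergodic theorem, $\lambda$ equals the average of the growth rate against the unique stationary measure of the joint process $(\gamma_t,\vartheta_t)$. Since the $\alpha - 2a\gamma^2$ part contributes only an $O(1)$ term, the growth of $\lambda$ with $b$ is carried entirely by $\mathbb{E}\big[-a\gamma^2\cos 2\vartheta + b\gamma^2\sin 2\vartheta\big]$.

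The crux is the asymptotics of this expectation as $b\to\infty$. The angular drift $b\gamma^2(1+\cos 2\vartheta)=2b\gamma^2\cos^2\vartheta$ is nonnegative and degenerates only at the bottleneck $\vartheta=\pi/2$, so as $b\to\infty$ the invariant measure concentrates there. Setting $\vartheta = \tfrac{\pi}{2} + \eta$ and rescaling $\eta = \ell\,\zeta$ with $\ell = \big(\sigma^2/(2b\gamma^4)\big)^{1/3}$ — the unique scale balancing the $2b\gamma^2\eta^2$ drift against the $\sigma^2/(2\gamma^2)$ diffusion — reduces the local dynamics to the canonical parameter-free diffusion $\rmd\zeta = \zeta^2\,\rmd s + \rmd\hat B_s$. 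Its stationary density under constant probability flux is $p(\zeta)\propto e^{2\zeta^3/3}\int_\zeta^\infty e^{-2u^3/3}\,\rmd u$, an Airy-type profile whose moments evaluate to Gamma functions at $1/3$; its first moment produces exactly the constant $\lambda_0$. Tracking the scaling, $2b\gamma^2\ell = (2b\sigma)^{2/3}\gamma^{2/3}$, so the dominant contribution of the integrand is $(2b\sigma)^{2/3}\gamma^{2/3}\lambda_0$, and averaging over $\tilde\nu$ gives the claimed equivalence.

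I expect the genuine obstacle to be making this slow--fast decoupling rigorous rather than the algebra above. One must justify the quasi-static treatment of the slow radius $\gamma$ relative to the $O(b)$-fast angle $\vartheta$, control the contribution of $\vartheta$ away from the bottleneck (where the fast deterministic sweep makes the invariant density negligibly small), and establish the \emph{matching} upper and lower bounds implicit in the relation $\lambda_1\sim\lambda_2$, as opposed to a mere leading-order heuristic. Identifying the exact constant $\lambda_0$ — i.e.\ evaluating the relevant moment of the Airy-type density and confirming its sign yields a strictly positive exponent — is the delicate final step, and is precisely where the analysis of \cite{Baxendale2024} does the essential work.
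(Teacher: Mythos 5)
Your proposal is correct and takes the same route as the paper: the paper offers no proof of this proposition at all, quoting it verbatim from \cite[Theorem 3.1]{Baxendale2024}, and your sketch is precisely a reconstruction of that reference's strategy, with your intermediate formulas (the $b$-independent radial SDE and its stationary density, the equation for $\vartheta=\xi-\phi$, and the growth-rate functional $\alpha-2a\gamma^2-a\gamma^2\cos2\vartheta+b\gamma^2\sin2\vartheta$) matching exactly the paper's own computations \eqref{gamma&phi}, \eqref{hatQ and Q} and the $\psi_t$ equation in Section 2. You also correctly identify, and honestly defer to \cite{Baxendale2024}, the genuinely hard steps (rigorous slow--fast decoupling, two-sided bounds for the relation $\sim$, and extraction of $\lambda_0$ from the Airy-type constant-flux density), which is exactly where the cited work's contribution lies.
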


We next assert that the sample measure $\mu_{\omega}$ is an SRB measure when $b$ exceeds a critical shear level. 
\begin{prop}\label{Prop:SRB}
	Under conditions of Proposition \ref{Prop:positive lyp}, there exists a constant $b_{0}>0$ such that the sample measure $\mu_{\omega}$ of $\varphi$ is an SRB measure whenever $b>b_{0}$. 
\end{prop}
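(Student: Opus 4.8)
\emph{Proof strategy.} The plan is to verify, for all sufficiently large $b$, the three defining properties of the SRB measure for the invariant sample measure $\mu_{\omega}$ of Proposition \ref{Prop:attractor}: a strictly positive top Lyapunov exponent, absolute continuity of the conditional measures along unstable manifolds, and the physical--measure identity \eqref{physical measure}. Only the first of these depends essentially on $b$, and it is supplied by Proposition \ref{Prop:positive lyp}; the remaining two follow from the general structure of nondegenerate white-noise RDS.

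First I would extract positivity of $\lambda$. Since $\tilde{\nu}$ is the stationary law of the radial process $\gamma_{t}=\|X_{t}\|$ and the generator of \eqref{compact SDE} is uniformly elliptic (as $\sigma>0$), $\tilde{\nu}$ is supported on $(0,\infty)$ with rapidly decaying tails, so that $\int_{\mathbb{R}^{+}}\gamma^{2/3}\tilde{\nu}(\rmd\gamma)\in(0,\infty)$. Combined with $\lambda_{0}\approx0.29>0$ and $\sigma>0$, the equivalence in Proposition \ref{Prop:positive lyp} forces $\lambda(\alpha,a,b,\sigma)\to+\infty$ as $b\to\infty$. Unwinding the definition of $\sim$, there is a constant $b_{0}>0$ such that $\lambda(\alpha,a,b,\sigma)>0$ for every $b>b_{0}$, which is the required positive Lyapunov exponent.

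Next I would record the hyperbolic structure that makes the unstable manifold one-dimensional. The two exponents $\lambda_{1}=\lambda\geq\lambda_{2}$ of the planar system satisfy Liouville's identity $\lambda_{1}+\lambda_{2}=\lim_{t\to\infty}\frac{1}{t}\int_{0}^{t}\operatorname{div}F(X_{s})\,\rmd s$. A direct computation gives $\operatorname{div}F(X)=2\alpha-4a\|X\|^{2}$; crucially the shear matrix $\begin{bmatrix}0 & -b\\ b & 0\end{bmatrix}$ is skew-symmetric, so both $\operatorname{div}F$ and the radial dynamics of $\gamma_{t}$ are independent of $b$. By ergodicity $\lambda_{1}+\lambda_{2}=2\alpha-4a\int_{\mathbb{R}^{+}}\gamma^{2}\tilde{\nu}(\rmd\gamma)$ is therefore a constant $C_{0}$ independent of $b$, while $\lambda_{1}=\lambda\to+\infty$. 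Hence $\lambda_{2}=C_{0}-\lambda_{1}\to-\infty$, so for $b$ large we have $\lambda_{2}<0<\lambda_{1}$: the Oseledets splitting is hyperbolic with a one-dimensional unstable subspace, and $\mu_{\omega}$ is singular with respect to planar Lebesgue measure.

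Finally I would invoke the general SRB criterion for such flows. Because $\sigma>0$, the one-point Markov process has a smooth transition density and a stationary density $\rho$, and the flow generated by \eqref{compact SDE} is a smooth cocycle of diffeomorphisms with integrable derivative cocycle. Under these hypotheses the random Pesin entropy formula holds, which by the Ledrappier--Young characterization in the random setting (cf. \cite{Blumenthal2019,Young2017,Ochs2001}) is equivalent, once the top exponent is positive, to $\mu_{\omega}$ having absolutely continuous conditional measures on the unstable manifolds; this is the second property. The identity \eqref{physical measure} then follows from the Birkhoff ergodic theorem for the ergodic skew-product together with this absolute continuity, so that time averages converge to $\int G\,\rmd\mu_{\omega}$ for $\mu_{\omega}$-almost every, and hence for Lebesgue-almost every, starting point in a basin $U$ of positive Lebesgue measure. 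Combining the three properties shows $\mu_{\omega}$ is an SRB measure for $b>b_{0}$. I expect the main obstacle to be the second property: establishing absolute continuity of the conditional measures along unstable manifolds is not a soft consequence of $\lambda>0$ but requires the full random Ledrappier--Young/Pesin machinery and careful verification of its hypotheses (ellipticity, smoothness, and moment bounds on the flow and its derivatives), whereas positivity of $\lambda$ reduces immediately to Proposition \ref{Prop:positive lyp}.
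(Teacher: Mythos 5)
Your proposal is correct in substance but takes a genuinely different route from the paper, whose proof is essentially a three-line citation: the authors invoke \cite[Theorem C.2]{Chekroun2011}, whose hypotheses --- existence of a random attractor (Proposition \ref{Prop:attractor}), H\"ormander's condition (immediate from $\sigma>0$ and $F\in C^{\infty}$), and a positive top Lyapunov exponent (Proposition \ref{Prop:positive lyp}) --- are each verified directly. You instead unpack what that packaged criterion contains: the same positivity step (your reading that $\int\gamma^{2/3}\,\tilde{\nu}(\rmd\gamma)\in(0,\infty)$ forces $\lambda\to\infty$ is the intended one, and is legitimate precisely because the radial dynamics, hence $\tilde{\nu}$, is $b$-independent), followed by a direct appeal to the random Ledrappier--Young entropy-formula machinery to obtain absolute continuity of the unstable conditionals and then the physicality identity \eqref{physical measure}. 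Your Liouville-trace computation $\lambda_{1}+\lambda_{2}=2\alpha-4a\int\gamma^{2}\,\tilde{\nu}(\rmd\gamma)$, with $\operatorname{div}F(X)=2\alpha-4a\|X\|^{2}$ independent of the skew-symmetric shear, is a genuine addition absent from the paper: it makes explicit that the exponent sum is $b$-independent, hence $\lambda_{2}\to-\infty$ and the unstable direction is one-dimensional. What the paper's route buys is brevity and safety: the technical hypotheses of the LY theory (reduction of the flow to a time-one random diffeomorphism cocycle, $\log^{+}$-integrability of $\|D\varphi^{\pm1}\|$, the role of the absolutely continuous stationary density) are absorbed into the cited theorem, whereas in your write-up the pivotal step ``under these hypotheses the random Pesin entropy formula holds'' remains at citation level and would require verifying those integrability and moment bounds for the cubic flow, as you yourself acknowledge. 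Two small flags: your side claim that $\mu_{\omega}$ is singular with respect to planar Lebesgue measure does not follow from $\lambda_{2}<0<\lambda_{1}$ alone --- it needs $\lambda_{1}+\lambda_{2}<0$, i.e.\ $2\alpha<4a\int\gamma^{2}\,\tilde{\nu}(\rmd\gamma)$, which is in fact true (it can be read off the explicit stationary density) but is not established by your argument; and the passage from Birkhoff plus absolute continuity to \eqref{physical measure} for Lebesgue-a.e.\ initial point in a basin $U$ tacitly uses absolute continuity of the stable foliation, which deserves at least a mention. Neither flaw is load-bearing for the proposition itself.
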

\begin{proof}
	Following \cite[Theorem C.2]{Chekroun2011}, to prove that $\mu_{\omega}$ is an SRB measure, it suffices to verify that (i) $\varphi$ possesses a random attractor, (ii) H\"ormander’s condition is satisfied, and (iii) there exists a positive Lyapunov exponent associated with \eqref{compact SDE}. We now proceed to verify these conditions. 
	
	First, we have already demonstrated the existence of the random attractor $A(\omega)$ in Proposition \ref{Prop:attractor}. 
	Second, H\"ormander’s condition is satisfied since the driven noise of \eqref{compact SDE} is non-degenerate (i.e., $\sigma>0$) and the nonlinear term $F$ is a $2$-dimensional $C^{\infty}$ vector field of $\mathbb{R}^2$. 
	Third, according to Proposition \ref{Prop:positive lyp}, there exists a constant $b_{0}>0$, such that the top Lyapunov exponent $\lambda>0$ whenever $b>b_{0}$. 
	In summary, the sample measure of $\varphi$ is an SRB measure whenever $b>b_{0}$. 
\end{proof}

We end this section by providing a useful lemma concerning $\lambda$. Introduce the polar coordinates $X_t = \gamma_t (\cos\phi_t,\sin\phi_t)^{\top}$ with $\gamma_t = \|X_t\|$ and $U_t= \|U_t\| (\cos \xi_t,\sin \xi_t)^{\top}$. By It\^o's formula applied to \eqref{variational SDE} we can derive 
\begin{equation}\label{logU}
	\rmd \log \|U_t\| = \hat{Q}(X_t,\xi_t) \rmd t, 
\end{equation}
where the function $\hat{Q}:\mathbb{R}^{2}\times\mathbb{R}/(\pi\mathbb{Z})\rightarrow\mathbb{R}$ is given by 
\begin{equation}\label{hatQ}
	\hat{Q}(x, y, \xi) = \alpha - 2a (x^2+y^2) - (2b\cos2\xi+2a\sin2\xi) xy + (b\sin2\xi-a\cos2\xi) (x^2-y^2).  
\end{equation}
For $p\geq2$, we define the Lyapunov function $\Gamma: \mathbb{R}^+ \times \mathbb{R}/(\pi\mathbb{Z}) \rightarrow [1, \infty)$ as 
\begin{equation}\label{Lyapunov function}
	\Gamma(\gamma, \psi) := \gamma^{2p} + \gamma^2\psi^{2p} + M, 
\end{equation} 
where $M\geq1$ is an arbitrary constant. 
Then we have the following lemma. 
\begin{lemma}
	There exist $\iota\in(0,1)$ and $\kappa\in(0,\infty)$ such that for any $t\geq0$, 
	\begin{equation}\label{ergodic2}
		\left| \mathbb{E}^{X_{0},\xi_{0}}[\hat{Q}(X_t,\xi_t)] - \lambda \right| 
		\leq \kappa \iota^t \Gamma(\gamma_{0},\psi_{0}), 
	\end{equation}
	where $\gamma_{0}=\|X_{0}\|$ and $\psi_{0}=\xi_{0}-\phi_{0}$. 
\end{lemma}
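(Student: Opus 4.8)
The plan is to read inequality \eqref{ergodic2} as a geometric (exponential) ergodicity estimate for the Markov process underlying the linearized dynamics, applied to the observable $\hat{Q}$, together with the identification of the stationary average of $\hat{Q}$ with the Lyapunov exponent $\lambda$. The first step is to reduce the phase space. Because both matrices in the drift \eqref{nonlinearity} commute with planar rotations and the driving noise $\sigma\,\rmd W_t$ is isotropic, the SDE \eqref{SDE} is rotation-equivariant. In the polar-type coordinates $X_t=\gamma_t(\cos\phi_t,\sin\phi_t)^{\top}$, $U_t=\|U_t\|(\cos\xi_t,\sin\xi_t)^{\top}$, and with $\psi_t=\xi_t-\phi_t$, the rotation acts only on $\phi_t$, so the generator leaves functions of $(\gamma,\psi)$ invariant and the pair $(\gamma_t,\psi_t)$ is an autonomous Markov process on $(0,\infty)\times\mathbb{R}/(\pi\mathbb{Z})$. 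A direct trigonometric rearrangement of \eqref{hatQ} shows that $\hat{Q}$ depends only on these variables,
\[
\hat{Q} = \alpha - 2a\gamma^2 + \gamma^2\left(b\sin 2\psi - a\cos 2\psi\right),
\]
so the left-hand side of \eqref{ergodic2} equals $\left| P_t\hat{Q}(\gamma_0,\psi_0) - \lambda\right|$, where $P_t$ is the Markov semigroup of $(\gamma_t,\psi_t)$ and $\psi_0=\xi_0-\phi_0$.

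The second step is to establish geometric ergodicity of $(\gamma_t,\psi_t)$ via Harris' theorem (Meyn--Tweedie geometric ergodicity), which requires two ingredients. First, a Foster--Lyapunov drift inequality $\mathcal{L}\Gamma \leq -c\,\Gamma + K\mathbf{1}_{C}$ for the generator $\mathcal{L}$ of $(\gamma_t,\psi_t)$, with constants $c>0$, $K<\infty$, a compact set $C$, and $\Gamma$ the function in \eqref{Lyapunov function}; here the strong radial dissipation created by the cubic term $-a\|X\|^2X$ dominates the growth of $\gamma^{2p}$ and of the coupled term $\gamma^2\psi^{2p}$ for large $\gamma$, while the constant $M\geq1$ keeps $\Gamma\geq1$. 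Second, a uniform minorization (local Doeblin) condition on $C$: projecting out $\phi$ endows the reduced process with independent Brownian forcing in both $\gamma$ and $\psi$, so it is non-degenerate (elliptic) on $\gamma>0$—and in any case hypoelliptic, since Hörmander's condition holds as noted in the proof of Proposition \ref{Prop:SRB}—hence strong Feller and irreducible, giving transition densities bounded below on compacta. Combining these yields a unique invariant measure $\nu$ and constants $\kappa\in(0,\infty)$, $\iota\in(0,1)$ with $\left| P_tf(\gamma_0,\psi_0) - \nu(f)\right| \leq \kappa\iota^t\Gamma(\gamma_0,\psi_0)$ for every $f$ with $|f|\leq C\Gamma$. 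Since the displayed form of $\hat{Q}$ grows only like $\gamma^2$, one checks $|\hat{Q}|\leq C\Gamma$ (using $p\geq2$ and $M\geq1$), so the estimate applies to $f=\hat{Q}$.

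The third step identifies $\nu(\hat{Q})=\lambda$. Integrating \eqref{logU} gives $\log\|U_t\| = \log\|U_0\| + \int_0^t \hat{Q}(X_s,\xi_s)\,\rmd s$; taking $\mathbb{E}^{X_0,\xi_0}$, dividing by $t$, and letting $t\to\infty$, the definition \eqref{lyp1} forces the Cesàro averages $\tfrac1t\int_0^t\mathbb{E}^{X_0,\xi_0}\hat{Q}\,\rmd s$ to converge to $\lambda$, while the ergodicity of the previous step shows the same averages converge to $\nu(\hat{Q})$; hence $\nu(\hat{Q})=\lambda$. Substituting this into the bound of the second step gives exactly \eqref{ergodic2}.

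The main obstacle will be the drift inequality of the second step for the mixed Lyapunov function $\gamma^{2p}+\gamma^2\psi^{2p}+M$: one must balance the radial contraction against the cross term $\gamma^2\psi^{2p}$, whose generator couples the (partly deterministic, $\gamma$-dependent) angular dynamics with the radial diffusion, and verify that the angular contribution cannot overwhelm the dissipation as $\gamma\to\infty$. The second delicate point is the minorization in the presence of the coordinate singularity at $\gamma=0$, where the effective angular diffusivity degenerates; this is handled by the same Hörmander/support-theorem argument already invoked for the SRB property, now restricted to compact sets.
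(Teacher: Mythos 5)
Your proposal matches the paper's proof essentially step for step: the paper likewise reduces to the autonomous pair $(\gamma_t,\psi_t)$, verifies the Foster--Lyapunov drift bound $\mathcal{L}\Gamma \leq -a\Gamma + C$ for the same function \eqref{Lyapunov function} together with a minorization condition from the non-degenerate noise driving $(\gamma_t,\psi_t)$ (invoking \cite{Mattingly2002}, i.e.\ the same Harris-type machinery you cite), and then identifies $\lambda=\nu(Q)$ via the Furstenberg--Khasminskii formula before applying the geometric ergodicity bound to $f=Q$ with $|Q|\leq\Gamma$. The one small wrinkle is your identification step: the paper applies Birkhoff's almost-sure ergodic theorem to $\frac{1}{t}\int_{0}^{t} Q(\gamma_s,\psi_s)\,\rmd s$ (checking $\gamma^2\in L^1(\tilde{\nu})$) and never takes expectations, so it avoids the uniform integrability that your Cesàro-in-expectation argument implicitly requires when passing from the almost-sure limit \eqref{lyp1} to convergence of $\frac{1}{t}\int_{0}^{t}\mathbb{E}^{X_0,\xi_0}[\hat{Q}]\,\rmd s$ --- though that integrability is in fact supplied by the moment bounds from the same Lyapunov function, so your route closes as well.
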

\begin{proof}
	Recall that $X_t = \gamma_t (\cos\phi_t,\sin\phi_t)^{\top}$. 
	Then It\^o's formula applied to \eqref{compact SDE} gives 
	\begin{equation}\label{gamma&phi}
		\begin{cases}
			\rmd \gamma_t = \left(  \alpha\gamma_t-a\gamma_t^3+\frac{\sigma^2}{2\gamma_t}  \right) \rmd t + \sigma\rmd W_t^\gamma, \\
		\rmd \phi_t = (\beta+b\gamma_t^2) \rmd t + \frac{\sigma}{\gamma_t} \rmd W_t^\phi, 
		\end{cases}
	\end{equation}
	where $W_t^\gamma$ and $W_t^\phi$ are two independent standard Brownian motions. 
	Furthermore, $\xi_t \in \mathbb{R}/(\pi\mathbb{Z})$ satisfies 
	\begin{equation}\label{xi}
		\rmd \xi_t 
		= \left( \beta+2b\gamma_t^2 + \gamma_t^2 \left( a\sin(2\xi_t-2\phi_t)+b\cos(2\xi_t-2\phi_t) \right)  \right) \rmd t. 
	\end{equation}
	By defining $\psi_t := \xi_t - \phi_t$, it follows from \eqref{gamma&phi} and \eqref{xi} that 
	\begin{equation*}
			\rmd \psi_t =  2\gamma_t^2 \cos\psi_t \left(a\sin\psi_t+b\cos\psi_t\right)  \rmd t - \frac{\sigma}{\gamma_t} \rmd W_t^\phi.
	\end{equation*}
Integrating \eqref{logU} from $0$ to $t$ yields 
\begin{equation}\label{lyp2}
	\frac{1}{t} \log \|U_t\| = \frac{1}{t} \log \|U_0\| + \frac{1}{t} \int_{0}^{t} \hat{Q}(X_s,\xi_s) \rmd s
	= \frac{1}{t} \log \|U_0\| + \frac{1}{t} \int_{0}^{t} Q(\gamma_s, \psi_s) \rmd s, 
\end{equation} 
where the function $Q:\mathbb{R}^{+}\times\mathbb{R}/(\pi\mathbb{Z})\rightarrow\mathbb{R}$, in view of \eqref{hatQ}, satisfies 
\begin{equation}\label{hatQ and Q}
	\begin{aligned}
		\hat{Q}(X_t,\xi_t) &= \alpha - 2a \|X_t\|^2 - (2b\cos2\xi_t+2a\sin2\xi_t) x_t y_t + (b\sin2\xi_t-a\cos2\xi_t) (x_t^2-y_t^2) \\
		&= \alpha - 2a\gamma_t^2 + \gamma_t^2(b\sin2\psi_t-a\cos2\psi_t) =: Q(\gamma_t, \psi_t). 
	\end{aligned}
\end{equation}

We next prove the ergodicity of the process $\{(\gamma_t,\psi_t): t\geq0\}$. 
Following \cite{Mattingly2002}, it suffices to verify the Lyapunov condition and the minorization condition.
Denote by $\mathcal{L}$ the corresponding generator, i.e.,  
\begin{align*}
	\mathcal{L} = \left(  \alpha\gamma-a\gamma^3+\frac{\sigma^2}{2\gamma}  \right) \frac{\partial}{\partial \gamma} + \frac{\sigma^2}{2} \frac{\partial^2}{\partial \gamma^2} + 2\gamma^2 \cos\psi (a\sin\psi+b\cos\psi) \frac{\partial}{\partial \psi} + \frac{\sigma^2}{2\gamma^2} \frac{\partial^2}{\partial \psi^2}. 
\end{align*}
For the Lyapunov function $\Gamma$ defined by \eqref{Lyapunov function}, it is evident that  $\lim_{(\gamma, \psi)\rightarrow \infty} \Gamma(\gamma, \psi) = \infty$. 
By Young's inequality and $|\psi|\leq\pi$, we have 
\begin{align*}
	\mathcal{L}\Gamma(\gamma, \psi) 
	&= -2pa\gamma^{2p+2} + 2p\alpha\gamma^{2p} + 2p^2\sigma^2\gamma^{2p-2} 
	+ 4p\gamma^{4}\psi^{2p-1}\cos\psi (a\sin\psi+b\cos\psi)  \\
	&\quad - 2a\gamma^{4}\psi^{2p} + 2\alpha\gamma^{2}\psi^{2p} + 2\sigma^2 \psi^{2p} + p(2p-1)\sigma^2\psi^{2p-2} \\
	&\leq -a\Gamma(\gamma, \psi) + C(M, p, a, b, \alpha,  \sigma). 
\end{align*}
This implies that the Lyapunov condition holds for the process $\{(\gamma_t,\psi_t): t\geq0\}$. 
On the other hand, the driven noise of the process $\{(\gamma_t,\psi_t): t\geq0\}$ is non-degenerate, which implies the minorization condition. 
Consequently, 
%by using a result of Theorem 2.5 in \cite{Mattingly2002}, 
there exists a unique stationary measure $\nu$ for $\{(\gamma_t,\psi_t): t\geq0\}$ with the marginal $\tilde{\nu}$ for $\{\gamma_t: t\geq0\}$. 
Additionally, there exist $\iota\in(0,1)$ and $\kappa\in(0,\infty)$, such that for all measurable functions $f$ satisfying $|f|\leq\Gamma$, 
\begin{equation}\label{ergodic1}
	\left| \mathbb{E}^{\gamma_{0},\psi_{0}}[f(\gamma_t,\psi_t)] - \nu(f) \right| \leq \kappa \iota^t \Gamma(\gamma_{0},\psi_{0}). 
\end{equation}

Next we show that $\lambda=\nu(Q)$ when $f$ is taken to be the function $Q$, where $Q$ is given by \eqref{hatQ and Q}. 
Since $\gamma^2$ belongs to $L^1(\tilde{\nu})$ (see  \cite[Appendix A]{Baxendale2024}), we have $Q(\gamma,\psi)\in L^1(\nu)$. By \eqref{lyp1}, \eqref{lyp2}, and Birkhoff's almost-sure ergodic theorem, we obtain the Furstenberg--Khasminskii formula 
	\begin{equation}\label{FK formula}
		\lambda = \lim\limits_{t \rightarrow \infty} \frac{1}{t} \log \|U_t\| = \lim\limits_{t \rightarrow \infty} \frac{1}{t} \int_{0}^{t} Q(\gamma_s, \psi_s) \rmd s
		= \int_{\mathbb{R}^+ \times \mathbb{R}/(\pi\mathbb{Z})} Q(\gamma,\psi) \nu(\rmd \gamma,\rmd \psi) =: \nu(Q). 
	\end{equation}
	Furthermore, since $|Q|\leq\Gamma$, it follows from \eqref{hatQ and Q}, \eqref{ergodic1}, and \eqref{FK formula} that 
	\begin{equation*}
		\left| \mathbb{E}^{X_{0},\xi_{0}}[\hat{Q}(X_t,\xi_t)] - \lambda \right| 
		= \left| \mathbb{E}^{\gamma_{0},\psi_{0}}[Q(\gamma_t,\psi_t)] - \nu(Q) \right| \leq \kappa \iota^t \Gamma(\gamma_{0},\psi_{0}), 
	\end{equation*}
	which completes the proof. 
\end{proof}

\section{Random attractor and SRB measure for the discrete RDS}\label{Sec:discrete RDS}
In this section we adopt the backward Euler method to discretize the original SDE \eqref{compact SDE}. 
We demonstrate that the numerical approximation can induce a discrete RDS,  and moreover, establish the existence of a random attractor and an SRB measure for this system.

Given a step size $\tau>0$, the backward Euler method applied to SDE \eqref{compact SDE} computes approximations $X^{\tau}_k \approx 
X_{t_k}$, where $t_{k}=k\tau$, by forming $X^{\tau}_{0}=X_{t_{0}}$ and 
%\begin{equation}\label{BEM}
%	X^{\tau}_{k+1}  = X^{\tau}_{k} + \tau \left( \begin{bmatrix}
%		\alpha & -\beta \\[.2em] \beta & \alpha 
%	\end{bmatrix} X^{\tau}_{k+1} 
%	+ \|X^{\tau}_{k+1}\|^2 \begin{bmatrix}
%		-a & -b \\[.2em] b & -a 
%	\end{bmatrix} X^{\tau}_{k+1} \right)  + \sigma \Delta W_{k+1}, \quad \text{for} \ k\geq0. 
%\end{equation}
\begin{equation}\label{BEM}
	X^{\tau}_{k+1} = X^{\tau}_{k} + \tau F(X^{\tau}_{k+1}) + \sigma \Delta W_{k+1}, \qquad \text{for} \ k\geq0. 
\end{equation}
Here $F$ is given by \eqref{nonlinearity} and $\Delta W_{k+1} = W_{t_{k+1}}-W_{t_{k}}$. 
By Appendix \ref{Appendix:solvable of IES}, it follows that when $\tau<\min\{\frac{1}{1+4|\alpha|},\frac{a}{1+|a\alpha|+|b\beta|}\}$, the implicit equation \eqref{BEM} can be uniquely solved for any $k\geq0$, i.e., there exists a function $F_{\tau}:\mathbb{R}^2\rightarrow\mathbb{R}^2$ such that 
\begin{equation}\label{BEM2}
	X^{\tau}_{k+1} = F_{\tau} \left(  X^{\tau}_{k} + \sigma \Delta W_{k+1}  \right), \qquad \text{for} \ k\geq0. 
\end{equation}

\subsection{Discrete RDS generated by the numerical approximation}
Define the mapping $\varphi_{\tau}: \mathbb{Z} \times \Omega \times \mathbb{R}^2 \rightarrow \mathbb{R}^2$ as 
\begin{equation*}
	\varphi_{\tau}(k, \omega, \bm{x}) := F_{\tau} \left( \cdot + \sigma \Delta W_{k} \right) \circ  F_{\tau} \left( \cdot + \sigma \Delta W_{k-1} \right) \circ \cdots \circ  F_{\tau} \left( \bm{x} + \sigma \Delta W_{1} \right) 
\end{equation*}
for $k\geq2$ with 
\begin{align*}
	\varphi_{\tau}(0, \omega, \bm{x}) := \bm{x}, \quad \varphi_{\tau}(1, \omega, \bm{x}) := F_{\tau} \left( \bm{x} + \sigma \Delta W_{1} \right). 
\end{align*}
We show later in Theorem \ref{Thm:discrete RDS} that $\varphi_{\tau}$ forms a discrete RDS by demonstrating its conjugacy relation. 
To this end, we introduce  
\begin{equation*}
	\tilde{Z}^*_{k}(\omega) = \int_{t_{k-1}}^{t_{k}} Z^*(\theta_s \omega) \rmd s, 
\end{equation*}
where $Z^*(\theta_s \omega)$ is given by \eqref{OU2}. It can be verified that 
\begin{equation}\label{OU3}
	Z^*(\theta_{t_{k+1}} \omega) - Z^*(\theta_{t_{k}} \omega) = - \gamma \tilde{Z}^*_{k+1}(\omega) + \Delta W_{k+1}. 
\end{equation}
Then defining the transformation $\hat{X}^{\tau}_{k} := X^{\tau}_{k}-\sigma Z^*(\theta_{t_{k}} \omega)$, we can conjugate \eqref{BEM} to a discrete random differential equation 
\begin{equation}\label{RDE2}
	\hat{X}^{\tau}_{k+1} =  \hat{X}^{\tau}_{k} + \tau F\left( \hat{X}^{\tau}_{k+1}+\sigma Z^*(\theta_{t_{k+1}}\omega) \right) + \gamma\sigma \tilde{Z}^*_{k+1}(\omega). 
\end{equation}
It follows from \eqref{BEM2} and \eqref{OU3} that 
\begin{align*}
	&\hat{X}^{\tau}_{k+1} = X^{\tau}_{k+1}-\sigma Z^*(\theta_{t_{k+1}} \omega)
	= F_{\tau} \left(  X^{\tau}_{k} + \sigma \Delta W_{k+1}  \right) - \sigma Z^*(\theta_{t_{k+1}}\omega) \\
	&= F_{\tau} \left( \hat{X}^{\tau}_{k} + \sigma Z^*(\theta_{t_{k}} \omega) + \sigma\Delta W_{k+1} \right) - \sigma Z^*(\theta_{t_{k+1}}\omega) \\
	&= F_{\tau} \left( \hat{X}^{\tau}_{k} + \sigma Z^*(\theta_{t_{k+1}}\omega) + \gamma\sigma\tilde{Z}^*_{k+1}(\omega) \right) - \sigma Z^*(\theta_{t_{k+1}}\omega). 
\end{align*}
This induces a mapping $\hat{\varphi}_{\tau}: \mathbb{Z} \times \Omega \times \mathbb{R}^2 \rightarrow \mathbb{R}^2$ defined by
\begin{equation}\label{varphitau}
	\begin{aligned}
		\hat{\varphi}_{\tau}(k, \omega, \bm{y}) &:= \left( F_\tau\left( \cdot + \sigma Z^*(\theta_{t_{k}}\omega)+ \gamma\sigma \tilde{Z}^*_{k}(\omega) \right) - \sigma Z^*(\theta_{t_{k}}\omega) \right)
		\circ \cdots \\
		&\qquad \circ \left( F_\tau\left( \bm{y} + \sigma Z^*(\theta_{t_{1}}\omega)+ \gamma\sigma \tilde{Z}^*_{1}(\omega) \right) - \sigma Z^*(\theta_{t_{1}}\omega) \right)
	\end{aligned}
\end{equation}
for $k\geq2$ with 
\begin{equation}\label{varphitau01}
	\hat{\varphi}_{\tau}(0, \omega, \bm{y}) := \bm{y}, \quad \hat{\varphi}_{\tau}(1, \omega, \bm{y}) := F_\tau\left( \bm{y} + \sigma Z^*(\theta_{t_{1}}\omega)+ \gamma\sigma \tilde{Z}^*_{1}(\omega) \right) - \sigma Z^*(\theta_{t_{1}}\omega). 
\end{equation}
The following theorem describes the generation of a discrete RDS. 
\begin{thm}\label{Thm:discrete RDS}
	Suppose that $\tau<\min\{\frac{1}{1+4|\alpha|},\frac{a}{1+|a\alpha|+|b\beta|}\}$. Then the following conjugacy relation holds for any $k\geq0$ and $\omega\in\Omega$:   
	\begin{equation}\label{hatphi and phi}
		\varphi_{\tau}(k, \omega, \bm{x}) = \mathcal{T}^{-1}(\theta_{t_k}\omega, \cdot) \circ \hat{\varphi}_{\tau}(k, \omega, \cdot) \circ \mathcal{T}(\omega,\bm{x}), 
	\end{equation}
	where $\mathcal{T}$ is defined by \eqref{mathcalT}. 
	In addition, the mapping $\varphi_{\tau}$ induced by \eqref{BEM} forms a discrete RDS over $(\Omega,\mathcal{F},\mathbb{P},\theta)$. 
\end{thm}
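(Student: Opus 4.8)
The plan is to prove the two assertions in turn, establishing the conjugacy relation \eqref{hatphi and phi} first by induction on $k$ and then deducing that $\varphi_\tau$ is a discrete RDS. Throughout I would invoke the step-size hypothesis $\tau<\min\{\frac{1}{1+4|\alpha|},\frac{a}{1+|a\alpha|+|b\beta|}\}$, which by Appendix \ref{Appendix:solvable of IES} guarantees that $F_\tau$ is a well-defined and continuous map, so that both $\varphi_\tau$ and $\hat{\varphi}_\tau$ are meaningful and the one-step recursions used below are legitimate.

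For the conjugacy relation, the base case $k=0$ is immediate: since $t_0=0$ we have $\theta_{t_0}=\mathrm{id}_\Omega$ and $\hat{\varphi}_\tau(0,\omega,\cdot)=\mathrm{id}_{\mathbb{R}^2}$, so the right-hand side reduces to $\mathcal{T}^{-1}(\omega,\mathcal{T}(\omega,\bm{x}))=\bm{x}=\varphi_\tau(0,\omega,\bm{x})$, using that $\mathcal{T}(\omega,\cdot)$ and $\mathcal{T}^{-1}(\omega,\cdot)$ from \eqref{mathcalT} are mutual inverses. For the inductive step I would write both objects in one-step recursive form, namely $\varphi_\tau(k+1,\omega,\bm{x})=F_\tau(\varphi_\tau(k,\omega,\bm{x})+\sigma\Delta W_{k+1})$ and $\hat{\varphi}_\tau(k+1,\omega,\bm{y})=F_\tau(\hat{\varphi}_\tau(k,\omega,\bm{y})+\sigma Z^*(\theta_{t_{k+1}}\omega)+\gamma\sigma\tilde{Z}^*_{k+1}(\omega))-\sigma Z^*(\theta_{t_{k+1}}\omega)$. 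Substituting the induction hypothesis $\varphi_\tau(k,\omega,\bm{x})=\hat{\varphi}_\tau(k,\omega,\bm{y})+\sigma Z^*(\theta_{t_k}\omega)$ with $\bm{y}=\mathcal{T}(\omega,\bm{x})$ into the first recursion, the argument of $F_\tau$ becomes $\hat{\varphi}_\tau(k,\omega,\bm{y})+\sigma Z^*(\theta_{t_k}\omega)+\sigma\Delta W_{k+1}$. The decisive step is to apply identity \eqref{OU3} in the form $\Delta W_{k+1}=Z^*(\theta_{t_{k+1}}\omega)-Z^*(\theta_{t_k}\omega)+\gamma\tilde{Z}^*_{k+1}(\omega)$, which collapses this argument to exactly $\hat{\varphi}_\tau(k,\omega,\bm{y})+\sigma Z^*(\theta_{t_{k+1}}\omega)+\gamma\sigma\tilde{Z}^*_{k+1}(\omega)$; comparing with the recursion for $\hat{\varphi}_\tau$ then yields $\varphi_\tau(k+1,\omega,\bm{x})=\hat{\varphi}_\tau(k+1,\omega,\bm{y})+\sigma Z^*(\theta_{t_{k+1}}\omega)=\mathcal{T}^{-1}(\theta_{t_{k+1}}\omega,\hat{\varphi}_\tau(k+1,\omega,\mathcal{T}(\omega,\bm{x})))$, closing the induction.

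For the discrete RDS property, measurability of $\varphi_\tau$ in $(\omega,\bm{x})$ and continuity of $\bm{x}\mapsto\varphi_\tau(k,\omega,\bm{x})$ follow from the regularity of $F_\tau$ and of $\omega\mapsto\Delta W_k(\omega)$, together with the fact that finite compositions preserve continuity and measurability; the identity $\varphi_\tau(0,\omega,\cdot)=\mathrm{id}_{\mathbb{R}^2}$ holds by definition. The core is the cocycle property over the discrete metric dynamical system $(\Omega,\mathcal{F},\mathbb{P},\{\theta_{t_k}\}_{k\in\mathbb{Z}})$, that is, $\varphi_\tau(k+j,\omega,\cdot)=\varphi_\tau(k,\theta_{t_j}\omega,\cdot)\circ\varphi_\tau(j,\omega,\cdot)$. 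I would establish this directly from the defining composition by exploiting the shift behaviour of the increments: since $\Delta W_i(\omega)=\omega(t_i)-\omega(t_{i-1})$ and $(\theta_{t_j}\omega)(t)=\omega(t+t_j)-\omega(t_j)$, a short computation gives $\Delta W_i(\theta_{t_j}\omega)=\Delta W_{i+j}(\omega)$. Hence $\varphi_\tau(k,\theta_{t_j}\omega,\cdot)$ is the composition of $F_\tau(\cdot+\sigma\Delta W_{j+1}),\dots,F_\tau(\cdot+\sigma\Delta W_{k+j})$, which concatenates with $\varphi_\tau(j,\omega,\cdot)$ to reproduce precisely $\varphi_\tau(k+j,\omega,\cdot)$. (Alternatively, one may verify the cocycle identity for $\hat{\varphi}_\tau$ and transfer it through \eqref{hatphi and phi}, but the direct route is self-contained.)

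The main obstacle I anticipate is organizational rather than conceptual: keeping consistent track of the two distinct shifted noise arguments $\theta_{t_k}\omega$ and $\theta_{t_{k+1}}\omega$ across the induction, and checking that identity \eqref{OU3} correctly aligns the plain increment $\sigma\Delta W_{k+1}$ driving $\varphi_\tau$ with the Ornstein--Uhlenbeck contribution $\gamma\sigma\tilde{Z}^*_{k+1}(\omega)$ plus the difference of $Z^*$ values driving $\hat{\varphi}_\tau$. Once this bookkeeping is handled, both the conjugacy relation and, via the noise-shift identity, the cocycle property follow with no further analytic input.
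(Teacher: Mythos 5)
Your proposal is correct, and its inductive core for the conjugacy relation coincides with the paper's: both reduce the step $k\mapsto k+1$ to the one-step map $\bm{z}\mapsto F_\tau\bigl(\bm{z}+\sigma Z^*(\theta_{t_{k+1}}\omega)+\gamma\sigma\tilde{Z}^*_{k+1}(\omega)\bigr)-\sigma Z^*(\theta_{t_{k+1}}\omega)$ and use \eqref{OU3} to recombine $\sigma Z^*(\theta_{t_k}\omega)+\sigma\Delta W_{k+1}$ into $\sigma Z^*(\theta_{t_{k+1}}\omega)+\gamma\sigma\tilde{Z}^*_{k+1}(\omega)$. Where you genuinely diverge is in the logical architecture around that induction. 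The paper first proves the cocycle property of $\hat{\varphi}_\tau$ (via the shift identity $\tilde{Z}^*_{k+l}(\omega)=\tilde{Z}^*_{k}(\theta_{t_l}\omega)$), uses it to factor $\hat{\varphi}_\tau(m+1,\omega,\cdot)=\hat{\varphi}_\tau(1,\theta_{t_m}\omega,\cdot)\circ\hat{\varphi}_\tau(m,\omega,\cdot)$ inside the induction, and only then obtains the cocycle property of $\varphi_\tau$ by transferring through \eqref{hatphi and phi} with $\mathcal{T}(\theta_{t_l}\omega,\cdot)\circ\mathcal{T}^{-1}(\theta_{t_l}\omega,\cdot)=\mathrm{id}$. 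You instead read the one-step recursion for $\hat{\varphi}_\tau$ directly off its definition \eqref{varphitau} (which is legitimate: the outermost factor of the composition at level $k+1$ is exactly that map, so no cocycle property is needed there), and you prove the cocycle identity for $\varphi_\tau$ directly from the increment-shift relation $\Delta W_i(\theta_{t_j}\omega)=\Delta W_{i+j}(\omega)$, which indeed follows from $(\theta_{t_j}\omega)(t)=\omega(t+t_j)-\omega(t_j)$ and makes the composition $\varphi_\tau(k,\theta_{t_j}\omega,\cdot)\circ\varphi_\tau(j,\omega,\cdot)$ concatenate into $\varphi_\tau(k+j,\omega,\cdot)$ by pure bookkeeping. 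Your route is more elementary and self-contained — the cocycle property of $\hat{\varphi}_\tau$ is never needed — while the paper's route yields that property as a standalone byproduct, which is convenient since Section \ref{Sec:discrete RDS} subsequently works with the transformed system $\hat{X}^\tau_k$ and $\hat{\varphi}_\tau$ (e.g., in the absorbing-set estimate of Theorem \ref{Thm:discrete attractor}); if you follow your route you would want to record separately that $\hat{\varphi}_\tau$ is also a cocycle, which in your framework follows at once by transferring your cocycle identity for $\varphi_\tau$ back through \eqref{hatphi and phi}. Your brief remarks on measurability and continuity (finite compositions of the continuous $F_\tau$, guaranteed under the stated step-size restriction by Appendix \ref{Appendix:solvable of IES}, with measurable noise arguments) supply a detail the paper leaves implicit, and are correct.
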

\begin{proof}
	We first prove the cocycle property of the mapping $\hat{\varphi}_{\tau}$. By observing 
	\begin{equation*}
		\tilde{Z}^*_{k+l}(\omega) = 	\int_{t_{k+l-1}}^{t_{k+l}} Z^*(\theta_s \omega) \rmd s
		= \int_{t_{k-1}}^{t_{k}} Z^*(\theta_{s+t_l} \omega) \rmd s
		= \tilde{Z}^*_{k}(\theta_{t_{l}}\omega), 
	\end{equation*}
	it can be verified that 
	\begin{align*}
		&\hat{\varphi}_{\tau}(k+l, \omega, \bm{y}) \\
		= \ & \left( F_\tau\left( \cdot + \sigma Z^*(\theta_{t_{k+l}}\omega)+ \gamma\sigma \tilde{Z}^*_{k+l}(\omega) \right) - \sigma Z^*(\theta_{t_{k+l}}\omega) \right)
		\circ \cdots \\
		& \circ \left( F_\tau\left( \cdot + \sigma Z^*(\theta_{t_{l+1}}\omega)+ \gamma\sigma \tilde{Z}^*_{l+1}(\omega) \right) - \sigma Z^*(\theta_{t_{l+1}}\omega) \right)   \\ 
		& \circ \left( F_\tau\left( \cdot + \sigma Z^*(\theta_{t_{l}}\omega)+ \gamma\sigma \tilde{Z}^*_{l}(\omega) \right) - \sigma Z^*(\theta_{t_{l}}\omega) \right) \circ \cdots  \\ 
		& \circ \left( F_\tau\left( \bm{y} + \sigma Z^*(\theta_{t_{1}}\omega)+ \gamma\sigma \tilde{Z}^*_{1}(\omega) \right) - \sigma Z^*(\theta_{t_{1}}\omega) \right) \\
		= \ & \left( F_\tau\left( \cdot + \sigma Z^*(\theta_{t_{k}}\theta_{t_{l}}\omega)+ \gamma\sigma \tilde{Z}^*_{k}(\theta_{t_{l}}\omega) \right) - \sigma Z^*(\theta_{t_{k}}\theta_{t_{l}}\omega) \right)
		\circ \cdots \\
		& \circ \left( F_\tau\left( \cdot + \sigma Z^*(\theta_{t_{1}}\theta_{t_{l}}\omega)+ \gamma\sigma \tilde{Z}^*_{1}(\theta_{t_{l}}\omega) \right) - \sigma Z^*(\theta_{t_{1}}\theta_{t_{l}}\omega) \right) \circ \hat{\varphi}_{\tau}(l, \omega, \bm{y})   \\ 
		= \ & \hat{\varphi}_{\tau}(k, \theta_{t_{l}}\omega, \cdot)  \circ \hat{\varphi}_{\tau}(l, \omega, \bm{y}), 
	\end{align*}
	which means that $\hat{\varphi}_{\tau}$ is a cocycle. 
	
	Next we prove \eqref{hatphi and phi} by induction on $k$. By the definition of $\mathcal{T}$, we have 
	\begin{equation}\label{conjugacy1}
		\mathcal{T}^{-1}(\theta_{t_k}\omega, \cdot) \circ \hat{\varphi}_{\tau}(k, \omega, \cdot) \circ \mathcal{T}(\omega,\bm{x}) =  \hat{\varphi}_{\tau}(k, \omega, \bm{x} - \sigma Z^*(\omega)) + \sigma Z^*(\theta_{t_k}\omega), \quad \forall \, k\geq0. 
	\end{equation}
	When $k=0$, \eqref{hatphi and phi} obviously holds. 
	We assume that \eqref{hatphi and phi} holds for $k=m$, i.e., 
	\begin{equation*}
		\varphi_{\tau}(m, \omega, \bm{x}) = \hat{\varphi}_{\tau}(m, \omega, \bm{x}-\sigma Z^*(\omega)) + \sigma Z^*(\theta_{t_m}\omega).  
	\end{equation*}
	Then for $k=m+1$, by the cocycle property of $\hat{\varphi}_{\tau}$, \eqref{conjugacy1}, \eqref{varphitau01}, and \eqref{OU3}, 
	\begin{align*}
		&\mathcal{T}^{-1}(\theta_{t_{m+1}}\omega, \cdot) \circ \hat{\varphi}_{\tau}(m+1, \omega, \cdot) \circ \mathcal{T}(\omega,\bm{x}) \\
		= \ &\hat{\varphi}_{\tau}(m+1, \omega, \bm{x} - \sigma Z^*(\omega)) + \sigma Z^*(\theta_{t_{m+1}}\omega) \\
		= \ &\hat{\varphi}_{\tau}(1, \theta_{t_m}\omega, \cdot) \circ \hat{\varphi}_{\tau}(m, \omega, \bm{x} - \sigma Z^*(\omega)) + \sigma Z^*(\theta_{t_{m+1}}\omega) \\
		= \ &\hat{\varphi}_{\tau}(1, \theta_{t_m}\omega, \cdot) \circ \left(\varphi_{\tau}(m, \omega, \bm{x})-\sigma Z^*(\theta_{t_{m}}\omega)\right) + \sigma Z^*(\theta_{t_{m+1}}\omega) \\
		= \ &F_\tau\left( \varphi_{\tau}(m, \omega, \bm{x})-\sigma Z^*(\theta_{t_{m}}\omega) + \sigma Z^*(\theta_{t_{m+1}}\omega)+ \gamma\sigma \tilde{Z}^*_{m+1}(\omega) \right) \\
		& - \sigma Z^*(\theta_{t_{1}}\theta_{t_{m}}\omega) + \sigma Z^*(\theta_{t_{m+1}}\omega)   \\
		= \ &F_\tau\left( \varphi_{\tau}(m, \omega, \bm{x}) + \sigma\Delta W_{m+1} \right) \\
		= \ &F_\tau\left( \cdot + \sigma\Delta W_{m+1} \right) \circ  \varphi_{\tau}(m, \omega, \bm{x})
		= \varphi_{\tau}(m+1, \omega, \bm{x}), 
	\end{align*}
	which means that \eqref{hatphi and phi} holds for $k=m+1$. This verifies \eqref{hatphi and phi} for all $k\geq0$. 
	
	Finally, it follows from \eqref{hatphi and phi}, the cocycle property of $\hat{\varphi}_{\tau}$, and $\mathcal{T}(\omega,\cdot)\circ \mathcal{T}^{-1}(\omega,\cdot) = \text{id}_{\mathbb{R}^2}(\cdot)$ for all $\omega\in\Omega$ that 
	\begin{align*}
		&\varphi_{\tau}(k+l, \omega, \bm{x}) 
		= \mathcal{T}^{-1}(\theta_{t_{k+l}}\omega, \cdot) \circ \hat{\varphi}_{\tau}(k+l, \omega, \cdot) \circ \mathcal{T}(\omega,\bm{x}) \\
		&= \mathcal{T}^{-1}(\theta_{t_{k+l}}\omega, \cdot) \circ \hat{\varphi}_{\tau}(k, \theta_{t_l}\omega, \cdot) \circ \left( \mathcal{T}(\theta_{t_{l}}\omega, \cdot) \circ \mathcal{T}^{-1}(\theta_{t_{l}}\omega, \cdot) \right) \circ \hat{\varphi}_{\tau}(l, \omega, \cdot) \circ \mathcal{T}(\omega,\bm{x}) \\ 
		&= \varphi_{\tau}(k, \theta_{t_l}\omega, \cdot) \circ \varphi_{\tau}(l, \omega, \bm{x}), 
	\end{align*}
	which means that $\varphi_{\tau}$ is a cocycle. Thus $\varphi_{\tau}$ induced by \eqref{BEM} forms a discrete RDS. 
\end{proof}

\subsection{Random attractor for the discrete RDS}
In this subsection we establish the existence of a random attractor for $\varphi_{\tau}$ via a pathwise analysis of the \textit{a priori} estimate of $\hat{X}^{\tau}_{k}$ determined by \eqref{RDE2}. 
\begin{thm}\label{Thm:discrete attractor}
	Under conditions of Theorem \ref{Thm:discrete RDS}, there exists a random attractor $A^{\tau}(\omega)$ for the discrete RDS $\varphi_{\tau}$, i.e., for every bounded deterministic set $B\subset \mathbb{R}^{2}$, 
	\begin{equation*}
		\lim\limits_{k\rightarrow\infty} d\left(\varphi_{\tau}(k,\theta_{-t_{k}}\omega,B) , A^{\tau}(\omega)\right) = 0, \quad \text{for all} \ \omega \in \Omega. 
	\end{equation*} 
\end{thm}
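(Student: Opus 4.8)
The plan is to mirror the construction of the random absorbing set carried out for the continuous RDS in Proposition \ref{Prop:attractor}, working throughout with the conjugated variable $\hat X^\tau_k$ governed by the discrete random differential equation \eqref{RDE2}. The starting point is the dissipativity of the nonlinearity: a direct computation from \eqref{nonlinearity} shows that the skew-symmetric rotation and shear contributions cancel in the inner product, so that $\bm x\cdot F(\bm x)=\alpha\|\bm x\|^2-a\|\bm x\|^4$ for every $\bm x\in\mathbb R^2$. This quartic dissipation is what drives the contraction.

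First I would derive a one-step estimate. Writing $g_{k+1}:=\sigma Z^*(\theta_{t_{k+1}}\omega)$ and $h_{k+1}:=\gamma\sigma\tilde Z^*_{k+1}(\omega)$ and taking the inner product of \eqref{RDE2} with $\hat X^\tau_{k+1}$, the implicit (backward Euler) structure lets me use $\hat X^\tau_k\cdot\hat X^\tau_{k+1}\le\frac12\|\hat X^\tau_k\|^2+\frac12\|\hat X^\tau_{k+1}\|^2$ to obtain
$$\frac12\|\hat X^\tau_{k+1}\|^2\le\frac12\|\hat X^\tau_k\|^2+\tau\,F\!\big(\hat X^\tau_{k+1}+g_{k+1}\big)\cdot\hat X^\tau_{k+1}+h_{k+1}\cdot\hat X^\tau_{k+1}.$$
Since $F$ is evaluated at the shifted point $Y_{k+1}:=\hat X^\tau_{k+1}+g_{k+1}$, I would decompose $F(Y_{k+1})\cdot\hat X^\tau_{k+1}=F(Y_{k+1})\cdot Y_{k+1}-F(Y_{k+1})\cdot g_{k+1}$, apply the dissipativity identity to the first term, and bound the cross term by $\|F(Y_{k+1})\|\,\|g_{k+1}\|\le C(\|Y_{k+1}\|+\|Y_{k+1}\|^3)\|g_{k+1}\|$ with Young's inequality, absorbing the cubic growth into the available $-a\|Y_{k+1}\|^4$. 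Completing the square converts the residual quartic dissipation into a quadratic one, $-\tfrac a4\|Y_{k+1}\|^4\le -c\|Y_{k+1}\|^2+C$, and transferring back via $\|Y_{k+1}\|^2\ge\frac12\|\hat X^\tau_{k+1}\|^2-\|g_{k+1}\|^2$ yields $F(Y_{k+1})\cdot\hat X^\tau_{k+1}\le-\frac c2\|\hat X^\tau_{k+1}\|^2+C(1+\|g_{k+1}\|^4)$. Treating $h_{k+1}$ (which is $O(\tau)$ since $\tilde Z^*_{k+1}$ integrates $Z^*$ over an interval of length $\tau$) by a further Young split absorbed into the new dissipation, I arrive at the contraction
$$\|\hat X^\tau_{k+1}\|^2\le q\,\|\hat X^\tau_k\|^2+Cq\tau\big(1+\|g_{k+1}\|^4\big),\qquad q:=\frac{1}{1+c\tau/2}<1.$$

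Next I would iterate this recursion from $0$ to $k$ and pass to the pullback. Using the discrete conjugacy \eqref{hatphi and phi} together with \eqref{mathcalT}, I have $\varphi_\tau(k,\theta_{-t_k}\omega,\bm x)=\hat\varphi_\tau(k,\theta_{-t_k}\omega,\bm x-\sigma Z^*(\theta_{-t_k}\omega))+\sigma Z^*(\omega)$, so it suffices to control the iterate of \eqref{RDE2} driven by $\theta_{-t_k}\omega$. Under this pullback the identity $Z^*(\theta_{t_j}\theta_{-t_k}\omega)=Z^*(\theta_{-t_{k-j}}\omega)$ turns the summed noise contributions, after reindexing $r=k-j$, into $\sum_{r=0}^{k-1}q^{\,r}\,C\tau\big(1+\|\sigma Z^*(\theta_{-t_r}\omega)\|^4\big)$, while the initial term becomes $q^k\|\bm x-\sigma Z^*(\theta_{-t_k}\omega)\|^2$. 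The subexponential growth bound \eqref{Z*} gives $\|Z^*(\theta_{-t_r}\omega)\|^4\le C(1+(r\tau)^{2\delta})$ with $2\delta<2$, so the geometric factor $q^r$ makes the series converge and forces $q^k\|\bm x-\sigma Z^*(\theta_{-t_k}\omega)\|^2\to0$ uniformly for $\bm x$ in any bounded set $B$. Defining
$$\mathcal A^\tau(\omega):=1+2\sigma^2\|Z^*(\omega)\|^2+C\tau\sum_{r=0}^{\infty}q^{\,r}\big(1+\|\sigma Z^*(\theta_{-t_r}\omega)\|^4\big)<\infty$$
and $K^\tau(\omega):=\{\bm x\in\mathbb R^2:\|\bm x\|^2\le\mathcal A^\tau(\omega)\}$, the above shows that $K^\tau$ is a random absorbing set for $\varphi_\tau$. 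Since closed balls in $\mathbb R^2$ are compact, the existence of this random compact absorbing set yields, by the standard random attractor theorem (as invoked in \cite{Doan2018,Crauel1994}), the desired random attractor $A^\tau(\omega)=\bigcap_{N\ge0}\overline{\bigcup_{k\ge N}\varphi_\tau(k,\theta_{-t_k}\omega,K^\tau(\theta_{-t_k}\omega))}$.

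I expect the one-step contraction to be the main obstacle: the nonlinearity in \eqref{RDE2} is evaluated at the shifted, implicitly defined point $\hat X^\tau_{k+1}+g_{k+1}$ rather than at $\hat X^\tau_{k+1}$ itself, so the quartic dissipativity must be extracted at $Y_{k+1}$ and then transferred back to $\hat X^\tau_{k+1}$, while the cross terms $F(Y_{k+1})\cdot g_{k+1}$ and $h_{k+1}\cdot\hat X^\tau_{k+1}$ are simultaneously controlled by Young's inequality. Keeping the factor $q$ strictly below one (uniformly in $k$ for the fixed admissible $\tau$) is precisely what makes the pullback series converge; the remaining steps — iteration, reindexing, and invoking \eqref{Z*} — are routine once this estimate is in place.
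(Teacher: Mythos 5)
Your proposal is correct and follows essentially the same route as the paper: a one-step pathwise contraction for $\|\hat{X}^{\tau}_{k+1}\|^2$ derived from \eqref{RDE2} by pairing with the implicit iterate, giving a factor of the form $(1+c\tau)^{-1}$, followed by iteration, pullback via the conjugacy \eqref{hatphi and phi}, convergence of the resulting series through \eqref{Z*}, a random absorbing ball, and the abstract attractor theorem from \cite{Doan2018}. The only differences are cosmetic: you extract dissipativity via the identity $\bm{x}\cdot F(\bm{x})=\alpha\|\bm{x}\|^2-a\|\bm{x}\|^4$ at the shifted point $Y_{k+1}$ and transfer back, where the paper carries out the equivalent coordinatewise cross-term estimates, and you exploit the $O(\tau)$ size of $\tilde{Z}^*_{k+1}$ where the paper uses a Young split with exponents $(4,\tfrac{4}{3})$ producing its $\tau^{-4/3}\|\tilde{Z}^*_{k+1}\|^{4/3}$ term.
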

\begin{proof}
	We first give the \textit{a priori} estimate of $\hat{X}^{\tau}_{k}$. Recall
	$\hat{X}^{\tau}_{k} = X^{\tau}_{k}-\sigma Z^*(\theta_{t_{k}} \omega)$ and denote
	\begin{equation*}
		\hat{X}^{\tau}_{k} = \begin{bmatrix}
			\bar{x}^{\tau}_{k} \\ \bar{y}^{\tau}_{k}
		\end{bmatrix}, \quad 
		Z^*(\theta_{t_k}\omega) = Z^*_{k} = \begin{bmatrix}
			x^*_{k} \\ y^*_{k}
		\end{bmatrix}. 
	\end{equation*}
	Then from \eqref{RDE2} we obtain 
	\begin{align*}
		&\|\hat{X}^{\tau}_{k+1}\|^2 - \hat{X}^{\tau}_{k} \cdot \hat{X}^{\tau}_{k+1} \\ 
		= \ &\tau \Big(  \alpha \|\hat{X}^{\tau}_{k+1}\|^2 + \sigma x^*_{k+1}\left(\alpha\bar{x}^{\tau}_{k+1}+\beta \bar{y}^{\tau}_{k+1}\right)  
		+ \sigma y^*_{k+1}\left(\alpha\bar{y}^{\tau}_{k+1}-\beta \bar{x}^{\tau}_{k+1}\right)  \Big)
		+ \gamma\sigma \hat{X}^{\tau}_{k+1} \cdot \tilde{Z}^*_{k+1}(\omega)  \\
		&- \tau \| \hat{X}^{\tau}_{k+1}+\sigma Z^*_{k+1}\|^2 
		\left(  a\|\hat{X}^{\tau}_{k+1}\|^2 + \sigma x^*_{k+1} (a\bar{x}^{\tau}_{k+1}-b\bar{y}^{\tau}_{k+1}) + \sigma y^*_{k+1} (b\bar{x}^{\tau}_{k+1}+a\bar{y}^{\tau}_{k+1}) \right). 
	\end{align*}
	It follows from the inequality 
	\begin{equation*}
		\max\left\{ \alpha\bar{x}^{\tau}_{k+1}+\beta \bar{y}^{\tau}_{k+1} , \ 
		\alpha\bar{y}^{\tau}_{k+1}-\beta \bar{x}^{\tau}_{k+1} \right\}
		\leq \sqrt{(\alpha^2+\beta^2)}\|\hat{X}^{\tau}_{k+1}\|
	\end{equation*}
	that 
	\begin{align*}
		&\sigma x^*_{k+1}\left(\alpha\bar{x}_{k+1}+\beta \bar{y}_{k+1}\right)  
		+ \sigma y^*_{k+1}\left(\alpha\bar{y}_{k+1}-\beta \bar{x}_{k+1}\right) 
		+ \frac{\gamma\sigma}{\tau} \hat{X}^{\tau}_{k+1} \cdot \tilde{Z}^*_{k+1}(\omega)  \\
		\leq \ &\sigma \sqrt{(\alpha^2+\beta^2)} \|\hat{X}^{\tau}_{k+1}\| \left(|x^*_{k+1}| + |y^*_{k+1}|\right) + \frac{\gamma\sigma}{\tau} \|\hat{X}^{\tau}_{k+1}\| \|\tilde{Z}^*_{k+1}(\omega)\|  \\
		\leq \ &\sigma \sqrt{2(\alpha^2+\beta^2)}\|\hat{X}^{\tau}_{k+1}\| \|Z_{k+1}^*\| + \frac{\gamma\sigma}{\tau}\|\hat{X}^{\tau}_{k+1}\| \|\tilde{Z}^*_{k+1}(\omega)\|. 
	\end{align*} 
	On the other hand, from 
	\begin{equation*}
		\left|  \|\hat{X}^{\tau}_{k+1}+\sigma Z^*_{k+1}\|^2 - \|\hat{X}^{\tau}_{k+1}\|^2 - \sigma^2 \|Z_{k+1}^*\|^2 \right|
		\leq 2\sigma \|\hat{X}^{\tau}_{k+1}\| \|Z_{k+1}^*\|, 
	\end{equation*}
	we know that 
	\begin{equation*}
		a\|\hat{X}^{\tau}_{k+1}\|^2  \|\hat{X}^{\tau}_{k+1}+\sigma Z^*_{k+1}\|^2 
		\geq a\|\hat{X}^{\tau}_{k+1}\|^4 -2a\sigma \|Z_{k+1}^*\| \|\hat{X}^{\tau}_{k+1}\|^3  +  a\sigma^2 \|Z_{k+1}^*\|^2 \|\hat{X}^{\tau}_{k+1}\|^2. 
	\end{equation*}
	By using the following inequality
	\begin{align*}
		\left| x^*_{k+1} (a\bar{x}^{\tau}_{k+1}-b\bar{y}^{\tau}_{k+1}) +  y^*_{k+1} (b\bar{x}^{\tau}_{k+1}+a\bar{y}^{\tau}_{k+1}) \right| 
		\leq 2\sqrt{a^2+b^2} \|Z_{k+1}^*\| \|\hat{X}^{\tau}_{k+1}\|, 
	\end{align*}
	we derive that 
	\begin{align*}
		&\left| \sigma x^*_{k+1} (a\bar{x}^{\tau}_{k+1}-b\bar{y}^{\tau}_{k+1}) +  \sigma y^*_{k+1} (b\bar{x}^{\tau}_{k+1}+a\bar{y}^{\tau}_{k+1}) \right| \|\hat{X}^{\tau}_{k+1}+\sigma Z^*_{k+1}\|^2 \\
		\leq \ &2\sigma\sqrt{a^2+b^2} \|Z_{k+1}^*\| \|\hat{X}^{\tau}_{k+1}\| \left( \|\hat{X}^{\tau}_{k+1}\|^2 + \sigma^2 \|Z_{k+1}^*\|^2 + 2\sigma \|\hat{X}^{\tau}_{k+1}\|  \|Z_{k+1}^*\| \right). 
	\end{align*}
	Consequently, by Young's inequality we have 
	\begin{align*}
		\frac{1}{2}\|\hat{X}^{\tau}_{k+1}\|^2 &\leq \frac{1}{2}\|\hat{X}^{\tau}_{k}\|^2 + \tau \alpha \|\hat{X}^{\tau}_{k+1}\|^2 + \tau\sigma \sqrt{2(\alpha^2+\beta^2)}\|\hat{X}^{\tau}_{k+1}\| \|Z_{k+1}^*\| + \gamma\sigma\|\hat{X}^{\tau}_{k+1}\| \|\tilde{Z}^*_{k+1}(\omega)\| \\ 
		&\quad -\tau \left( a\|\hat{X}^{\tau}_{k+1}\|^4 -2a\sigma \|Z_{k+1}^*\| \|\hat{X}^{\tau}_{k+1}\|^3 +  a\sigma^2 \|Z_{k+1}^*\|^2 \|\hat{X}^{\tau}_{k+1}\|^2 \right) \\
		&\quad + \tau\sqrt{a^2+b^2} \left( 2\sigma\|Z_{k+1}^*\| \|\hat{X}^{\tau}_{k+1}\|^3 + 4\sigma^2 \|Z_{k+1}^*\|^2\|\hat{X}^{\tau}_{k+1}\|^2 + 2\sigma^3 \|Z_{k+1}^*\|^3 \|\hat{X}^{\tau}_{k+1}\| \right) \\ 
		&\leq \frac{1}{2}\|\hat{X}^{\tau}_{k}\|^2 - \frac{a\tau}{2} \|\hat{X}^{\tau}_{k+1}\|^2 + C\tau  \left(  \|Z_{k+1}^*\|^4 + \tau^{-\frac{4}{3}} \|\tilde{Z}^*_{k+1}(\omega)\|^{\frac{4}{3}} + 1  \right). 
	\end{align*}
	It follows that 
	\begin{align*}
		\|\hat{X}^{\tau}_{k+1}\|^2 &\leq \frac{1}{1+a\tau} \|\hat{X}^{\tau}_{k}\|^2 + \frac{C\tau}{1+a\tau} (\|Z_{k+1}^*\|^4 + 1 ) +  \frac{C\tau^{-\frac{1}{3}}}{1+a\tau} \|\tilde{Z}^*_{k+1}(\omega)\|^{\frac{4}{3}} \\
		&\leq \frac{\|\hat{X}^{\tau}_{0}\|^2}{(1+a\tau)^{k+1}}  + C\tau \sum_{i=1}^{k+1} \frac{\|Z_{i}^*\|^4 + 1}{(1+a\tau)^{k+2-i}} 
		+  C\tau^{-\frac{1}{3}} \sum_{i=1}^{k+1} \frac{\|\tilde{Z}_{i}^*(\omega)\|^\frac{4}{3}}{(1+a\tau)^{k+2-i}} \\
		&\leq \frac{\|\hat{X}^{\tau}_{0}\|^2}{(1+a\tau)^{k+1}}  + \frac{C}{a} +  C\tau \sum_{i=0}^{k} \frac{\|Z_{k+1-i}^*\|^4 + \tau^{-\frac{4}{3}} \|\tilde{Z}_{k+1-i}^*(\omega)\|^{\frac{4}{3}}}{(1+a\tau)^{i+1}}. 
	\end{align*}
	
	We next show that $\varphi_{\tau}$ has a random absorbing set, thereby concluding the existence of random attractor. 
	Denote 
	\begin{align*}
		\hat{\mathcal{A}}_{\tau}(\omega) =  \left(  1 + \frac{C}{a} + C\tau \sum_{i=0}^{\infty} \frac{\|Z_{k+1-i}^*\|^4 + \tau^{-\frac{4}{3}} \|\tilde{Z}_{k+1-i}^*(\omega)\|^{\frac{4}{3}}}{(1+a\tau)^{i+1}}  \right)^{\frac{1}{2}}. 
	\end{align*}
	Recalling the definition of $\hat{\varphi}_{\tau}$ (i.e., \eqref{varphitau}), it is evident that $\hat{\varphi}_{\tau}(k,\omega,\bm{y})$ is equal to $\hat{X}^{\tau}_k$ with the initial value  $\hat{X}^{\tau}_0=\bm{y} \in \mathbb{R}^2$. Then it follows that  
	\begin{align*}
		\|\hat{\varphi}_{\tau}(k,\omega,\bm{y})\|^2 
		\leq \left(\frac{1}{1+a\tau}\right)^{k} \|\bm{y}\|^2 + \hat{\mathcal{A}}_{\tau}(\omega), 
	\end{align*}
	which, together with \eqref{hatphi and phi}, means that 
	\begin{equation}\label{hatphi1}
		\|\varphi_{\tau}(k, \omega, \bm{x})\|^2 
		=  \left\|\hat{\varphi}_{\tau}(k,\omega,\bm{x}-\sigma Z^{*}(\omega)) + \sigma Z^*_{k} \right\|^2
		\leq \frac{2 \|\bm{x}-\sigma Z^*(\omega)\|^2}{(1+a\tau)^{k}} + 2\hat{\mathcal{A}}_{\tau}(\omega) + 2\sigma^2\|Z^*_{k}\|^2. 
	\end{equation}
	Substituting $\omega$ with $\theta_{-t_{k}} \omega$ into \eqref{hatphi1}, we have 
	\begin{equation*}
		\|\varphi_{\tau}(k, \theta_{-t_{k}} \omega, \bm{x})\|^2 
		\leq \frac{2\|\bm{x}-\sigma Z^*(\theta_{-t_{k}} \omega)\|^2}{(1+a\tau)^{k}}  + 2\hat{\mathcal{A}}_{\tau}(\theta_{-t_{k}}\omega) + 2\sigma^2\|Z^*(\omega)\|^2, 
	\end{equation*}
	where 
	\begin{equation*}
		\hat{\mathcal{A}}_{\tau}(\theta_{-t_{k}}\omega) =  \left(  1 + \frac{C}{a} +  C\tau \sum_{i=0}^{\infty} \frac{\|Z^*_{1-i}\|^4 + \tau^{-\frac{4}{3}} \|\tilde{Z}^*_{1-i}(\omega)\|^{\frac{4}{3}}}{(1+a\tau)^{i+1}}   \right)^{\frac{1}{2}}. 
	\end{equation*}
	Finally, define 
	\begin{equation*}
		\mathcal{A}_\tau(\omega) := 2\hat{\mathcal{A}}_{\tau}(\theta_{-t_{k}}\omega) + 2\sigma^2\|Z^*(\omega)\|^2 + 1, 
	\end{equation*}
	and 
	\begin{equation}\label{absorb set}
		K_{\tau}(\omega) := \left\{ \bm{x}\in\mathbb{R}^2: \|\bm{x}\|^2 \leq \mathcal{A}_\tau(\omega) \right\}. 
	\end{equation}
	By $\|Z^*(\omega)\|<\infty$ for all $\omega\in\Omega$ and \eqref{Z*}, we have 
	\begin{align*}
		|\mathcal{A}_\tau(\omega)| 
		&\leq 2\sigma^2\|Z^*(\omega)\|^2 + C\left( 1+\left(  \tau \sum_{i=0}^{\infty} \frac{\|Z^*_{1-i}\|^4 + \tau^{-\frac{4}{3}} \|\tilde{Z}^*_{1-i}(\omega)\|^{\frac{4}{3}}}{(1+a\tau)^{i+1}}   \right)^{\frac{1}{2}} \right) \\
		&\leq 2\sigma^2\|Z^*(\omega)\|^2 + C\left( 1+\left(  \tau \sum_{i=0}^{\infty} \frac{(L_{\delta,\omega}+\tilde{L}_{\delta,\omega}|t_{i-1}|^{\delta})^4 + 1}{(1+a\tau)^{i+1}}   \right)^{\frac{1}{2}} \right) 
		< \infty. 
	\end{align*}
	Consequently, $K_{\tau}$ defined by \eqref{absorb set} is a random absorbing set for $\varphi_{\tau}$. 
	By the result of \cite[Theorem B.2]{Doan2018}, there exists a random attractor $A^{\tau}(\omega)$ for $\varphi_{\tau}$. 
\end{proof}

\subsection{SRB measure for the discrete RDS} 
The sample measure defined by $\mu^{\tau}_{\omega}:=\delta_{A^{\tau}(\omega)}$ is invariant under $\varphi_{\tau}$, following directly from the invariance of the random attractor $A^{\tau}(\omega)$. 
Moreover, it is also an SRB measure under certain conditions, as shown in the following theorem. 

\begin{thm}\label{Thm:SRB} 
	There exist constants $b_{0}\in(0,\infty)$ and $\tau_{0}\in(0,1)$, such that the sample measure $\mu^{\tau}_{\omega}$ for the discrete RDS $\varphi_{\tau}$ is an SRB measure whenever $b>b_{0}$ and $\tau<\tau_{0}$. 
\end{thm}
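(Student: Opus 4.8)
The plan is to mirror the structure of Proposition \ref{Prop:SRB}, which gives an SRB criterion (following \cite[Theorem C.2]{Chekroun2011}) requiring three ingredients: (i) a random attractor, (ii) H\"ormander's condition, and (iii) a strictly positive Lyapunov exponent. Ingredient (i) is already available from Theorem \ref{Thm:discrete attractor}, which furnishes the random attractor $A^{\tau}(\omega)$ together with its invariant sample measure $\mu^{\tau}_{\omega}=\delta_{A^{\tau}(\omega)}$. Ingredient (ii) is inherited essentially unchanged: since $\sigma>0$ the driving noise is non-degenerate, and the backward Euler map $F_{\tau}$ is smooth on the admissible range of $\tau$, so the relevant non-degeneracy/regularity hypotheses continue to hold for the discrete RDS $\varphi_{\tau}$. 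Thus the whole difficulty concentrates in ingredient (iii): establishing a strictly positive \emph{numerical} Lyapunov exponent $\lambda_{\tau}$ for $\varphi_{\tau}$ when $b$ is large and $\tau$ is small.

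For step (iii) I would \emph{not} attempt a direct computation of $\lambda_{\tau}$ from the variational equation of the backward Euler scheme, since (as the introduction warns) this entails unwieldy evaluations of the numerical map and its derivative. Instead the strategy is comparative: relate $\lambda_{\tau}$ to the continuous-time exponent $\lambda$ of Proposition \ref{Prop:positive lyp} through the numerical error. Concretely, I would define $\lambda_{\tau}$ via the linearization of \eqref{BEM} along a numerical trajectory and express it, in the spirit of the Furstenberg--Khasminskii formula \eqref{FK formula}, as the integral of the observable $Q$ (or its discrete analogue) against the stationary measure $\nu_{\tau}$ of the discrete angle-radius process. The two facts I would invoke are those flagged in the introduction and proved in Section \ref{Sec:proof}: the ergodicity of the numerical approximation (yielding a unique $\nu_{\tau}$ with a uniform-in-$\tau$ Lyapunov/minorization estimate analogous to \eqref{ergodic1}) and the strong convergence of the backward Euler method. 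Combining these gives a bound of the form $|\lambda_{\tau}-\lambda|\le C\,\tau^{q}$ for some convergence rate $q>0$, with $C$ independent of $\tau$.

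Granting this error estimate, the conclusion is a two-parameter argument. By Proposition \ref{Prop:positive lyp}, $\lambda=\lambda(\alpha,a,b,\sigma)\sim (2b\sigma)^{2/3}\lambda_{0}\int\gamma^{2/3}\,\tilde{\nu}(\rmd\gamma)$ as $b\to\infty$, so there is $b_{0}>0$ with $\lambda>0$ whenever $b>b_{0}$; fix such a $b$ and set $c:=\lambda/2>0$. Then choose $\tau_{0}\in(0,1)$ small enough that $C\tau^{q}<c$ for all $\tau<\tau_{0}$, whence
\begin{equation*}
	\lambda_{\tau} \ge \lambda - |\lambda_{\tau}-\lambda| > \lambda - c = \tfrac{1}{2}\lambda > 0 .
\end{equation*}
This secures the positive numerical Lyapunov exponent, and together with (i) and (ii) the criterion of \cite[Theorem C.2]{Chekroun2011} certifies that $\mu^{\tau}_{\omega}$ is an SRB measure for $b>b_{0}$ and $\tau<\tau_{0}$.

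The main obstacle is the uniform-in-$\tau$ comparison $|\lambda_{\tau}-\lambda|\le C\tau^{q}$. The delicate point is that the Lyapunov exponent is an \emph{asymptotic} ($t\to\infty$) quantity, so one cannot simply transfer a finite-time strong convergence bound; the error constant must be controlled uniformly over the whole time horizon. I expect this to require combining the exponential ergodic contraction (with $\tau$-independent constants $\iota,\kappa$ as in \eqref{ergodic1}) with the strong convergence rate so that the long-time accumulation of error is absorbed by the contraction, yielding a stationary-measure comparison $|\nu_{\tau}(Q)-\nu(Q)|$ that is $O(\tau^{q})$. Verifying that the ergodicity constants and the convergence constants are genuinely independent of $\tau$ on the admissible step-size range is where the real work lies, and it is precisely what the propositions of Section \ref{Sec:proof} are designed to supply.
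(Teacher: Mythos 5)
Your strategy for the core difficulty --- relating $\lambda^{\tau}$ to $\lambda$ through the ergodic structure rather than computing it directly from the variational recursion --- is exactly the paper's, and your diagnosis of the crux (the exponent is an asymptotic quantity, so a finite-time convergence bound must be spliced with $\tau$-uniform exponential ergodicity) is precisely how Proposition \ref{Prop:lambda and lambda tau} is engineered: the remainders $\kappa\iota^{t_k}\Gamma(\gamma_0,\psi_0)$ and $\bar{\kappa}\bar{\iota}^{k\tau}\bar{\Gamma}(X^{\tau}_0,\xi^{\tau}_0)$ are made small by fixing a large horizon $T=N\tau$, and only a finite-time weak error at the single time $T$ remains, which Proposition \ref{Prop:strong convergence} then handles. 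The genuine flaw is in your step (ii): you certify SRB by re-running Proposition \ref{Prop:SRB}, i.e.\ \cite[Theorem C.2]{Chekroun2011} with H\"ormander's condition, but that is a criterion for the continuous-time RDS generated by an SDE, and H\"ormander's condition (a Lie-bracket condition on vector fields) has no direct meaning for the one-step map $F_{\tau}(\cdot+\sigma\Delta W_{k+1})$. The correct discrete-time hypothesis is absolute continuity of the transition probabilities, which the paper verifies explicitly: from $X^{\tau}_{k+1}=F_{\tau}(X^{\tau}_{k}+\sigma\Delta W_{k+1})$ one gets $\mathbb{P}\left( X^{\tau}_{k+1}\in B \,|\, X^{\tau}_{k}=x \right)\le\mathbb{P}\left( x+\sigma\Delta W_{k+1}\in G(B) \right)$ with $G=\text{id}_{\mathbb{R}^2}-\tau F$, and the Gaussian law of the increment does the rest; positivity of $\lambda^{\tau}$ and the attractor of Theorem \ref{Thm:discrete attractor} then feed into \cite[Theorem B]{Ledrappier1988}, the discrete-time SRB criterion. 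Your phrase that ``the relevant non-degeneracy/regularity hypotheses continue to hold'' gestures at this fact, but as written the step invokes an inapplicable theorem and would not close.

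Two further points are over- or under-claims relative to what the paper's tools deliver. First, your proposed two-sided rate $|\lambda^{\tau}-\lambda|\le C\tau^{q}$ is unavailable: Proposition \ref{Prop:strong convergence} is only qualitative, since the localization by the stopping times $\vartheta_R,\vartheta^{\tau}_R$ forces one to send $R\to\infty$ before $\tau\to0$ with constants $C(R,p,T)$, so no rate survives, and no $O(\tau^{q})$ comparison of stationary measures follows. The paper instead proves only the one-sided inequality $\lambda\le\lambda^{\tau}+(\text{errors})$ and needs only each error term below $\lambda/4$, yielding $\lambda^{\tau}>\lambda/4>0$; your two-parameter argument survives verbatim once you replace $C\tau^{q}$ by a rate-free modulus of continuity. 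Second, your sketch asserts that $\lambda^{\tau}$ equals the integral of the discrete analogue of $Q$ against $\nu^{\tau}$, but the discrete Furstenberg--Khasminskii computation only gives the lower bound $\lambda^{\tau}\ge\nu^{\tau}(\hat{Q})-\frac{\tau}{2}\tilde{\nu}^{\tau}(\mathcal{H})$, because expanding $\log\|U^{\tau}_{k}\|^2$ around $U^{\tau}_{k+1}$ leaves a quadratic remainder controlled by $\mathcal{H}(X^{\tau}_{k+1})$; this bias term $\frac{\tau}{2}\tilde{\nu}^{\tau}(\mathcal{H})$, finite by the $\tau$-uniform moment bounds behind Proposition \ref{Prop:ergodic} and absorbed into the $\lambda/4$ budget, is absent from your sketch and must be accounted for.
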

\begin{remark}
	Since the sample measure associated with $\varphi_{\tau}$ is the SRB measure, it is also a physically relevant invariant measure (i.e., \eqref{physical measure} holds) and can thus be computed by simply flowing a large set of initial points for a fixed noise realization; this is exactly how figures in Section \ref{Sec:numerical example} are obtained. 
\end{remark}
The proof of Theorem \ref{Thm:SRB} relies on several auxiliary propositions. We will therefore defer the proof at the end of this subsection and first introduce these necessary propositions. 
Linearizing \eqref{BEM} along a trajectory $X^{\tau}_{k}$ gives the process $U^{\tau}_{k}\in\mathbb{R}^2$ satisfying 
\begin{equation}\label{numerical variational SDE}
	U^{\tau}_{k+1}  = U^{\tau}_{k} + \tau \left( \begin{bmatrix}
		\alpha & -\beta \\ \beta & \alpha 
	\end{bmatrix} U^{\tau}_{k+1} 
	+ \|X^{\tau}_{k+1}\|^2 \begin{bmatrix}
		-a & -b \\ b & -a 
	\end{bmatrix} U^{\tau}_{k+1} + 2 X^{\tau}_{k+1} \cdot U^{\tau}_{k+1}  \begin{bmatrix}
		-a & -b \\ b & -a 
	\end{bmatrix} X^{\tau}_{k+1} \right). 
\end{equation}
As shown in Appendix \ref{Appendix:solvable of IES}, $U^{\tau}_{k}$, $k\geq0$ are well-defined whenever $\tau<\frac{1}{1+|\alpha|}$. 
Similar to the continuous case, we introduce the polar coordinate $U^{\tau}_{k} = \|U^{\tau}_{k}\| (\cos \xi^{\tau}_k, \sin \xi^{\tau}_k)^{\top}$ and define the \textit{(top) numerical Lyapunov exponent} $\lambda^{\tau}$ as 
\begin{equation}\label{numerical lyp}
	\lambda^{\tau} := \lim\limits_{N \rightarrow \infty} \frac{1}{N\tau} \log \|U^{\tau}_{N}\|. 
\end{equation}
In order to demonstrate the positivity of $\lambda^{\tau}$, we need the ergodicity of the process $\{(X^{\tau}_k,\xi^{\tau}_k): k\geq0\}$. To this end, introduce the Lyapunov function $\bar{\Gamma}: \mathbb{R}^2\times\mathbb{R}/(\pi\mathbb{Z}) \rightarrow [1, \infty)$ defined by 
\begin{equation}\label{barGamma}
	\bar{\Gamma}(X,\xi) := \|X\|^2+\xi^2+M, 
\end{equation}
where $M\geq1$ is an arbitrary constant. Then we have the following proposition, whose proof is postponed to Section \ref{Sec:proof}. 
\begin{prop}\label{Prop:ergodic}
	Suppose that $b>4a$ and $\tau<\min\{\frac{a}{1+4b|\beta|},\frac{1}{1+12|\beta|},\frac{1}{1+4|\alpha|},\frac{4}{a}\}$. Then there exists a unique stationary measure $\nu^{\tau}$ for $\{(X^{\tau}_k,\xi^{\tau}_k): k\geq0\}$, and a  unique stationary measure $\tilde{\nu}^{\tau}$ for $\{X^{\tau}_k: k\geq0\}$. 
	Furthermore, there exist $\bar{\iota}\in(0,1)$ and $\bar{\kappa}\in(0,\infty)$, such that 
	\begin{equation}\label{ergodic3}
		\left| \mathbb{E}^{X^{\tau}_{0},\xi^{\tau}_{0}}[\hat{Q}(X^{\tau}_k,\xi^{\tau}_k)] - \nu^{\tau}(\hat{Q}) \right| 
		\leq \bar{\kappa} \bar{\iota}^{k\tau} \bar{\Gamma}(X^{\tau}_{0},\xi^{\tau}_{0}), 
	\end{equation}
	where $\hat{Q}$ is given by \eqref{hatQ}. 
\end{prop}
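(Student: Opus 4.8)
\textbf{Proof proposal for Proposition \ref{Prop:ergodic}.}

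The plan is to establish ergodicity of the discrete Markov chain $\{(X^{\tau}_k,\xi^{\tau}_k):k\geq0\}$ by verifying a discrete Lyapunov (Foster--Lyapunov drift) condition together with a minorization condition, and then to deduce the geometric convergence estimate \eqref{ergodic3} from a standard Harris-type theorem in the spirit of \cite{Mattingly2002}. The structure mirrors the continuous-time argument given in the proof of the lemma containing \eqref{ergodic1}, but now everything must be carried out at the level of the one-step transition operator of the backward Euler scheme rather than an infinitesimal generator. First I would derive the joint recursion satisfied by $(X^{\tau}_k,\xi^{\tau}_k)$: from \eqref{BEM} one reads off the implicit update for $X^{\tau}_{k+1}$, and from the linearized recursion \eqref{numerical variational SDE} one extracts the update for the angle $\xi^{\tau}_k$ (analogous to how \eqref{xi} was obtained in the continuous case), so that the pair forms a genuine Markov chain driven by the i.i.d.\ increments $\Delta W_{k+1}$.

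Next I would verify the drift condition for the Lyapunov function $\bar{\Gamma}$ defined in \eqref{barGamma}. The goal is an inequality of the form
\begin{equation*}
	\mathbb{E}\bigl[\bar{\Gamma}(X^{\tau}_{k+1},\xi^{\tau}_{k+1}) \,\big|\, (X^{\tau}_{k},\xi^{\tau}_{k})\bigr] \leq \rho\,\bar{\Gamma}(X^{\tau}_{k},\xi^{\tau}_{k}) + C
\end{equation*}
for some contraction factor $\rho\in(0,1)$ and constant $C<\infty$, valid under the stated step-size restriction $\tau<\min\{\frac{a}{1+4b|\beta|},\frac{1}{1+12|\beta|},\frac{1}{1+4|\alpha|},\frac{4}{a}\}$. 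For the $\|X\|^2$ part I would exploit the dissipativity furnished by the cubic term $-a\|X\|^2 X$ in \eqref{nonlinearity}: testing the implicit relation \eqref{BEM} against $X^{\tau}_{k+1}$ yields a one-step contraction of the radial energy by a factor close to $(1+a\tau)^{-1}$, precisely the mechanism already exploited in the \textit{a priori} estimate inside the proof of Theorem \ref{Thm:discrete attractor}; the Gaussian increment contributes the additive constant $C$ via its finite moments. For the $\xi^2$ part I would use the angular update together with $|\xi|\leq\pi$ on $\mathbb{R}/(\pi\mathbb{Z})$ to bound its growth, controlling the cross terms by Young's inequality and absorbing them at the cost of enlarging $C=C(M,a,b,\alpha,\beta,\sigma)$; the explicit $b$- and $\beta$-dependent thresholds are exactly what guarantee $\rho<1$. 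The minorization condition follows from non-degeneracy of the Gaussian noise: since $\Delta W_{k+1}$ has a strictly positive Lebesgue density and $F_\tau$ is a diffeomorphism (by the unique solvability of \eqref{BEM} established in Appendix \ref{Appendix:solvable of IES}), the transition kernel possesses a positive continuous density on compact sets, giving a uniform lower bound on level sets $\{\bar{\Gamma}\leq R\}$.

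With both conditions in hand, the Harris theorem yields a unique stationary measure $\nu^{\tau}$ together with the geometric bound \eqref{ergodic3}, provided the test function is dominated by $\bar{\Gamma}$. I would then check that $\hat{Q}$ from \eqref{hatQ}, rewritten via \eqref{hatQ and Q} as $Q(\gamma,\psi)=\alpha-2a\gamma^2+\gamma^2(b\sin2\psi-a\cos2\psi)$, satisfies $|\hat{Q}|\leq C\,\bar{\Gamma}$, which is immediate since $\hat{Q}$ grows only quadratically in $\|X\|$; applying the convergence estimate to this particular $f=\hat{Q}$ gives \eqref{ergodic3}. Existence and uniqueness of the marginal stationary measure $\tilde{\nu}^{\tau}$ for $\{X^{\tau}_k\}$ follows by projecting, since the $X$-component is itself Markov. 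I expect the main obstacle to be the drift estimate for the angular coordinate: unlike the radial variable, $\xi^{\tau}_k$ has no intrinsic dissipation, so controlling $\mathbb{E}[\xi^2]$ requires careful use of the compactness of $\mathbb{R}/(\pi\mathbb{Z})$ and a delicate balancing of the shear-induced cross terms against the radial contraction, which is precisely where the condition $b>4a$ and the sharp step-size bounds enter; verifying that these thresholds genuinely force $\rho<1$ will be the technically most demanding part of the argument.
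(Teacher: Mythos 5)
Your drift analysis matches the paper's: you test the implicit relation \eqref{BEM} against $X^{\tau}_{k+1}$ to extract the radial contraction from the cubic term, bound the angular contribution crudely via $|\xi^{\tau}_{k}|\leq\pi$, and let the Gaussian moments supply the additive constant; this is exactly how the paper derives the one-step drift bound \eqref{Lyapunov condition on xk} for $\bar{\Gamma}^{p}$ from the backward relation \eqref{discrete system}, and for that step only $\tau<\frac{4}{a}$ is needed. The genuine gap is in your minorization step. You claim that, since $\Delta W_{k+1}$ has a strictly positive density and $F_{\tau}$ is a diffeomorphism, the transition kernel of the pair $(X^{\tau}_k,\xi^{\tau}_k)$ has a positive continuous density on compacts. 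This fails: the state space $\mathbb{R}^2\times\mathbb{R}/(\pi\mathbb{Z})$ is three-dimensional while the noise is two-dimensional and enters only the $X$-equation; conditionally on $(X^{\tau}_k,\xi^{\tau}_k)$, the angle $\xi^{\tau}_{k+1}$ is a \emph{deterministic} function of $\xi^{\tau}_k$ and $X^{\tau}_{k+1}$, because the variational recursion \eqref{numerical variational SDE} carries no noise of its own (see the update \eqref{numerical xi}). Hence the one-step kernel is supported on a two-dimensional graph and is singular with respect to Lebesgue measure on the product space, so non-degeneracy of the increments alone yields nothing for the joint chain. This is a degenerate (hypoelliptic-type) situation, and the paper resolves it in Appendix \ref{Appendix:minorization} by an explicit multi-step controllability construction: concrete choices of the noise increments over two to four steps steer $(x^{\tau},y^{\tau},\xi^{\tau})$ into a fixed neighborhood of $(0,0,\tfrac{\pi}{2})$, split into three cases according to the initial angle, with continuity of the resulting map in the increments then giving a positive lower bound on the transition probability.

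This also corrects your accounting of the hypotheses. You place $b>4a$ and the $\beta$-dependent thresholds $\tau<\frac{a}{1+4b|\beta|}$, $\tau<\frac{1}{1+12|\beta|}$ inside the drift estimate (``balancing shear-induced cross terms against the radial contraction''), but in the paper the drift estimate absorbs all angular cross terms with $|\xi|\leq\pi$ and Young's inequality at the cost of a constant, with no smallness from $b$; the conditions $b>4a$ and the sharp step-size bounds are consumed entirely by the controllability argument, where they guarantee that the explicit formulas for the steering states are well defined (strict positivity of the quantities under the square roots in Cases 1 and 2, and well-posedness of the $\arcsin$ update), while $\tau<\frac{1}{1+4|\alpha|}$ comes from solvability of the implicit scheme. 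Your remaining steps are sound: the bound $|\hat{Q}|\leq C\,\bar{\Gamma}$ with $\bar{\Gamma}$ from \eqref{barGamma} is immediate from the quadratic growth of $\hat{Q}$ in \eqref{hatQ}, the Harris-type theorem of Mattingly--Stuart--Higham then gives \eqref{ergodic3}, and for the marginal chain $\{X^{\tau}_k\}$ alone your non-degeneracy argument does work, exactly as the paper notes. To close the proof you must replace the density claim for the joint chain by a controllability-type minorization; that, not the drift, is the technically demanding part of the argument.
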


In the following proposition we present a critical inequality linking $\lambda^{\tau}$ to $\lambda$, which is used to ensure the positivity of $\lambda^{\tau}$. 
\begin{prop}\label{Prop:lambda and lambda tau}
	Suppose that $b>4a$ and $\tau<\min\{\frac{a}{1+4b|\beta|},\frac{1}{1+12|\beta|},\frac{1}{1+4|\alpha|},\frac{4}{a}\}$. Then there exist $\iota,\bar{\iota}\in(0,1)$ and $\kappa,\bar{\kappa}\in(0,\infty)$, such that 
	\begin{equation}\label{lambda tau}
		\begin{aligned}
			\lambda &\leq \lambda^{\tau}+  \left| \mathbb{E}^{X^{\tau}_{0},\xi^{\tau}_{0}}[\hat{Q}(X^{\tau}_{k},\xi^{\tau}_{k})] - \mathbb{E}^{X_{0},\xi_{0}}[\hat{Q}(X_{t_k},\xi_{t_k})] \right| \\
			&\quad +\kappa \iota^{t_k} \Gamma(\gamma_{0},\psi_{0}) + \bar{\kappa} \bar{\iota}^{k\tau} \bar{\Gamma}(X^{\tau}_{0},\xi^{\tau}_{0}) + \frac{\tau}{2} \tilde{\nu}^{\tau}(\mathcal{H}), 
		\end{aligned}
	\end{equation}
	where the function $\mathcal{H}:\mathbb{R}^2\rightarrow\mathbb{R}$ is given by $\mathcal{H}(Y)= 3(\alpha^2+\beta^2)+15(a^2+b^2)\|Y\|^4$. 
\end{prop}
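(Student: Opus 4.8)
The plan is to express the numerical Lyapunov exponent $\lambda^{\tau}$ through a telescoping decomposition of $\log\|U^{\tau}_{N}\|$, to extract from each step a copy of the Rayleigh quotient $\hat{Q}(X^{\tau}_{k+1},\xi^{\tau}_{k+1})$ plus a controllable second-order defect, and then to reduce the whole estimate to comparing the numerical ergodic average $\nu^{\tau}(\hat{Q})$ with $\lambda=\nu(Q)$. The three error contributions in \eqref{lambda tau} will arise respectively from the continuous ergodic estimate \eqref{ergodic2}, its discrete counterpart \eqref{ergodic3}, and the one-step consistency defect of the backward Euler variational scheme, which is precisely what produces the term $\tfrac{\tau}{2}\tilde{\nu}^{\tau}(\mathcal{H})$.

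First I would rewrite \eqref{numerical variational SDE} as $U^{\tau}_{k+1}=(I-\tau L_{k+1})^{-1}U^{\tau}_{k}$, where the linear operator $L_{k+1}$ acts by $L_{k+1}U = RU + \|X^{\tau}_{k+1}\|^2 MU + 2(X^{\tau}_{k+1}\cdot U)\,MX^{\tau}_{k+1}$ with $R=\begin{bmatrix}\alpha & -\beta\\ \beta&\alpha\end{bmatrix}$ and $M=\begin{bmatrix}-a&-b\\ b&-a\end{bmatrix}$; invertibility of $I-\tau L_{k+1}$ (hence $U^{\tau}_{k+1}\neq0$) holds for small $\tau$ by Appendix \ref{Appendix:solvable of IES}. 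From $U^{\tau}_{k}=(I-\tau L_{k+1})U^{\tau}_{k+1}$ I obtain $\|U^{\tau}_{k}\|^2=\|U^{\tau}_{k+1}\|^2-2\tau\,U^{\tau}_{k+1}\cdot L_{k+1}U^{\tau}_{k+1}+\tau^2\|L_{k+1}U^{\tau}_{k+1}\|^2$. The central observation is the Rayleigh-quotient identity $U^{\tau}_{k+1}\cdot L_{k+1}U^{\tau}_{k+1}=\|U^{\tau}_{k+1}\|^2\,\hat{Q}(X^{\tau}_{k+1},\xi^{\tau}_{k+1})$, which follows because $\hat{Q}$ in \eqref{hatQ} is exactly $u\cdot L(X)u$ at the unit vector $u=(\cos\xi,\sin\xi)^{\top}$ (the same computation that produced \eqref{logU} in the noiseless variational equation). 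Dividing by $\|U^{\tau}_{k+1}\|^2$ and taking logarithms gives $\log\|U^{\tau}_{k+1}\|-\log\|U^{\tau}_{k}\|=-\tfrac12\log(1-x_{k+1})$, where $x_{k+1}=2\tau\hat{Q}(X^{\tau}_{k+1},\xi^{\tau}_{k+1})-\tau^2\|L_{k+1}U^{\tau}_{k+1}\|^2/\|U^{\tau}_{k+1}\|^2$. Applying $-\log(1-x)\geq x$ yields $\log\|U^{\tau}_{k+1}\|-\log\|U^{\tau}_{k}\|\geq \tau\hat{Q}(X^{\tau}_{k+1},\xi^{\tau}_{k+1})-\tfrac{\tau^2}{2}\|L_{k+1}u_{k+1}\|^2$, with $u_{k+1}=U^{\tau}_{k+1}/\|U^{\tau}_{k+1}\|$. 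Crucially this one-sided bound points the right way, since the proposition only needs a lower bound on $\lambda^{\tau}$.

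Next I would bound the defect by $\mathcal{H}$: estimating the operator norm on unit vectors gives $\|L_{k+1}u\|\leq\sqrt{\alpha^2+\beta^2}+3\sqrt{a^2+b^2}\,\|X^{\tau}_{k+1}\|^2$, and a calibrated Young's inequality turns its square into $\|L_{k+1}u\|^2\leq 3(\alpha^2+\beta^2)+15(a^2+b^2)\|X^{\tau}_{k+1}\|^4=\mathcal{H}(X^{\tau}_{k+1})$. Summing the step inequality over $k=0,\dots,N-1$, dividing by $N\tau$, and sending $N\to\infty$, the Birkhoff ergodic theorem for the chain $\{(X^{\tau}_k,\xi^{\tau}_k)\}$ of Proposition \ref{Prop:ergodic} sends the $\hat{Q}$-sum to $\nu^{\tau}(\hat{Q})$ and the $\mathcal{H}$-sum to $\tilde{\nu}^{\tau}(\mathcal{H})$ (as $\mathcal{H}$ depends only on $X$), while $\tfrac{1}{N\tau}\log\|U^{\tau}_0\|\to0$, giving $\lambda^{\tau}\geq\nu^{\tau}(\hat{Q})-\tfrac{\tau}{2}\tilde{\nu}^{\tau}(\mathcal{H})$. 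Finally, writing $\lambda-\nu^{\tau}(\hat{Q})=\big(\lambda-\mathbb{E}^{X_0,\xi_0}[\hat{Q}(X_{t_k},\xi_{t_k})]\big)+\big(\mathbb{E}^{X_0,\xi_0}[\hat{Q}(X_{t_k},\xi_{t_k})]-\mathbb{E}^{X^{\tau}_0,\xi^{\tau}_0}[\hat{Q}(X^{\tau}_k,\xi^{\tau}_k)]\big)+\big(\mathbb{E}^{X^{\tau}_0,\xi^{\tau}_0}[\hat{Q}(X^{\tau}_k,\xi^{\tau}_k)]-\nu^{\tau}(\hat{Q})\big)$ and bounding the first and third brackets by \eqref{ergodic2} (recall $\lambda=\nu(Q)$ from \eqref{FK formula}) and \eqref{ergodic3}, combining with the lower bound on $\lambda^{\tau}$ produces exactly \eqref{lambda tau}.

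The hard part will be two points. First, the per-step computation must be carried out for the fully implicit, rank-one-perturbed operator $L_{k+1}$ rather than a fixed matrix, and one must verify the identity $\hat{Q}=u\cdot L(X)u$ cleanly so that the backward-Euler defect collapses to the single scalar $\tfrac{\tau^2}{2}\|L_{k+1}u\|^2$; forcing the operator-norm bound to land on the exact coefficients $3$ and $15$ of $\mathcal{H}$ requires the Young's inequality to be weighted correctly. Second, passing to the limit requires $\mathcal{H}\in L^1(\tilde{\nu}^{\tau})$, i.e.\ a uniform-in-$\tau$ fourth-moment bound for the backward Euler scheme; I would derive this from the (stronger) moment/Lyapunov estimates underlying Proposition \ref{Prop:ergodic}, and it is this integrability---together with the need for all thresholds on $\tau$ to match those of Proposition \ref{Prop:ergodic}---that makes the estimate delicate rather than merely routine.
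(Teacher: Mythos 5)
Your proposal is correct and follows essentially the same route as the paper's proof: the resolvent identity $U^{\tau}_{k}=(I-\tau L_{k+1})U^{\tau}_{k+1}$ is exactly the paper's Taylor expansion at $U^{\tau}_{k+1}$, your Rayleigh-quotient identity and the bound $\|L_{k+1}u\|^2\leq\mathcal{H}(X^{\tau}_{k+1})$ reproduce its two one-step estimates, and the logarithm inequality, Birkhoff limit via Proposition \ref{Prop:ergodic}, and final three-term decomposition using \eqref{ergodic2}, \eqref{ergodic3}, and $\lambda=\nu(Q)$ coincide with the paper's argument. The only cosmetic difference is that the paper obtains the constants $3$ and $15$ in $\mathcal{H}$ from a three-term Cauchy--Schwarz bound on $\|U^{\tau}_{k}-U^{\tau}_{k+1}\|^2$ rather than your weighted Young's inequality on the operator-norm bound, and the integrability $\tilde{\nu}^{\tau}(\mathcal{H})<\infty$ you flag indeed follows from the all-order Lyapunov estimates in the proof of Proposition \ref{Prop:ergodic}, as you anticipated.
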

\begin{proof}
%	Similar to the continuous case, we introduce the polar coordinates $U^{\tau}_{k} = \|U^{\tau}_{k}\| \begin{bmatrix}
%		\cos \xi^{\tau}_k \\[.2em] \sin \xi^{\tau}_k
%	\end{bmatrix}$, 
	By Taylor's expansion at $U^{\tau}_{k+1}$, we have 
	\begin{align*}
		\|U^{\tau}_{k}\|^2 &= \|U^{\tau}_{k+1}\|^2 + 2U^{\tau}_{k+1}\cdot \left(U^{\tau}_{k}-U^{\tau}_{k+1}\right) + \|U^{\tau}_{k}-U^{\tau}_{k+1}\|^2. 
	\end{align*}
	It follows from \eqref{numerical variational SDE} that  
	\begin{equation*}
		\begin{aligned}
			\|U^{\tau}_{k}-U^{\tau}_{k+1}\|^2
			%	&\leq 3 \tau^2  (\alpha u^{\tau}_{k+1} - \beta v^{\tau}_{k+1})^2 + 3\tau^2  ((x^{\tau}_{k+1})^2+(y^{\tau}_{k+1})^2)^2(au^{\tau}_{k+1}+bv^{\tau}_{k+1})^2    \\
			%	&\quad + 12\tau^2 (ax^{\tau}_{k+1}+by^{\tau}_{k+1})^2(x^{\tau}_{k+1}u^{\tau}_{k+1} + y^{\tau}_{k+1}v^{\tau}_{k+1})^2 \\
			%	&\quad + 3 \tau^2 (\beta u^{\tau}_{k+1} + \alpha v^{\tau}_{k+1})^2 +3 \tau^2 ((x^{\tau}_{k+1})^2+(y^{\tau}_{k+1})^2)^2(bu^{\tau}_{k+1}-av^{\tau}_{k+1})^2 \\[.2em]
			%	&\quad + 12\tau^2 (bx^{\tau}_{k+1}-ay^{\tau}_{k+1})^2(x^{\tau}_{k+1}u^{\tau}_{k+1}+y^{\tau}_{k+1}v^{\tau}_{k+1})^2 \\
			%	&\leq 3\tau^2 (\beta^2+\alpha^2)\|U^{\tau}_{k+1}\|^2 +  3\tau^2(a^2+b^2)  \|X^{\tau}_{k+1}\|^4 \|U^{\tau}_{k+1}\|^2  
			%	+12\tau^2 (a^2+b^2)\|X^{\tau}_{k+1}\|^4 \|U^{\tau}_{k+1}\|^2    \\
%			&\leq  \tau^2 \|U^{\tau}_{k+1}\|^2 \left(  3(\alpha^2+\beta^2)+15(a^2+b^2)\|X^{\tau}_{k+1}\|^4  \right)  \\
			\leq \tau^2 \|U^{\tau}_{k+1}\|^2 \mathcal{H}(X^{\tau}_{k+1}) 
		\end{aligned}
	\end{equation*}
	and 
	\begin{equation*}
		U^{\tau}_{k+1} \cdot \left(U^{\tau}_{k+1}-U^{\tau}_{k}\right) 
		%	&= \tau \|U^{\tau}_{k+1}\|^2 \Big(  \alpha - 2a\|X^{\tau}_{k+1}\|^2 + \left((x^{\tau}_{k+1})^2-(y^{\tau}_{k+1})^2\right)(b\sin2\xi^{\tau}_{k+1}-a\cos2\xi^{\tau}_{k+1}) \\
		%	&\qquad \qquad - 2x^{\tau}_{k+1}y^{\tau}_{k+1}(a\sin2\xi^{\tau}_{k+1}+b\cos2\xi^{\tau}_{k+1}) \Big)\\
		= \tau \|U^{\tau}_{k+1}\|^2 \hat{Q}(X^{\tau}_{k+1},\xi^{\tau}_{k+1}), 
	\end{equation*}
	where $\hat{Q}$ is given by \eqref{hatQ}. 
	Hence we obtain 
	\begin{align*}
		\|U^{\tau}_{k}\|^2 
		&= \|U^{\tau}_{k+1}\|^2
		- 2\tau \|U^{\tau}_{k+1}\|^2 \hat{Q}(X^{\tau}_{k+1},\xi^{\tau}_{k+1})
		+ \|U^{\tau}_{k}-U^{\tau}_{k+1}\|^2 \\
		&\leq \|U^{\tau}_{k+1}\|^2 \left( 1 - 2\tau \hat{Q}(X^{\tau}_{k+1},\xi^{\tau}_{k+1}) + \tau^2 \mathcal{H}(X^{\tau}_{k+1}) \right), 
	\end{align*}
	which, together with $\log (1+x) \leq x$ for $x>-1$, means that 
	\begin{equation*}
		\log \|U^{\tau}_{k}\|^2 
		\leq \log \|U^{\tau}_{k+1}\|^2 - 2\tau \hat{Q}(X^{\tau}_{k+1},\xi^{\tau}_{k+1}) +  \tau^2 \mathcal{H}(X^{\tau}_{k+1}). 
	\end{equation*}
	Summing up $k$ from $0$ to $N-1$ in the above estimate yields 
	\begin{equation}\label{logUtau}
		\frac{1}{N} \log \|U^{\tau}_{0}\| \leq \frac{1}{N} \log \|U^{\tau}_{N}\|
		- \frac{\tau}{N}\sum_{k=1}^{N}  \hat{Q}(X^{\tau}_{k},\xi^{\tau}_{k}) + \frac{\tau^2}{2N}\sum_{k=1}^{N}\mathcal{H}(X^{\tau}_{k}). 
	\end{equation}
	Combining the definition of $\lambda^{\tau}$ and \eqref{logUtau}, we can deduce that 
	\begin{equation*}
		\lambda^{\tau} = \lim\limits_{N \rightarrow \infty} \frac{1}{N\tau} \log \|U^{\tau}_{N}\| \geq \lim\limits_{N \rightarrow \infty} \frac{1}{N}\sum_{k=1}^{N}  \hat{Q}(X^{\tau}_{k},\xi^{\tau}_{k}) - \lim\limits_{N \rightarrow \infty} \frac{\tau}{2N}\sum_{k=1}^{N}\mathcal{H}(X^{\tau}_{k}). 
	\end{equation*}
	In view of Proposition \ref{Prop:ergodic}, by Birkhoff's almost-sure ergodic theorem we have 
	\begin{align*}
		\lambda^{\tau} &\geq \nu^{\tau}(\hat{Q}) - \frac{\tau}{2} \tilde{\nu}^{\tau}(\mathcal{H}). 
	\end{align*} 
	On the other hand, by recalling $\lambda = \nu(Q)$ from \eqref{FK formula}, it follows from \eqref{ergodic2} and \eqref{ergodic3} that 
	\begin{align*}
		\lambda^{\tau} &\geq \nu^{\tau}(\hat{Q}) - \frac{\tau}{2} \tilde{\nu}^{\tau}(\mathcal{H}) \\
		&\geq \mathbb{E}^{X^{\tau}_{0},\xi^{\tau}_{0}}[\hat{Q}(X^{\tau}_{k},\xi^{\tau}_{k})] - \bar{\kappa} \bar{\iota}^{k\tau} \bar{\Gamma}(X^{\tau}_{0},\xi^{\tau}_{0}) - \frac{\tau}{2} \tilde{\nu}^{\tau}(\mathcal{H}) \\ 
		&\geq \mathbb{E}^{X^{\tau}_{0},\xi^{\tau}_{0}}[\hat{Q}(X^{\tau}_{k},\xi^{\tau}_{k})] - \bar{\kappa} \bar{\iota}^{k\tau} \bar{\Gamma}(X^{\tau}_{0},\xi^{\tau}_{0}) \\
		&\quad - \mathbb{E}^{X_{0},\xi_{0}}[\hat{Q}(X_{t_k},\xi_{t_k})] + \lambda - \kappa \iota^{t_k} \Gamma(\gamma_{0},\psi_{0}) - \frac{\tau}{2} \tilde{\nu}^{\tau}(\mathcal{H}), 
	\end{align*}
	which implies \eqref{lambda tau} and completes the proof. 
\end{proof}

We denote $C_{t} = (c_{t}, s_{t})^{\top} = (\cos\xi_t,\sin\xi_t)^{\top}$. By \eqref{xi} we can calculate that 
\begin{equation*}
	\rmd C_{t} = G(X_{t},C_{t}) \rmd t, 
\end{equation*}
where $G:\mathbb{R}^4\rightarrow\mathbb{R}^2$ is given by $G(x,y,c,s)=(G_{1}(x,y,c,s),G_{2}(x,y,c,s))^{\top}$ with 
\begin{equation}\label{G}
	\begin{aligned}
		G_{1} &= - \beta s-2bs(x^2+y^2) - (2ac s ^2+bc ^2s-bs^3) (x^2-y^2) - (4bc s^2 -2ac^2s +2as^3) xy  \\
		G_{2} &= \beta c+2bc(x^2+y^2) + (2ac^2 s+bc^3-bcs^2) (x^2-y^2) + (4bc^2 s -2ac^3 +2acs^2) xy. 
	\end{aligned}
\end{equation}
In addition, we denote $C^{\tau}_{k} = (c^{\tau}_k, s^{\tau}_k)^{\top}$, which is computed in Appendix \ref{Appendix:ck and sk} (i.e.,\eqref{ck} and \eqref{sk}). 
According to Proposition \ref{Prop:lambda and lambda tau}, to achieve the positivity of $\lambda^{\tau}$, we additionally need the strong convergence of the backward Euler method in finite time interval $[0,T]$ for given $T=N\tau$ with $N\in\mathbb{N}$. We denote by $t_{k}=k\tau$, $k=0,1,2,...,N$ the gridpoints. 
Then we have the following strong convergence result, whose proof is given in Section \ref{Sec:proof}. 
\begin{prop}\label{Prop:strong convergence}
	For any $p\geq2$, it holds that 
	\begin{equation}\label{strong convergence}
		\lim\limits_{\tau\rightarrow0} \sup\limits_{0\leq k \leq N}\mathbb{E}\left[ \|X_{t_{k}}-X^{\tau}_{k}\|^{p} + \|C_{t_{k}}-C^{\tau}_{k}\|^{p} \right] = 0. 
	\end{equation}
\end{prop}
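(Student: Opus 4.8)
The plan is to reduce \eqref{strong convergence} to a pathwise error analysis for the backward Euler discretization of the random differential equation \eqref{RDE1}, to absorb the super-linear drift by a localization argument, and finally to transfer the convergence of $X$ to the angular variable $C$. The first step exploits the conjugacy: since $\hat{X}_{t_k}=X_{t_k}-\sigma Z^*(\theta_{t_k}\omega)$ and $\hat{X}^{\tau}_{k}=X^{\tau}_{k}-\sigma Z^*(\theta_{t_k}\omega)$ carry the same shift at the gridpoints, one has $X_{t_k}-X^{\tau}_{k}=\hat{X}_{t_k}-\hat{X}^{\tau}_{k}$, so it suffices to estimate the error between \eqref{RDE1} and its discretization \eqref{RDE2}. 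The decisive simplification is that the additive forcing $\gamma\sigma Z^*$ is integrated \emph{exactly} in \eqref{RDE2} through $\tilde{Z}^*_{k+1}(\omega)=\int_{t_k}^{t_{k+1}}Z^*(\theta_s\omega)\,\rmd s$, so it cancels in the error recursion and leaves only the drift. Writing $e_k:=X_{t_k}-X^{\tau}_{k}$ and subtracting the integrated form of \eqref{RDE1} from \eqref{RDE2} gives
\[
e_{k+1}-\tau\bigl(F(X_{t_{k+1}})-F(X^{\tau}_{k+1})\bigr)=e_k+R_{k+1},\qquad R_{k+1}=\int_{t_k}^{t_{k+1}}\bigl(F(X_s)-F(X_{t_{k+1}})\bigr)\,\rmd s,
\]
so that, after taking the inner product with $e_{k+1}$, the analysis is governed by the monotonicity of $F$ and by the local truncation term $R_{k+1}$.

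Next I would record the two structural ingredients that drive the recursion. First, uniform moment bounds: $\sup_{0\le t\le T}\mathbb{E}\|X_t\|^p<\infty$ follows from the dissipativity of $F$ (the term $-a\|X\|^2X$), and $\sup_{0\le k\le N}\mathbb{E}\|X^{\tau}_{k}\|^p<\infty$ uniformly in $\tau$ follows by raising to higher powers the pathwise a priori estimate already obtained in the proof of Theorem \ref{Thm:discrete attractor}, combined with \eqref{Z*} and the Gaussian tails of $Z^*$. Second, using the identity $(X-Y)\cdot(\|X\|^2X-\|Y\|^2Y)=\tfrac12(\|X\|^2+\|Y\|^2)\|X-Y\|^2+\tfrac12(\|X\|^2-\|Y\|^2)^2$ for the dissipative part of $F$ and the antisymmetry of the shear matrix for the shear part, I would derive the \emph{local} monotonicity bound $(X-Y)\cdot(F(X)-F(Y))\le(\alpha+C(\|X\|^2+\|Y\|^2))\|X-Y\|^2$ together with the polynomial growth $\|F(X)-F(Y)\|\le C(1+\|X\|^2+\|Y\|^2)\|X-Y\|$.

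Since only convergence (not a rate) is required, I would then localize. Let $k_R$ be the first index at which $\|X_{t_k}\|$ or $\|X^{\tau}_{k}\|$ exceeds $R$. On $\{k\le k_R\}$ the monotonicity constant is at most $\alpha+2CR^2$, so a discrete Gronwall inequality applied to $\|e_k\|^2$ — in which $R_{k+1}$ is estimated through the $\tfrac12$-H\"older regularity of $X$ and the polynomial growth of $F$ — yields $\sup_{k\tau\le T}\mathbb{E}\bigl[\|e_k\|^p\mathbf{1}_{\{k\le k_R\}}\bigr]\to0$ as $\tau\to0$ for each fixed $R$. The complementary event is controlled uniformly in $\tau$ by $\mathbb{E}\bigl[\|e_k\|^p\mathbf{1}_{\{k>k_R\}}\bigr]\le(\mathbb{E}\|e_k\|^{2p})^{1/2}\mathbb{P}(k_R<N)^{1/2}\le CR^{-q}$, using the moment bounds and Markov's inequality; letting first $\tau\to0$ and then $R\to\infty$ proves the first half of \eqref{strong convergence}. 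For the second half, $C_t$ solves the ODE $\rmd C_t=G(X_t,C_t)\,\rmd t$ with $\|C_t\|\equiv1$ and $G$ given by \eqref{G}, which is smooth and Lipschitz on bounded sets; since its discrete counterpart $C^{\tau}_{k}$ (computed in Appendix \ref{Appendix:ck and sk}) is the corresponding implicit discretization, a further localized Gronwall estimate reduces $\mathbb{E}\|C_{t_k}-C^{\tau}_{k}\|^p$ to the already established convergence of $\mathbb{E}\|X_{t_k}-X^{\tau}_{k}\|^p$ plus a vanishing truncation term.

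The main obstacle is the shear term. In the chaotic regime $b$ is large, and the cubic drift then fails to be globally one-sided Lipschitz, its monotonicity constant growing like $\|X\|^2+\|Y\|^2$. Consequently the naive Gronwall exponent $\int_0^T C\|X_s\|^2\,\rmd s$ is not usefully integrable, and the entire difficulty is concentrated in controlling this growth. The localization above, tied to the uniform-in-$\tau$ moment bounds, is precisely what circumvents it, at the price of producing convergence without an explicit rate — which is all that \eqref{strong convergence} demands.
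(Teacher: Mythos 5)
Your proposal is correct and follows essentially the same strategy as the paper's proof: uniform moment bounds for $X_t$ and $X^{\tau}_k$, localization on the event that both trajectories stay in a ball of radius $R$ (the paper's events $\varrho_{k,R}$ and stopping times $\vartheta_R,\vartheta^{\tau}_R$), a localized discrete Gronwall estimate whose truncation term is controlled by the $L^p$ temporal $\tfrac12$-H\"older regularity of $X$, control of the complement via Chebyshev/Markov and the moment bounds, and an analogous localized argument for $C_{t_k}-C^{\tau}_k$ using the remainder estimates of Appendix \ref{Appendix:ck and sk}. Your detour through the conjugated equations \eqref{RDE1}--\eqref{RDE2} is only cosmetic --- since the noise is additive, the Brownian increments cancel identically when one subtracts the integral form of \eqref{compact SDE} from \eqref{BEM}, which is what the paper does directly --- and your use of local one-sided Lipschitz (monotonicity) for the implicit drift term, in place of the paper's local Lipschitz bound \eqref{local Lipschitz}, is a minor variant that yields the same conclusion.
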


Propositions \ref{Prop:lambda and lambda tau} and \ref{Prop:strong convergence} provide an approach to guarantee the positivity of $\lambda^{\tau}$, which
%constitutes a key result of this paper. More precisely, combining the strong convergence of backward Euler method, and the result of $\lambda>0$ for large shear strength $b$ shown in Proposition \ref{Prop:positive lyp}, the inequality \eqref{lambda tau} ensures the positive numerical Lyapunov exponent, i.e., $\lambda^{\tau}>0$; see the proof of Theorem \ref{Thm:SRB} below. 
%The positivity of $\lambda^{\tau}$ would 
in turn imply the existence of an SRB measure for the discrete RDS. 
With these results in hand, we are now in a position to prove Theorem \ref{Thm:SRB}. 

\begin{proof}[Proof of Theorem \ref{Thm:SRB}]
	By Proposition \ref{Prop:positive lyp} we know that there exists a constant $b_{0}>0$, such that $\lambda>0$ whenever $b>b_{0}$.  
	Since $\iota, \bar{\iota}\in(0,1)$, for such $\lambda>0$, there exists an integer $N>0$ such that $\kappa \iota^{T} \Gamma(\gamma_{0},\psi_{0}) + \bar{\kappa} \bar{\iota}^{T} \bar{\Gamma}(X^{\tau}_{0},\xi^{\tau}_{0})<\frac{\lambda}{4}$ with $T=N\tau$. 
	By Proposition \ref{Prop:strong convergence}, for fixed constants $b\in(b_0,\infty)$ and $T>0$, there exists a constant $\tau_{0}\in(0,1)$ such that when $\tau<\tau_{0}$, 
	\begin{equation*}
		\sup\limits_{0\leq k \leq N} \left| \mathbb{E}^{X^{\tau}_{0},\xi^{\tau}_{0}}[\hat{Q}(X^{\tau}_{k},\xi^{\tau}_{k})] - \mathbb{E}^{X_{0},\xi_{0}}[\hat{Q}(X_{t_k},\xi_{t_k})] \right| < \frac{\lambda}{4}, 
	\end{equation*}
	and $\frac{\tau}{2} \tilde{\nu}^{\tau}(\mathcal{H})<\frac{\lambda}{4}$, where $\mathcal{H}$ are given in Proposition \ref{Prop:lambda and lambda tau}. 
	Hence according to the inequality \eqref{lambda tau}, we can deduce that 
	\begin{align*}
		\lambda &\leq \lambda^{\tau} +  \left| \mathbb{E}^{X^{\tau}_{0},\xi^{\tau}_{0}}[\hat{Q}(X^{\tau}_{N},\xi^{\tau}_{N})] - \mathbb{E}^{X_{0},\xi_{0}}[\hat{Q}(X_{T},\xi_{T})] \right| \\
		&\quad +\kappa \iota^{T} \Gamma(\gamma_{0},\psi_{0}) + \bar{\kappa} \bar{\iota}^{T} \bar{\Gamma}(X^{\tau}_{0},\xi^{\tau}_{0}) + \frac{\tau}{2} \tilde{\nu}^{\tau}(\mathcal{H}) \\
		&< \lambda^{\tau} +\frac{3\lambda}{4}, 
	\end{align*}
	which ensures the positivity of numerical Lyapunov exponent, i.e., $\lambda^{\tau} > \frac{\lambda}{4} >0$. 
	It follows from \eqref{BEM} that 
	\begin{equation*}
		\mathbb{P}\left( X^{\tau}_{k+1}\in B | X^{\tau}_{k}=x \right) \leq \mathbb{P}\left( x+\sigma\Delta W_{k+1}\in G(B) \right), 
	\end{equation*}
	where $B\in\mathcal{B}(\mathbb{R}^2)$ and the function $G=\text{id}_{\mathbb{R}^2}-\tau F$. 
	This implies that the transition probability of $X^{\tau}_{k}$ is absolutely continuous with respect to the Lebesgue measure on $\mathbb{R}^2$ since the distribution function of $\Delta W_{k+1}$ is absolutely continuous. 
	In addition, as shown in Theorem \ref{Thm:discrete attractor}, $\varphi_{\tau}$ has a random attractor. 
	Consequently, by using the result of \cite[Theorem B]{Ledrappier1988}, the sample measure of $\varphi_{\tau}$ is an SRB measure. 
\end{proof}

\section{Proof of propositions in Section \ref{Sec:discrete RDS}}\label{Sec:proof}
In this section we present the proof of Propositions \ref{Prop:ergodic} and \ref{Prop:strong convergence} in Section \ref{Sec:discrete RDS}. 
\subsection{Proof of Proposition \ref{Prop:ergodic}}
The proof relies on the verification of the Lyapunov condition and the minorization condition. 
\begin{proof}[Proof of Proposition \ref{Prop:ergodic}]
	We first give the expression of $\xi^{\tau}_{k}$. To facilitate this, we denote $c^{\tau}_{k}=\cos \xi^{\tau}_{k}$ and $s^{\tau}_{k}=\sin \xi^{\tau}_{k}$, which are determined by \eqref{ck} and \eqref{sk}, respectively. Then 
	\begin{align*}
		&\sin(\xi^{\tau}_{k} - \xi^{\tau}_{k+1})  
		=  s^{\tau}_{k+1}(c^{\tau}_{k+1}-c^{\tau}_{k}) - c^{\tau}_{k+1}(s^{\tau}_{k+1}-s^{\tau}_{k}) \\
		&= - \tau \Big( \beta+2b\|X^{\tau}_{k+1}\|^2 + \left(a\sin2\xi^{\tau}_{k+1}+b\cos2\xi^{\tau}_{k+1}\right) \left(|x^{\tau}_{k+1}|^2-|y^{\tau}_{k+1}|^2\right) \\
		&\quad + (2b\sin2\xi^{\tau}_{k+1}-2a\cos2\xi^{\tau}_{k+1}) x^{\tau}_{k+1} y^{\tau}_{k+1}  \Big) +s^{\tau}_{k+1}\mathcal{R}_{k+1}^{c}-c^{\tau}_{k+1}\mathcal{R}_{k+1}^{s}, 
	\end{align*} 
	where $\mathcal{R}_{k+1}^{c}$ and $\mathcal{R}_{k+1}^{s}$ are given by \eqref{Rkc} and \eqref{Rks}. 
	This means that $\xi^{\tau}_{k}$ is determined by 
	\begin{equation}\label{numerical xi}
		\begin{aligned}
			\xi^{\tau}_{k+1} &= \xi^{\tau}_{k} + \arcsin \Big\{ \tau \Big( \beta+2b\|X^{\tau}_{k+1}\|^2 + \left(a\sin2\xi^{\tau}_{k+1}+b\cos2\xi^{\tau}_{k+1}\right) \left(|x^{\tau}_{k+1}|^2-|y^{\tau}_{k+1}|^2\right) \\
			&\quad + (2b\sin2\xi^{\tau}_{k+1}-2a\cos2\xi^{\tau}_{k+1}) x^{\tau}_{k+1} y^{\tau}_{k+1}  \Big) -s^{\tau}_{k+1}\mathcal{R}_{k+1}^{c}+c^{\tau}_{k+1}\mathcal{R}_{k+1}^{s} \Big\}. 
		\end{aligned}
	\end{equation}
	It follows from $\left|\arcsin(x)\right| \leq 2|x|$ for $x \in [-1, 1]$ and \eqref{numerical xi} that 
	\begin{equation}\label{estimate xi}
		\begin{aligned}
			|\xi^{\tau}_{k+1} - \xi^{\tau}_{k}| 
			&\leq 2\tau \Big|   \beta+2b\|X^{\tau}_{k+1}\|^2 + \left(a\sin2\xi^{\tau}_{k+1}+b\cos2\xi^{\tau}_{k+1}\right) \left(|x^{\tau}_{k+1}|^2-|y^{\tau}_{k+1}|^2\right) \\
			&\quad + (2b\sin2\xi^{\tau}_{k+1}-2a\cos2\xi^{\tau}_{k+1}) x^{\tau}_{k+1} y^{\tau}_{k+1} \Big|  + 2\left|s^{\tau}_{k+1}\mathcal{R}_{k+1}^{c}-c^{\tau}_{k+1}\mathcal{R}_{k+1}^{s}\right|. 
		\end{aligned}
	\end{equation} 
	%Introduce the Lyapunov function $\bar{\Gamma}: \mathbb{R}^2\times\mathbb{R}/(\pi\mathbb{Z}) \rightarrow [1, \infty)$ defined by 
	%\begin{equation}\label{barGamma}
	%	\bar{\Gamma}(X,\xi) := \|X\|^2+\xi^2+M
	%\end{equation}
	%for any $M\geq1$, 
	
	We next verify the Lyapunov condition with the Lyapunov function chosen as $\bar{\Gamma}^{p}$ for $p\geq1$, where $\bar{\Gamma}$ is defined by \eqref{barGamma}. 
	Combining \eqref{BEM} and \eqref{numerical xi} yields 
	\begin{equation}\label{discrete system}
		\begin{cases}
			X^{\tau}_{k} + \sigma \Delta W_{k+1} = 	X^{\tau}_{k+1} -  \tau \left( \begin{bmatrix}
				\alpha & -\beta \\ \beta & \alpha 
			\end{bmatrix} X^{\tau}_{k+1} 
			+ \|X^{\tau}_{k+1}\|^2 \begin{bmatrix}
				-a & -b \\ b & -a 
			\end{bmatrix} X^{\tau}_{k+1} \right),	\\[1em]
			\xi^{\tau}_{k} = \xi^{\tau}_{k+1} - \arcsin\Big\{ \tau \Big( \beta+2b\|X^{\tau}_{k+1}\|^2 + (2b\sin2\xi^{\tau}_{k+1}-2a\cos2\xi^{\tau}_{k+1}) x^{\tau}_{k+1} y^{\tau}_{k+1}  \\
			\qquad + \left(a\sin2\xi^{\tau}_{k+1}+b\cos2\xi^{\tau}_{k+1}\right) \left(|x^{\tau}_{k+1}|^2-|y^{\tau}_{k+1}|^2\right)   \Big) +s^{\tau}_{k+1}\mathcal{R}_{k+1}^{c}-c^{\tau}_{k+1}\mathcal{R}_{k+1}^{s} \Big\}. 
		\end{cases}
	\end{equation}
	Applying the function $\bar{\Gamma}^{p}$ to \eqref{discrete system}, then by Taylor's expansion, $|\xi^{\tau}_{k}|\leq\pi$, and \eqref{estimate xi}, we have 
	\begin{align*}
		&\bar{\Gamma}^{p}(X^{\tau}_{k} + \sigma \Delta W_{k+1},\xi^{\tau}_{k}) \\
		%	&\geq \bar{\Gamma}^{p}(x^{\tau}_{k+1}  , y^{\tau}_{k+1} ,\xi^{\tau}_{k+1}) + p\bar{\Gamma}^{p-1}(x^{\tau}_{k+1}  , y^{\tau}_{k+1} ,\xi^{\tau}_{k+1}) \Big\{ 2x^{\tau}_{k+1} (x^{\tau}_{k}+\sigma \Delta W_{k+1}^1-x^{\tau}_{k+1}) \\
		%	&\quad + 2y^{\tau}_{k+1} (y^{\tau}_{k}+\sigma \Delta W_{k+1}^2-y^{\tau}_{k+1}) + 2\xi^{\tau}_{k+1} (\xi^{\tau}_{k}-\xi^{\tau}_{k+1})  \Big\}  \\
		%	&\geq \bar{\Gamma}^{p}(x^{\tau}_{k+1}  , y^{\tau}_{k+1} ,\xi^{\tau}_{k+1}) + p\bar{\Gamma}^{p-1}(x^{\tau}_{k+1}  , y^{\tau}_{k+1} ,\xi^{\tau}_{k+1})  \left( - 2\alpha\tau\|X^{\tau}_{k+1}\|^2 
		%	+ 2a\tau\|X^{\tau}_{k+1}\|^4 
		%	-2\pi |\xi^{\tau}_{k+1}-\xi^{\tau}_{k}| \right)  \\
		\geq \ &  \bar{\Gamma}^{p}(X^{\tau}_{k+1},\xi^{\tau}_{k+1}) + p\bar{\Gamma}^{p-1}(X^{\tau}_{k+1},\xi^{\tau}_{k+1}) \Big( -2\alpha\tau\|X^{\tau}_{k+1}\|^2 + 2a\tau\|X^{\tau}_{k+1}\|^4  \\
		& -4\pi\beta\tau - 8\pi b \tau\|X^{\tau}_{k+1}\|^2 - 8\pi\tau(a+b)\|X^{\tau}_{k+1}\|^2
		- 4\pi (|\mathcal{R}_{k+1}^{c}|+|\mathcal{R}_{k+1}^{s}|) \Big) \\
		=: \ &  \bar{\Gamma}^{p}(X^{\tau}_{k+1},\xi^{\tau}_{k+1}) + \tilde{\Gamma}_{k+1}, 
	\end{align*}
	where, by Young's inequality, \eqref{estimate R}, and $|\xi^{\tau}_{k}|\leq\pi$, 
	\begin{align*}
		\tilde{\Gamma}_{k+1} 
		&\geq  p\bar{\Gamma}^{p-1}(X^{\tau}_{k+1},\xi^{\tau}_{k+1}) \left(  a\tau\|X^{\tau}_{k+1}\|^4 - \tau C(\alpha,\beta,a,b,\sigma)  \right)   \\ 
		&\geq  p\bar{\Gamma}^{p-1}(X^{\tau}_{k+1},\xi^{\tau}_{k+1}) \left(  \frac{a\tau}{2}\bar{\Gamma}^{2}(X^{\tau}_{k+1},\xi^{\tau}_{k+1}) - a\tau\left(|\xi^{\tau}_{k+1}|^2+M\right)^2 - \tau C(\alpha,\beta,a, b,\sigma)  \right)   \\ 
		%	&\geq  \frac{ap\tau}{2}\bar{\Gamma}^{p}(x^{\tau}_{k+1}  , y^{\tau}_{k+1} ,\xi^{\tau}_{k+1})  - \tau C(p, a, b, \alpha, \beta, \sigma) \bar{\Gamma}^{p-1}(x^{\tau}_{k+1}  , y^{\tau}_{k+1} ,\xi^{\tau}_{k+1})     \\ 
		&\geq  \frac{a\tau}{2}\bar{\Gamma}^{p}(X^{\tau}_{k+1},\xi^{\tau}_{k+1})  - \tau C(p,\alpha,\beta,a,b,\sigma). 
	\end{align*}
	Consequently, we obtain 
	\begin{equation}\label{Lyapunov condition 1}
		\left(1+\frac{a\tau}{2}\right) \bar{\Gamma}^{p}(X^{\tau}_{k+1},\xi^{\tau}_{k+1}) \leq  \bar{\Gamma}^{p}(X^{\tau}_{k} + \sigma \Delta W_{k+1},\xi^{\tau}_{k}) + \tau C(p,\alpha,\beta,a, b,\sigma) 
	\end{equation}
	for some deterministic positive real numbers $C$ uniformly with respect to $\tau$ and $k$. 
	
	By binomial expansion, $\mathbb{E}[\Delta W_{k+1}]=0$, and $\mathbb{E}[\|\Delta W_{k+1}\|^2]=2\tau$, for deterministic $Y\in\mathbb{R}^2$ and $\zeta \in \mathbb{R}$, we have that for $p\geq2$, 
	\begin{align*}
		&\mathbb{E}\left[\bar{\Gamma}^p(Y+\sigma\Delta W_{k+1},\zeta)\right] \\
		%	&= \mathbb{E} \left[  (x^2+y^2+\zeta^2+1+2\sigma x\Delta W_{k+1}^1+2\sigma y\Delta W_{k+1}^2+\sigma^2(\Delta W_{k+1}^1)^2+\sigma^2(\Delta W_{k+1}^2)^2)^p  \right] \\
		%	&= \mathbb{E} \left[ \sum_{i=0}^{p} \binom{p}{i} (x^2+y^2+\zeta^2+1)^{p-i} (2\sigma x\Delta W_{k+1}^1+2\sigma y\Delta W_{k+1}^2+\sigma^2(\Delta W_{k+1}^1)^2+\sigma^2(\Delta W_{k+1}^2)^2)^{i}  \right] \\
		= \ &\mathbb{E} \left[\bar{\Gamma}^{p}(Y,\zeta)\right] 
		+ p \mathbb{E} \left[\bar{\Gamma}^{p-1}(Y,\zeta) \left(2\sigma Y\cdot\Delta W_{k+1}+\sigma^2\|\Delta W_{k+1}\|^2\right)\right] \\ 
		&\quad + \mathbb{E} \left[ \sum_{i=2}^{p} \binom{p}{i} \bar{\Gamma}^{p-i}(Y,\zeta) \left(2\sigma Y\cdot\Delta W_{k+1}+\sigma^2\|\Delta W_{k+1}\|^2\right)^{i}  \right] \\
		\leq \ & \bar{\Gamma}^{p}(Y,\zeta) 
		+ 2\sigma^2p\tau  \bar{\Gamma}^{p-1}(Y,\zeta)  \\ 
		& + \sum_{i=2}^{p} \binom{p}{i} \bar{\Gamma}^{p-i}(Y,\zeta) 2^{i-1} \left(|2\sigma|^i \mathbb{E}\left[|Y\cdot\Delta W_{k+1}|^i\right]+|\sigma|^{2i} \mathbb{E}\left[\|\Delta W_{k+1}\|^{2i}\right] \right). 
	\end{align*}
	It follows from Cauchy--Schwarz's inequality, Young's inequality, and the general $p$th moment of Gaussian random variables that  
	\begin{equation*}
		\mathbb{E}\left[|Y\cdot\Delta W_{k+1}|^i\right] 
		\leq \epsilon\tau \|Y\|^{2i} + \frac{1}{4\epsilon\tau} \mathbb{E}\left[\|\Delta W_{k+1}\|^{2i}\right] 
		\leq \epsilon\tau \bar{\Gamma}^{i}(Y,\zeta) + \frac{2^{i}(2i-1)!!}{4\epsilon\tau} \tau^{i}, 
	\end{equation*}
	which leads to 
	\begin{align*}
		&\mathbb{E}\left[\bar{\Gamma}^p(Y+\sigma\Delta W_{k+1},\zeta)\right] \\
		\leq \ & \bar{\Gamma}^{p}(Y,\zeta) 
		+ 2\sigma^2p\tau  \bar{\Gamma}^{p-1}(Y,\zeta) 
		+ \sum_{i=2}^{p} \binom{p}{i} 2^{2i-1}\sigma^i \left( \epsilon\tau\bar{\Gamma}^{p}(Y,\zeta) + \frac{(2\tau)^{i-1}(2i-1)!!}{2\epsilon}  \bar{\Gamma}^{p-i}(Y,\zeta) \right) \\ 
		&+ \sum_{i=2}^{p} \binom{p}{i} 2^{2i-1} \sigma^{2i}(2i-1)!! \tau^{i} \bar{\Gamma}^{p-i}(Y,\zeta) \\ 
		\leq \ & (1+\epsilon\tau) \bar{\Gamma}^{p}(Y,\zeta) + C(p, \epsilon, \sigma)\tau. 
	\end{align*}
	Since $X^{\tau}_{k}$, $\xi^{\tau}_{k}$ are measurable with respect to $\mathcal{F}_{t_{k}}:=\sigma(W_{t}, t\leq t_{k})$ and $\Delta W_{k+1}$ are independent of $\mathcal{F}_{t_{k}}$, by \eqref{Lyapunov condition 1} and the property of the conditional expectation, we obtain 
	\begin{align*}
		&\left(1+\frac{a\tau}{2}\right) \mathbb{E}\left[\bar{\Gamma}^{p}(X^{\tau}_{k+1},\xi^{\tau}_{k+1}) | \mathcal{F}_{t_{k}}\right] \\
		\leq \ & \mathbb{E}\left[ \bar{\Gamma}^{p}(X^{\tau}_{k} + \sigma \Delta W_{k+1},\xi^{\tau}_{k}) | \mathcal{F}_{t_{k}}\right]  + \tau C(p,\alpha,\beta,a,b,\sigma) \\
		=\ &\mathbb{E}\left[\bar{\Gamma}^p(Y+\sigma\Delta W_{k+1},\zeta)\right]\Big|_{Y=X^{\tau}_{k},\zeta=\xi^{\tau}_{k}}  + \tau C(p,\alpha,\beta,a,b,\sigma) \\
		\leq \ &(1+\epsilon\tau) \bar{\Gamma}^{p}(X^{\tau}_{k},\xi^{\tau}_{k}) + \tau C(p,\epsilon,\alpha,\beta, a,b,\sigma). 
	\end{align*}
	By taking $\epsilon=\frac{a}{8}$, for $\tau<\frac{4}{a}$, we have 
	\begin{equation}\label{Lyapunov condition on xk}
		\mathbb{E}\left[\bar{\Gamma}^{p}(X^{\tau}_{k+1},\xi^{\tau}_{k+1}) | \mathcal{F}_{t_{k}}\right] 
		\leq \frac{1}{(1+\tfrac{a\tau}{4})} \bar{\Gamma}^{p}(X^{\tau}_{k},\xi^{\tau}_{k}) + \tau C(p,\alpha,\beta,a,b,\sigma). 
	\end{equation}
	In addition, it is evident that $\lim_{(X,\xi)\rightarrow \infty} \bar{\Gamma}^{p}(X,\xi) = \infty$. 
	Hence the Lyapunov condition holds for $\{(X^{\tau}_k,\xi^{\tau}_k): k\geq0\}$ when $\tau<\frac{4}{a}$. 
	By an analogous derivation one can verify the Lyapunov condition for $\{X^{\tau}_k: k\geq0\}$. 
	
	On the other hand, as shown in Appendix \ref{Appendix:minorization}, the process $\{(X^{\tau}_k,\xi^{\tau}_k): k\geq0\}$ satisfies the minorization condition when $b>4a$ and $\tau<\min\{\frac{a}{1+4b|\beta|},\frac{1}{1+12|\beta|},\frac{1}{1+4|\alpha|}\}$. 
	In addition, the minorization condition for $\{X^{\tau}_k: k\geq0\}$ follows from the fact that the driven noise for $X^{\tau}_k$ is non-degenerate when $\sigma>0$. 	
	Thus by \cite[Theorem 2.5]{Mattingly2002} we obtain the desired conclusion. 
%	 there exists a unique stationary measure $\nu^{\tau}$ for $\{(X^{\tau}_k,\xi^{\tau}_k): k\geq0\}$, with its marginal $\tilde{\nu}^{\tau}$ for $\{X^{\tau}_k: k\geq0\}$. 
\end{proof}

%We first give the expression of $\xi^{\tau}_{k}$. To this end, we introduce $c^{\tau}_{k}=\cos \xi^{\tau}_{k}$ and $s^{\tau}_{k}=\sin \xi^{\tau}_{k}$, which are determined by \eqref{ck} and \eqref{sk}, respectively. Then we have $\tan \xi^{\tau}_{k} = s^{\tau}_{k}/c^{\tau}_{k}$ and  
%\begin{align*}
%	&\sin(\xi^{\tau}_{k} - \xi^{\tau}_{k+1})  
%	=  s^{\tau}_{k+1}(c^{\tau}_{k+1}-c^{\tau}_{k}) - c^{\tau}_{k+1}(s^{\tau}_{k+1}-s^{\tau}_{k}) \\
%	&= - \tau \Big( \beta+2b\|X^{\tau}_{k+1}\|^2 + \left(a\sin2\xi^{\tau}_{k+1}+b\cos2\xi^{\tau}_{k+1}\right) \left(|x^{\tau}_{k+1}|^2-|y^{\tau}_{k+1}|^2\right) \\
%	&\quad + (2b\sin2\xi^{\tau}_{k+1}-2a\cos2\xi^{\tau}_{k+1}) x^{\tau}_{k+1} y^{\tau}_{k+1}  \Big) +s^{\tau}_{k+1}\mathcal{R}_{k+1}^{c}-c^{\tau}_{k+1}\mathcal{R}_{k+1}^{s}, 
%\end{align*} 
%where $\mathcal{R}_{k+1}^{c}$ and $\mathcal{R}_{k+1}^{s}$ are given by \eqref{Rkc} and \eqref{Rks}. 
%This means that $\xi^{\tau}_{k}$ are determined by 
%\begin{equation}\label{numerical xi}
%	\begin{aligned}
%	\xi^{\tau}_{k+1} &= \xi^{\tau}_{k} + \arcsin \Big\{ \tau \Big( \beta+2b\|X^{\tau}_{k+1}\|^2 + \left(a\sin2\xi^{\tau}_{k+1}+b\cos2\xi^{\tau}_{k+1}\right) \left(|x^{\tau}_{k+1}|^2-|y^{\tau}_{k+1}|^2\right) \\
%	&\quad + (2b\sin2\xi^{\tau}_{k+1}-2a\cos2\xi^{\tau}_{k+1}) x^{\tau}_{k+1} y^{\tau}_{k+1}  \Big) -s^{\tau}_{k+1}\mathcal{R}_{k+1}^{c}+c^{\tau}_{k+1}\mathcal{R}_{k+1}^{s} \Big\}. 
%\end{aligned}
%\end{equation}

\subsection{Proof of Proposition \ref{Prop:strong convergence}}
In this subsection we prove the strong convergence of the backward Euler method in finite time interval $[0,T]$. Recall that $t_{k}=k\tau$, $k=0,1,2,...,N$ are gridpoints. 
In the following we omit the dependence of constant $C$ on the coefficients of SDE \eqref{compact SDE}. 
We first give the moment boundedness of $X_t$ and $X^{\tau}_{k}$. 
\begin{lemma}
	For any $p\geq1$, it holds that 
	\begin{equation}\label{bound Xt} 
		\mathbb{E}\left[\sup\limits_{t\in[0,T]}\|X_t\|^{2p} \right]
		\leq  C(p, T). 
	\end{equation}  
\end{lemma}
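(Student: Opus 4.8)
The plan is to exploit the strong dissipativity built into the drift $F$. The structural fact that drives everything is that the rotational part contributes nothing to the radial dynamics: writing $X=(x,y)^{\top}$ and using the antisymmetry of the off-diagonal entries, a direct computation gives
\[
	X\cdot F(X) = \alpha\|X\|^2 - a\|X\|^4, \qquad \forall\, X\in\mathbb{R}^2,
\]
so for $a>0$ the drift is coercive and pushes trajectories inward at rate $\|X\|^4$. First I would fix $p\geq1$ and apply It\^o's formula to the smooth function $V(x):=(1+\|x\|^2)^p$. Denoting by $\mathcal{L}=F(x)\cdot\nabla+\tfrac{\sigma^2}{2}\Delta$ the generator of \eqref{compact SDE}, the identity above yields
\[
	\mathcal{L}V(x) = 2p(1+\|x\|^2)^{p-1}\!\left(\alpha\|x\|^2-a\|x\|^4\right) + \sigma^2\!\left(2p(1+\|x\|^2)^{p-1}+2p(p-1)(1+\|x\|^2)^{p-2}\|x\|^2\right).
\]
Since the $-a\|x\|^4$ term dominates as $\|x\|\to\infty$, there are constants $c_1,c_2>0$ (depending on $p$ and the coefficients of \eqref{compact SDE}) with $\mathcal{L}V(x)\leq c_1-c_2 V(x)$ for all $x\in\mathbb{R}^2$.

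Next I would convert this pointwise drift bound into a moment bound. Because $F$ is only locally (not globally) Lipschitz, I introduce the stopping times $\tau_n:=\inf\{t\geq0:\|X_t\|\geq n\}$, apply It\^o's formula to $V(X_{t\wedge\tau_n})$, and take expectations so that the stochastic integral against $\rmd W$ vanishes. This gives $\tfrac{\rmd}{\rmd t}\mathbb{E}[V(X_{t\wedge\tau_n})]\leq c_1-c_2\,\mathbb{E}[V(X_{t\wedge\tau_n})]$, and Gronwall's inequality together with $V(X_0)<\infty$ produces $\mathbb{E}[V(X_{t\wedge\tau_n})]\leq V(X_0)+c_1/c_2$ uniformly in $n$ and $t$. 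Letting $n\to\infty$ via Fatou's lemma confirms non-explosion and the pointwise moment bound
\[
	\sup_{t\in[0,T]}\mathbb{E}\big[\|X_t\|^{2p}\big] \leq \sup_{t\in[0,T]}\mathbb{E}\big[V(X_t)\big] \leq C(p), \qquad \text{for every } p\geq1.
\]

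Finally, to move the supremum inside the expectation I would invoke the Burkholder--Davis--Gundy (BDG) inequality. From It\^o's formula and $\mathcal{L}V\leq c_1$ one has $V(X_t)\leq V(X_0)+c_1 t+M_t$, where $M_t=\sigma\int_0^t\nabla V(X_s)\cdot\rmd W_s$ is a local martingale with $\langle M\rangle_t=\sigma^2\int_0^t\|\nabla V(X_s)\|^2\,\rmd s$. Taking the supremum over $[0,T]$ and applying BDG gives
\[
	\mathbb{E}\Big[\sup_{t\in[0,T]}V(X_t)\Big] \leq V(X_0)+c_1T+C\,\mathbb{E}\Big[\Big(\sigma^2\!\int_0^T\|\nabla V(X_s)\|^2\,\rmd s\Big)^{1/2}\Big].
\]
Since $\|\nabla V(x)\|^2=4p^2(1+\|x\|^2)^{2p-2}\|x\|^2\leq 4p^2(1+\|x\|^2)^{2p-1}$, the right-hand side is controlled by Cauchy--Schwarz together with the moment bound of the previous step applied with $p$ replaced by $2p-1$, giving $\mathbb{E}[\sup_{t\in[0,T]}\|X_t\|^{2p}]\leq C(p,T)$. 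The only delicate point is the localization in this last step: one must first establish the finiteness of the BDG bound along the stopped processes $X_{\cdot\wedge\tau_n}$ and then pass to the limit, which is legitimate precisely because uniform moment bounds of \emph{every} order are already available from the Gronwall step. I expect this interplay---using coercivity to secure uniform moment bounds of all orders and then feeding the higher-order bound into the BDG estimate---to be the crux, whereas the individual computations are entirely routine.
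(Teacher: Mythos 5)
Your proposal is correct and follows essentially the same route as the paper's proof: exploit the dissipativity $X\cdot F(X)=\alpha\|X\|^2-a\|X\|^4$ via It\^o's formula to get pointwise-in-$t$ moment bounds of all orders, then move the supremum inside the expectation with the Burkholder--Davis--Gundy inequality, feeding in the higher moment of order $4p-2$. The only differences are cosmetic --- you use the Lyapunov function $(1+\|x\|^2)^p$ with an explicit stopping-time localization where the paper works directly with $\|x\|^{2p}$ and an integrating factor $e^{2at}$ --- and your added care with localization is, if anything, a small improvement in rigor.
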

\begin{proof}
	Applying It\^o's formula and Young's inequality, we obtain 
	\begin{equation}\label{moment bound 1}
		\begin{aligned}
			\rmd \|X_t\|^{2p} 
			&= p\|X_t\|^{2(p-1)} \left( 2X_t\cdot F(X_t)+2p\sigma^2 \right) \rmd t 
			+ 2p\sigma\|X_t\|^{2(p-1)} X_t\cdot\rmd W_t  \\
			&\leq \left(-2a \|X_t\|^{2p} +C(p)\right) \rmd t + 2p\sigma\|X_t\|^{2(p-1)} X_t\cdot\rmd W_t, 
		\end{aligned}
	\end{equation}
	which means that 
	\begin{equation*}
		\rmd (e^{2at}\|X_t\|^{2p}) \leq e^{2at}C(p) \rmd t + 2p\sigma e^{2at}\|X_t\|^{2(p-1)} X_t\cdot\rmd W_t. 
	\end{equation*}
	Hence for any $t\in[0,T]$, 
	\begin{equation}\label{moment bound 2}
		\mathbb{E}[\|X_t\|^{2p}] \leq e^{-2at} \mathbb{E}[\|X_0\|^{2p}] + \frac{C(p)}{2a}.  
	\end{equation}
	In addition, it follows from \eqref{moment bound 1} that 
	\begin{equation*} 
		\mathbb{E}\left[\sup\limits_{t\in[0,T]}\|X_t\|^{2p} \right]
		\leq \mathbb{E}[\|X_0\|^{2p}] + C(p)T
		+ \mathbb{E}\left[\sup\limits_{t\in[0,T]}\int_{0}^{t} 2p\sigma\|X_s\|^{2p-2} X_s \cdot \rmd W_s \right]. 
	\end{equation*}
	Adopting Burkholder--Davis--Gundy's inequality, we have 
	\begin{equation*}
		\mathbb{E}\left[\sup\limits_{t\in[0,T]}\int_{0}^{t} 2p\sigma\|X_s\|^{2p-2} X_s\cdot \rmd W_s \right]  
		\leq C(p) \left(\int_{0}^{T} \mathbb{E}\left[ \|X_s\|^{4p-2}\right] \rmd s \right)^{1/2}, 
	\end{equation*}
	which, together with \eqref{moment bound 2}, leads to the desired conclusion. 
\end{proof}

\begin{lemma}
	For any $p\geq1$ and $\tau<\frac{4}{a}$, it holds that 
	\begin{equation}\label{bound Xk}
		\mathbb{E}\left[\sup\limits_{0\leq k \leq N}\|X^{\tau}_{k}\|^{2p}\right] \leq C(p, T). 
	\end{equation}
\end{lemma}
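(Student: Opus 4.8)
The plan is to reproduce the continuous-time estimate \eqref{bound Xt} at the discrete level, and the one genuine difficulty is that the backward Euler update \eqref{BEM} is implicit, so the noise enters \emph{anticipatingly} through $X^{\tau}_{k+1}$: taking the inner product of \eqref{BEM} with $X^{\tau}_{k+1}$ directly produces a term $\Delta W_{k+1}\cdot X^{\tau}_{k+1}$ that is not a martingale increment. The device that resolves this is to expose the \emph{explicit} combination $X^{\tau}_{k}+\sigma\Delta W_{k+1}$ on the right-hand side. Writing $X^{\tau}_{k+1}=(X^{\tau}_{k}+\sigma\Delta W_{k+1})+\tau F(X^{\tau}_{k+1})$, expanding $\|X^{\tau}_{k+1}\|^2$, and using the dissipativity $X\cdot F(X)=\alpha\|X\|^2-a\|X\|^4$ read off from \eqref{nonlinearity}, one obtains the exact identity
\[
(1-2\alpha\tau)\|X^{\tau}_{k+1}\|^2+2a\tau\|X^{\tau}_{k+1}\|^4+\tau^2\|F(X^{\tau}_{k+1})\|^2=\|X^{\tau}_{k}+\sigma\Delta W_{k+1}\|^2.
\]
Dropping the two nonnegative terms on the left (the quartic and $\|F\|^2$ contributions, which are not needed on a finite horizon) and using that $1-2\alpha\tau\geq\tfrac12$ for $\tau$ under the solvability restriction of Theorem \ref{Thm:discrete RDS}, this yields the pathwise recursion
\[
\|X^{\tau}_{k+1}\|^2\leq\frac{1}{1-2\alpha\tau}\Bigl(\|X^{\tau}_{k}\|^2+2\sigma X^{\tau}_{k}\cdot\Delta W_{k+1}+\sigma^2\|\Delta W_{k+1}\|^2\Bigr).
\]
The decisive point is that $X^{\tau}_{k}$ is $\mathcal{F}_{t_k}$-measurable while $\Delta W_{k+1}$ is independent of $\mathcal{F}_{t_k}$, so $X^{\tau}_{k}\cdot\Delta W_{k+1}$ is now a genuine martingale increment.

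Second, I would establish the moments in expectation, namely $\sup_{0\leq k\leq N}\mathbb{E}[\|X^{\tau}_{k}\|^{2p}]\leq C(p,T)$, by raising the recursion to the $p$-th power. Setting $R_{k+1}:=2\sigma X^{\tau}_{k}\cdot\Delta W_{k+1}+\sigma^2\|\Delta W_{k+1}\|^2$, a binomial expansion of $(\|X^{\tau}_{k}\|^2+R_{k+1})^p$ together with the Gaussian moment identities $\mathbb{E}[\Delta W_{k+1}\,|\,\mathcal{F}_{t_k}]=0$ and $\mathbb{E}[\|\Delta W_{k+1}\|^{2m}\,|\,\mathcal{F}_{t_k}]=C_m\tau^m$ and Young's inequality gives the one-step conditional bound $\mathbb{E}[\|X^{\tau}_{k+1}\|^{2p}\,|\,\mathcal{F}_{t_k}]\leq(1+C\tau)\|X^{\tau}_{k}\|^{2p}+C\tau$, since the odd-order increments drop out under the conditional expectation and every surviving correction carries at least one factor of $\tau$. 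Taking expectations and applying the discrete Gronwall lemma over $k\tau\leq T$ then produces $\sup_{k\leq N}\mathbb{E}[\|X^{\tau}_{k}\|^{2p}]\leq C(p,T)$ for every $p\geq1$, exactly in the spirit of \eqref{moment bound 2}.

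Finally, to move the supremum inside the expectation I would retain the martingale increment explicitly. Splitting off $\mathcal{M}_{k+1}:=2p\sigma\|X^{\tau}_{k}\|^{2(p-1)}X^{\tau}_{k}\cdot\Delta W_{k+1}$ (whose conditional mean vanishes) and collecting the remaining terms into a remainder $\mathcal{R}_{k+1}$ with $\mathbb{E}[|\mathcal{R}_{k+1}|]\leq C\tau(1+\mathbb{E}[\|X^{\tau}_{k}\|^{2p}])$, unrolling $\|X^{\tau}_{k+1}\|^{2p}\leq(1+C\tau)(\|X^{\tau}_{k}\|^{2p}+\mathcal{M}_{k+1}+\mathcal{R}_{k+1})$ and bounding the geometric weights by $e^{CT}$ gives
\[
\mathbb{E}\Bigl[\sup_{0\leq n\leq N}\|X^{\tau}_{n}\|^{2p}\Bigr]\leq e^{CT}\mathbb{E}[\|X^{\tau}_{0}\|^{2p}]+e^{CT}\,\mathbb{E}\Bigl[\sup_{0\leq n\leq N}\Bigl|\sum_{k=0}^{n-1}\mathcal{M}_{k+1}\Bigr|\Bigr]+e^{CT}\sum_{k=0}^{N-1}\mathbb{E}[|\mathcal{R}_{k+1}|].
\]
The remainder sum is $O(T)$ by the moment bounds of the second step. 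For the martingale term, after a routine reweighting the discrete Burkholder--Davis--Gundy inequality bounds its predictable quadratic variation by $C\tau\sum_{k}\|X^{\tau}_{k}\|^{4p-2}\leq C\tau\bigl(\sup_{k\leq N}\|X^{\tau}_{k}\|^{2p}\bigr)\sum_{k}\|X^{\tau}_{k}\|^{2p-2}$; a further application of Young's inequality absorbs a small multiple of $\mathbb{E}[\sup_{k\leq N}\|X^{\tau}_{k}\|^{2p}]$ into the left-hand side, while the complementary factor $\tau\sum_{k}\mathbb{E}[\|X^{\tau}_{k}\|^{2p-2}]\leq T\,C(p-1,T)$ is finite by the second step, and rearranging closes the estimate. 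I expect the main obstacle to be precisely the anticipating noise of the implicit scheme; once the identity above converts $\Delta W_{k+1}\cdot X^{\tau}_{k+1}$ into the adapted increment $\Delta W_{k+1}\cdot X^{\tau}_{k}$, the remaining steps are the discrete analogues of the continuous-time argument behind \eqref{bound Xt}.
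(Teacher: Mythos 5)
Your proposal is correct, and its key device is the same one the paper uses: restoring adaptedness by exposing $X^{\tau}_{k}+\sigma\Delta W_{k+1}$. Your exact identity $\|X^{\tau}_{k}+\sigma\Delta W_{k+1}\|^2=(1-2\alpha\tau)\|X^{\tau}_{k+1}\|^2+2a\tau\|X^{\tau}_{k+1}\|^4+\tau^2\|F(X^{\tau}_{k+1})\|^2$ is the equality form of the paper's inequality $\|X^{\tau}_{k}+\sigma\Delta W_{k+1}\|^2\geq(1+a\tau)\|X^{\tau}_{k+1}\|^2-C\tau$, obtained there by pairing \eqref{discrete system} with $X^{\tau}_{k+1}$ and keeping the quartic term via Young; you discard the quartic and $\|F\|^2$ terms and accept the expansion factor $(1-2\alpha\tau)^{-1}$, which loses the uniform-in-$k$ contraction but is harmless since \eqref{bound Xk} only claims $C(p,T)$. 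After that the executions genuinely diverge: the paper imports the one-time moments from the Lyapunov estimate \eqref{Lyapunov condition on xk} proved within Proposition \ref{Prop:ergodic}, telescopes at the level of $\|X\|^2$, raises the resulting sum to the $p$-th power, and applies Burkholder--Davis--Gundy directly; you prove the one-time moments self-containedly (conditional binomial expansion plus discrete Gronwall) and treat the supremum by unrolling the $2p$-power recursion, applying BDG at exponent one to the martingale with increments $\|X^{\tau}_{k}\|^{2(p-1)}X^{\tau}_{k}\cdot\Delta W_{k+1}$, and absorbing $\epsilon\,\mathbb{E}[\sup_{k\leq N}\|X^{\tau}_{k}\|^{2p}]$ by Young---a rearrangement that is legitimate here because the supremum runs over finitely many steps, each with finite $2p$-th moment by your second step. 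Two minor points: $1-2\alpha\tau\geq\tfrac12$ comes from the solvability restriction $\tau<\frac{1}{1+4|\alpha|}$, which the paper also assumes implicitly beyond the stated $\tau<\frac{4}{a}$; and your binomial expansions should be stated for integer $p$, with general $p\geq1$ recovered by Jensen, exactly as the paper does at the end of its proof. Your route buys independence from the ergodicity machinery of Proposition \ref{Prop:ergodic}; the paper's route buys a shorter sup-estimate, since raising the telescoped sum to the $p$-th power avoids the absorption argument entirely.
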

\begin{proof}
	From \eqref{Lyapunov condition on xk} we know that  for $p\geq2$ and $\tau<\frac{4}{a}$, 
	\begin{equation*}
		\mathbb{E}\left[\bar{\Gamma}^{p}(X^{\tau}_{k+1},\xi^{\tau}_{k+1})\right] 
		\leq \frac{4}{(4+a\tau)} \mathbb{E}\left[\bar{\Gamma}^{p}(X^{\tau}_{k},\xi^{\tau}_{k})\right] + C(p)\tau, 
	\end{equation*}
	which means that  
	\begin{equation}\label{numerical bound1}
		\mathbb{E}[\|X^{\tau}_{k}\|^{2p}] \leq 
		\mathbb{E}\left[\bar{\Gamma}^{p}(X^{\tau}_{k},\xi^{\tau}_{k})\right] 
		\leq \left(\frac{4}{4+a\tau}\right)^{k} \mathbb{E}\left[\bar{\Gamma}^{p}(X^{\tau}_{0},\xi^{\tau}_{0})\right] + C(p). 
	\end{equation}
    By using 
	\begin{equation*}
		\frac{1}{2} \|X^{\tau}_{k} + \sigma \Delta W_{k+1}\|^2 + \frac{1}{2} \|X^{\tau}_{k+1}\|^2 \geq (X^{\tau}_{k} + \sigma \Delta W_{k+1}) \cdot X^{\tau}_{k+1}, 
	\end{equation*}
	Young's inequality, and \eqref{discrete system}, we know that  
	\begin{equation*}
			\|X^{\tau}_{k} + \sigma \Delta W_{k+1}\|^2
			\geq \|X^{\tau}_{k+1}\|^2 - 2\tau \left(  \alpha\|X^{\tau}_{k+1}\|^2  - a\|X^{\tau}_{k+1}\|^4 \right) 
			\geq (1+a\tau) \|X^{\tau}_{k+1}\|^2  - C\tau. 
	\end{equation*}
	Consequently, 
	\begin{equation*}
		\|X^{\tau}_{k}\|^{2} \leq \|X^{\tau}_{0}\|^{2} + 2\sigma \sum_{i=1}^{k} X^{\tau}_{i-1} \cdot \Delta W_{i} 
		+ \sigma^2 \sum_{i=1}^{k} \|\Delta W_{i}\|^2 + Ck\tau. 
	\end{equation*}
	Taking $p$-th power on both sides we obtain 
	\begin{align*}
		\|X^{\tau}_{k}\|^{2p} &\leq 4^{p-1} \left( \|X^{\tau}_{0}\|^{2p} + (2\sigma)^{p} \left|\sum_{i=1}^{k} X^{\tau}_{i-1} \cdot \Delta W_{i} \right|^{p} 
		+ \sigma^{2p} \left|\sum_{i=1}^{k} \|\Delta W_{i}\|^2 \right|^{p} + C(p)T^{p} \right). 
	\end{align*}
	By Burkholder--Davis--Gundy's inequality and \eqref{numerical bound1}, we have 
	\begin{align*}
		&\mathbb{E}\left[\sup\limits_{0\leq k \leq N}\left|\sum_{i=1}^{k} X^{\tau}_{i-1} \cdot \Delta W_{i}\right|^{p}\right] 
		\leq C(p) \mathbb{E}\left[ \left(\sum_{i=1}^{N} |x^{\tau}_{i-1}|^2 \tau  \right)^{\frac{p}{2}} + \left(\sum_{i=1}^{N} |y^{\tau}_{i-1}|^2 \tau  \right)^{\frac{p}{2}}\right] \\
		&\leq C(p) \tau^{\frac{p}{2}} N^{\frac{p}{2}-1}  \sum_{i=1}^{N}\mathbb{E}\left[ |x^{\tau}_{i-1}|^{p} + |y^{\tau}_{i-1}|^{p}\right] 
		\leq C(p) T^{\frac{p}{2}}. 
	\end{align*}
	Furthermore, for $j=1, 2$, 
	\begin{align*}
		\mathbb{E}\left[\sup\limits_{0\leq k \leq N}\left|\sum_{i=1}^{k} |\Delta W_{i}^j|^2\right|^{p}\right] 
		\leq N^{p-1}  \sum_{i=1}^{N}\mathbb{E}\left[ |\Delta W_{i}^j|^{2p}\right] 
		\leq C(p) N^{p-1}  \sum_{i=1}^{N} \tau^{p} 
		\leq C(p) T^{p}. 
	\end{align*}
	Thus \eqref{bound Xk} holds for $p\geq2$. 
	Finally, the case for $p\in[1,2)$ follows directly from Jensen's inequality.
\end{proof}

With these preparations in place, we are now ready to prove Proposition \ref{Prop:strong convergence}. 

\begin{proof}[Proof of Proposition \ref{Prop:strong convergence}]
	For $R>0$, we introduce two stopping times  
	\begin{equation*}
		\vartheta_R = \inf\{ t\geq0: \|X_t\| > R \}, \quad \vartheta^{\tau}_R = \inf\{ k\tau\geq0: \|X^{\tau}_{k} \| > R \}. 
	\end{equation*}
	By using Chebychev's inequality and \eqref{bound Xt}, we have  
	\begin{equation}\label{stopping time 1}
		\mathbb{P}\left( \vartheta_R\leq T \right) = \mathbb{E} \left[ \chi_{\{\vartheta_R\leq T\}} \frac{\|X_{\vartheta_R}\|^{p}}{R^{p}} \right] 
		\leq \frac{1}{R^{p}} \mathbb{E}\left[\sup\limits_{t\in[0,T]}\|X_t\|^{p} \right] 
		\leq \frac{C(p, T)}{R^{p}}. 
	\end{equation}  
	Similarly, in view of \eqref{bound Xk}, 
	\begin{equation}\label{stopping time 2}
		\mathbb{P}\left( \vartheta^{\tau}_R\leq T \right) \leq \frac{1}{R^{p}} \mathbb{E}\left[\sup\limits_{0\leq k \leq N}\|X^{\tau}_{k} \|^{p} \right] 
		\leq  \frac{C(p, T)}{R^{p}}. 
	\end{equation}
	In addition, it can be verified that the nonlinear term $F$ of SDE \eqref{compact SDE} satisfies 
	\begin{align}
		\|F(Y)\| &\leq C (1+\|Y\|^{3}), \quad \forall \,  Y \in \mathbb{R}^2,   \label{polynomial growth} \\ 
		\|F(Y)-F(\bar{Y})\| &\leq C(R) \|Y-\bar{Y}\|, \quad \forall \,  Y, \bar{Y} \in \mathbb{R}^2 \ \ \text{with}\ \ \|Y\| \vee \|\bar{Y}\| \leq R.  \label{local Lipschitz}
	\end{align} 
	
	For $s\in(t_{k},t_{k+1})$, by Taylor's theorem with integral form of the remainder, we obtain 
	\begin{equation*}
		F(X_{s}) = F(X_{t_{k+1}}) + \int_{0}^{1} \nabla F\left(\varsigma X_{t_{k+1}}+(1-\varsigma)X_{s}\right) (X_{s}-X_{t_{k+1}}) \rmd\varsigma 
		=: F(X_{t_{k+1}}) + \mathcal{E}(X_{s}, X_{t_{k+1}}), 
	\end{equation*}
	which means that 
	\begin{equation*}
		X_{t_{k+1}} 
		= X_{t_{0}} + \tau\sum_{i=1}^{k+1} F(X_{t_{i}}) + \sum_{i=0}^{k}\int_{t_{i}}^{t_{i+1}} \mathcal{E}(X_{s}, X_{t_{i+1}}) \rmd s + \sigma W_{t_{k+1}}. 
	\end{equation*}
	For $R>0$, introduce a sequence of events  
	\begin{equation*}
		\varrho_{k,R}=\left\{ \omega:  \sup\limits_{0\leq i \leq k} \|X_{t_{i}}\| + \sup\limits_{0\leq i \leq k} \|X^{\tau}_{i}\|\leq 2R \right\}. 
	\end{equation*}
	Obviously $\varrho_{k,R} \subset \varrho_{j,R}$ for $k\geq j$. 
	Since 
	\begin{equation*}
		X^{\tau}_{k+1} = X^{\tau}_{0} + \tau \sum_{i=1}^{k+1}F(X^{\tau}_{i}) + \sigma W_{t_{k+1}}, 
	\end{equation*}
	by \eqref{local Lipschitz} we have 
	\begin{align*}
		&\chi_{\varrho_{k,R}}\|X_{t_{k}} - X^{\tau}_{k}\| 
		\leq C(R)\tau \sum_{i=1}^{k} \chi_{\varrho_{i,R}}\|X_{t_{i}}-X^{\tau}_{i}\| + \sum_{i=0}^{k-1}\int_{t_{i}}^{t_{i+1}} \|\mathcal{E}(X_{s}, X_{t_{i+1}})\| \rmd s \\ 
		&\leq C(R)\tau\left(\|X_{t_{k}}\|+\|X^{\tau}_{k}\|\right) + C(R)\tau \sum_{i=1}^{k-1} \chi_{\varrho_{i,R}}\|X_{t_{i}}-X^{\tau}_{i}\| + \sum_{i=0}^{k-1}\int_{t_{i}}^{t_{i+1}} \|\mathcal{E}(X_{s}, X_{t_{i+1}})\| \rmd s. 
	\end{align*}
	By using \eqref{polynomial growth} and \eqref{bound Xt}, we deduce that 
	\begin{align*}
		&\mathbb{E}[\|\mathcal{E}(X_{s}, X_{t_{i+1}})\|^{p}] 
		\leq \mathbb{E}\left[ \int_{0}^{1} \left\| \nabla F\left(\varsigma X_{t_{i+1}}+(1-\varsigma)X_{s}\right) (X_{s}-X_{t_{i+1}}) \right\|^{p} \rmd\varsigma \right] \\
		&\leq C \left(\mathbb{E}\left[\sup\limits_{s\in[0,T]}\|X_{s}\|^{4p}\right]\right)^{\frac{1}{2}}  \left(\mathbb{E}\left[\|X_{s}-X_{t_{i+1}}\|^{2p}\right]\right)^{\frac{1}{2}}
		\leq C(p,T) \left(\mathbb{E}\left[\|X_{s}-X_{t_{i+1}}\|^{2p}\right]\right)^{\frac{1}{2}}, 
	\end{align*}
	where 
	\begin{align*}
		\mathbb{E}\left[\|X_{s}-X_{t_{i+1}}\|^{2p}\right] 
		&\leq (2\tau)^{2p-1} \int_{s}^{t_{i+1}} \mathbb{E}[\|F(X_{r})\|^{2p}] \rmd r + 2^{2p-1} \mathbb{E}\left[\sigma^{2p}\|W_{t_{i+1}}-W_{s}\|^{2p}\right]  \\
		&\leq C(p,T)\tau^{p}. 
	\end{align*}
	Consequently, 
	\begin{align*}
		&\mathbb{E}\left[\chi_{\varrho_{k,R}}\|X_{t_{k}} - X^{\tau}_{k}\|^{p} \right] \\
		\leq \ &C(R,p) \tau^{p} \mathbb{E}\left[\|X_{t_{k}}\|^{p}+\|X^{\tau}_{k}\|^{p}\right] + C(R,p,T) \tau \sum_{i=1}^{k-1} \mathbb{E}\left[\chi_{\varrho_{i,R}}\|X_{t_{i}}-X^{\tau}_{i}\|^{p}\right] + C(p,T) \tau^{\frac{p}{2}} \\
		\leq \ &C(R,p,T) \tau \sum_{i=1}^{k-1} \mathbb{E}\left[\chi_{\varrho_{i,R}}\|X_{t_{i}}-X^{\tau}_{i}\|^{p}\right] + C(R,p,T) \tau^{\frac{p}{2}}. 
	\end{align*}
	It follows from Gronwall's inequality that  
	\begin{equation}\label{convergence X tauR}
		\sup\limits_{0\leq k \leq N}\mathbb{E}\left[\chi_{\varrho_{k,R}}\|X_{t_{k}} - X^{\tau}_{k}\|^{p} \right]
		\leq  C(R,p,T) \tau^{\frac{p}{2}}. 
	\end{equation}
	Furthermore, by the definition of $C_{t}$ and $C^{\tau}_{k}$, we know that 
	\begin{equation}\label{bound C}
		\mathbb{E}\left[\sup\limits_{t\in[0,T]}\|C_t\|^{p} + \sup\limits_{0\leq k \leq N}\|C^{\tau}_{k}\|^{p}\right] = 2 < \infty. 
	\end{equation}
	By an analogous analysis, we can derive that 
	\begin{equation}\label{convergence C tauR}
		\sup\limits_{0\leq k \leq N}\mathbb{E}\left[\chi_{\varrho_{k,R}}\|C_{t_{k}} - C^{\tau}_{k}\|^{p} \right]
		\leq  C(R,p,T) \tau^{\frac{p}{2}}. 
	\end{equation}
	
	Let $e_{k} = X_{t_{k}}-X^{\tau}_{k} + C_{t_{k}}-C^{\tau}_{k}$. 
	By the definition of $\vartheta_R$, $\vartheta^{\tau}_R$ and $\varrho_{k,R}$, it can be inferred that 
	\begin{equation*}
		\left\{ \omega: \vartheta_R>T, \vartheta^{\tau}_R>T \right\} \subset \varrho_{N,R} \subset \varrho_{k,R}, \quad \forall \,  0\leq k \leq N. 
	\end{equation*}
	Applying the inequality $xy\leq\frac{\delta}{2} x^2 + \frac{1}{2\delta}y^2$, $\forall \,  x, y, \delta>0$, 
	we have 
	\begin{align*}
		\sup\limits_{0\leq k \leq N}\mathbb{E}\left[\|e_{k}\|^{p} \right] 
		&= \sup\limits_{0\leq k \leq N}\mathbb{E}\left[\|e_{k}\|^{p} \chi_{\{\vartheta_R>T, \vartheta^{\tau}_R>T\}}  \right] + \sup\limits_{0\leq k \leq N}\mathbb{E}\left[\|e_{k}\|^{p} \chi_{\{\vartheta_R\leq T \text{ or } \vartheta^{\tau}_R\leq T\}}  \right] \\ 
		&\leq  
		\sup\limits_{0\leq k \leq N}\mathbb{E}\left[\|e_{k}\|^{p} \chi_{\varrho_{k,R}}  \right] + \frac{\delta}{2}  \sup\limits_{0\leq k \leq N} \mathbb{E}[\|e_{k}\|^{2p}]
		+ \frac{1}{2\delta} \mathbb{P}\left( \vartheta_R\leq T \text{ or } \vartheta^{\tau}_R\leq T \right). 
	\end{align*}
	Given any $\epsilon>0$, by \eqref{bound Xt}, \eqref{bound Xk}, and \eqref{bound C},  we can choose $\delta$ so that  
	\begin{align*}
		\frac{\delta}{2} \sup\limits_{0\leq k \leq N} \mathbb{E}[\|e_{k}\|^{2p}]
		\leq \frac{4^{p-1}\delta}{2} \sup\limits_{0\leq k \leq N} \mathbb{E}\left[ \|X_{t_{k}}\|^{2p} + \|X^{\tau}_{k}\|^{2p} + \|C_{t_{k}}\|^{2p} + \|C^{\tau}_{k}\|^{2p}\right] < \frac{\epsilon}{3}. 
	\end{align*}
	Then for such $\delta>0$, by \eqref{stopping time 1} and \eqref{stopping time 2}, we can choose $R$ so that 
	\begin{equation*}
		\frac{1}{2\delta} \mathbb{P}\left( \vartheta_R\leq T \text{ or } \vartheta^{\tau}_R\leq T \right) \leq \frac{C(p, T)}{\delta R^{p}} < \frac{\epsilon}{3}. 
	\end{equation*}
	In addition, by \eqref{convergence X tauR} and \eqref{convergence C tauR} we know that 
	\begin{equation*}
		\sup\limits_{0\leq k \leq N}\mathbb{E}\left[\|e_{k}\|^{p} \chi_{\varrho_{k,R}}  \right]
		\leq 2^{p-1}
		\sup\limits_{0\leq k \leq N}\mathbb{E}\left[\chi_{\varrho_{k,R}}\|X_{t_{k}} - X^{\tau}_{k}\|^{p} + \chi_{\varrho_{k,R}}\|C_{t_{k}} - C^{\tau}_{k}\|^{p} \right]
		\leq  C(R,p,T) \tau^{\frac{p}{2}}. 
	\end{equation*} 
	Finally, choose $\tau$ sufficiently small such that 
	\begin{equation*}
		\sup\limits_{0\leq k \leq N}\mathbb{E}\left[\|e_{k}\|^{p} \chi_{\varrho_{k,R}}  \right] < \frac{\epsilon}{3}, 
	\end{equation*}  
	which completes the proof. 
\end{proof}

\section{Numerical experiments}\label{Sec:numerical example}
In this section we present several numerical experiments to validate the theoretical results. The simulations are based on the backward Euler method \eqref{BEM}. For the computation of numerical Lyapunov exponents, we use \eqref{numerical variational SDE} and \eqref{numerical lyp}. 

We begin by illustrating the SDE \eqref{SDE} under various parameter settings, offering an immediate graphical insights. 
For the deterministic case ($\sigma=0$), the equation \eqref{SDE} serves as a supercritical Hopf bifurcation (see \cite{Doan2018}): when $\alpha\leq0$ the system has a globally attracting equilibrium at $(x,y)^{\top}=(0,0)^{\top}$ which is exponentially stable until $\alpha=0$; when $\alpha>0$, the system has a limit cycle at $\left\{ (x,y)\in\mathbb{R}^2: x^2+y^2=\frac{\alpha}{a} \right\}$ which is globally attracting on $\mathbb{R}^2/\{0\}$ (see Fig. \ref{Fig:Hopf} (A)--(B) for the limit cycle with $\alpha=1$). 

In the presence of noise ($\sigma>0$), two distinct dynamical behaviors emerge: either the evolution of all initial points converges to a fixed random point (see Fig. \ref{Fig:Hopf} (C)--(D)), or it converges to a rather complicated object (see Fig. \ref{Fig:Hopf} (E)--(F)). The former is indicative of the phenomenon of synchronisation (see e.g. \cite{Flandoli2017}), namely, convergence of all trajectories to a single random equilibrium point, while the latter suggests the presence of a random attractor, which supports the SRB measure \cite{Ledrappier1988}. 

The differences between two types of dynamical phases, i.e., synchronisation and chaotic phases, can also be inferred from the Lyapunov exponents. In general, random attractors with negative Lyapunov exponents are associated with synchronisation, where the attractors are singletons; 
conversely, random attractors with a positive Lyapunov exponent impedes synchronisation and exhibits complex dynamic behaviors. 
As demonstrated in Theorem \ref{Thm:SRB}, the large shear strength $b$ implies the positivity of numerical Lyapunov exponent of the discrete RDS. 
Accordingly, we observe the single random equilibrium point in Fig. \ref{Fig:Hopf} (C)--(D) for $b=2$ and observe the random strange attractor in Fig. \ref{Fig:Hopf} (E)--(F) for $b=10$. 

Simulations of numerical Lyapunov exponents are plotted in Fig. \ref{Fig:lyp} for various parameters $\alpha$ and $b$, revealing a positive correlation between the shear strength $b$ and the numerical Lyapunov exponent $\lambda^{\tau}$ for a fixed $\alpha$. 
Specifically, a positive Lyapunov exponent is evident for large shear strength $b$, as indicated by the right-hand side of the red border line in Fig. \ref{Fig:lyp}.  
To investigate further, we plot numerical Lyapunov exponents for different $b$ with fixed $\alpha=1$ in Fig. \ref{Fig:plotb}. 
It appears that a bifurcation may occur among $b\in(5,6)$. Namely, the top numerical Lyapunov exponent changes sign from negative to positive within the interval $b\in(5,6)$. 
To elucidate the relationship between the sign of $\lambda^{\tau}$ and the nature of the random attractor, we conduct tests shown in Fig. \ref{Fig:plotb}, observing singletons when $b=4$, $5$ (see snapshot (A)), and random strange attractors when $b=6$, $7$ (see snapshots (B)--(C)). 

From Theorem \ref{Thm:SRB} and Figs. \ref{Fig:lyp}, \ref{Fig:plotb}, it is evident that a large shear strength $b$ induces a positive numerical Lyapunov exponent $\lambda^{\tau}$, thereby giving rise to the random attractor and associated SRB measure. 
Consequently, we take $b=15$ to examine the behavior of the random attractor $A^{\tau}(\omega)$ and the SRB measure $\mu^{\tau}_{\omega}$. 
We initiate our simulations with one million initial points from a two-dimensional normal distribution. After a brief period of evolution, these points still appear to follow a normal distribution (Fig. \ref{Fig:SRB} (A)). 
While after a long period of evolution, the evolution of all initial points converges to a strange attractor, characterized by its ``chaotic'' nature, Cantor-set-type structure, no fuzziness and very intense filamentation; see Fig. \ref{Fig:SRB} (B) for a particular realization of the SRB measure $\mu^{\tau}_{\theta_T\omega}$ at $T=500$. 
Given the randomness of the random attractor and the SRB measure, they continuously evolve and move in the phase space of the system, making them distinct at each moment in time. 
To capture this dynamical behavior, we have provided successive snapshots of sample measures $\mu^{\tau}_{\theta_t\omega}$ at times $t=t_0+0.1k$ for $t_0=500$ and $k=1, 2, 3, 4$, as shown in Fig. \ref{Fig:SRB} (C)--(F).

\begin{figure}
	\centering
	\subfloat[$b=10$, $\sigma=0$, $T=4$]{
		\begin{minipage}[t]{0.45\linewidth}
			\centering
			\includegraphics[height=5cm,width=5cm]{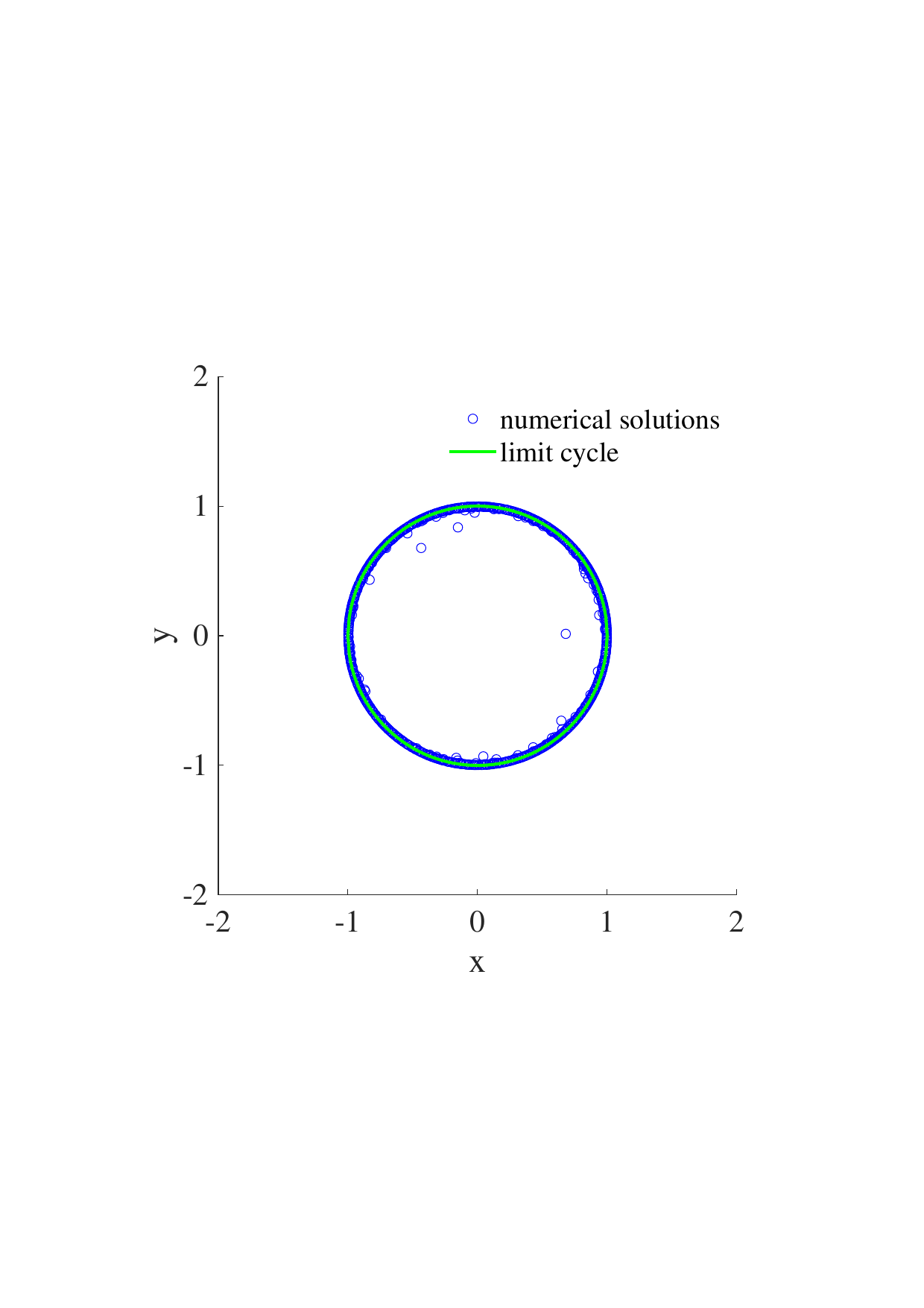} 
		\end{minipage}
	}
	\subfloat[$b=10$, $\sigma=0$, $T=40$]{
		\begin{minipage}[t]{0.45\linewidth}
			\centering
			\includegraphics[height=5cm,width=5cm]{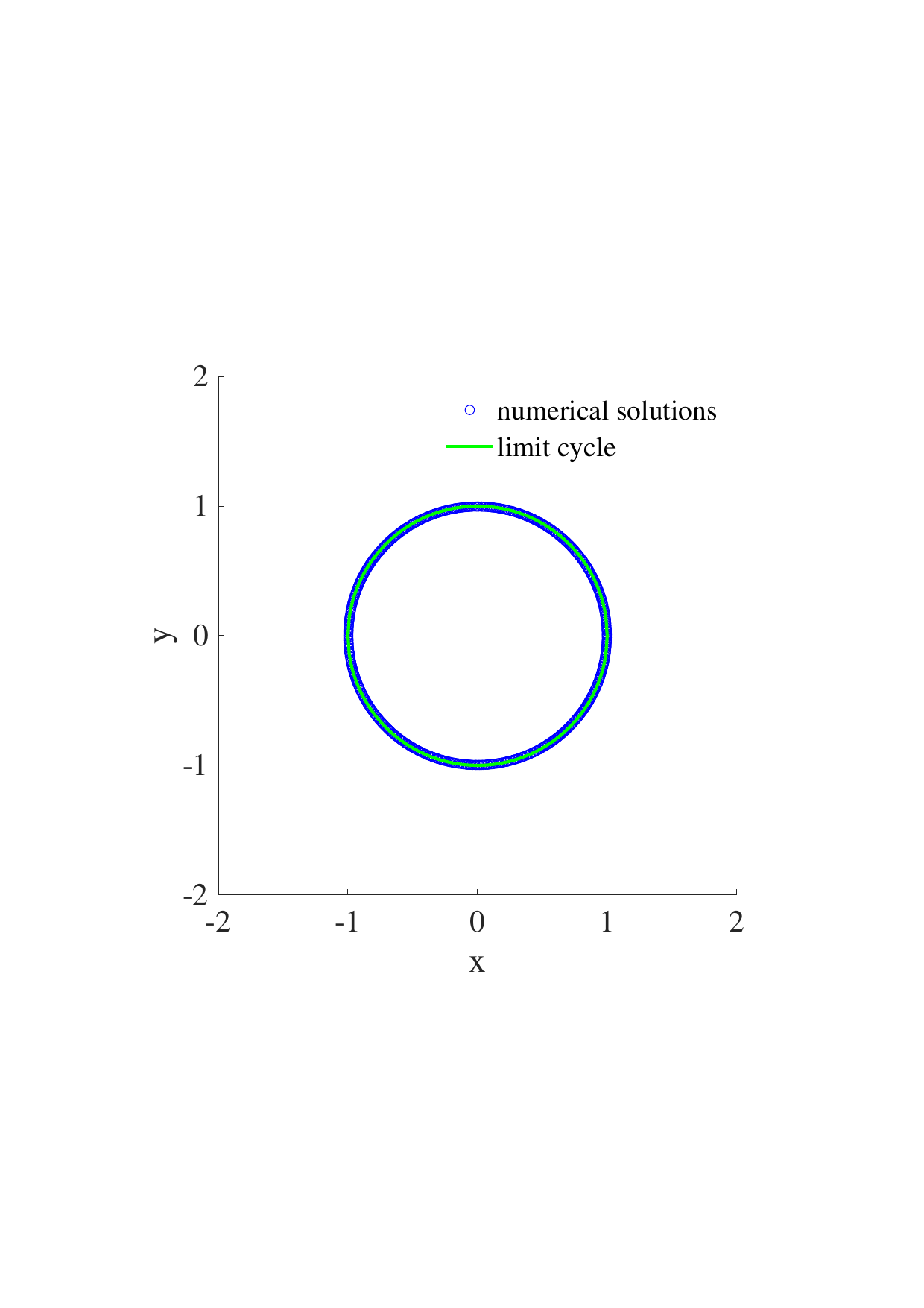} 
		\end{minipage}
	}
	\\	
	\subfloat[$b=2$, $\sigma=1$, $T=4$]{
		\begin{minipage}[t]{0.45\linewidth}
			\centering
			\includegraphics[height=5cm,width=5cm]{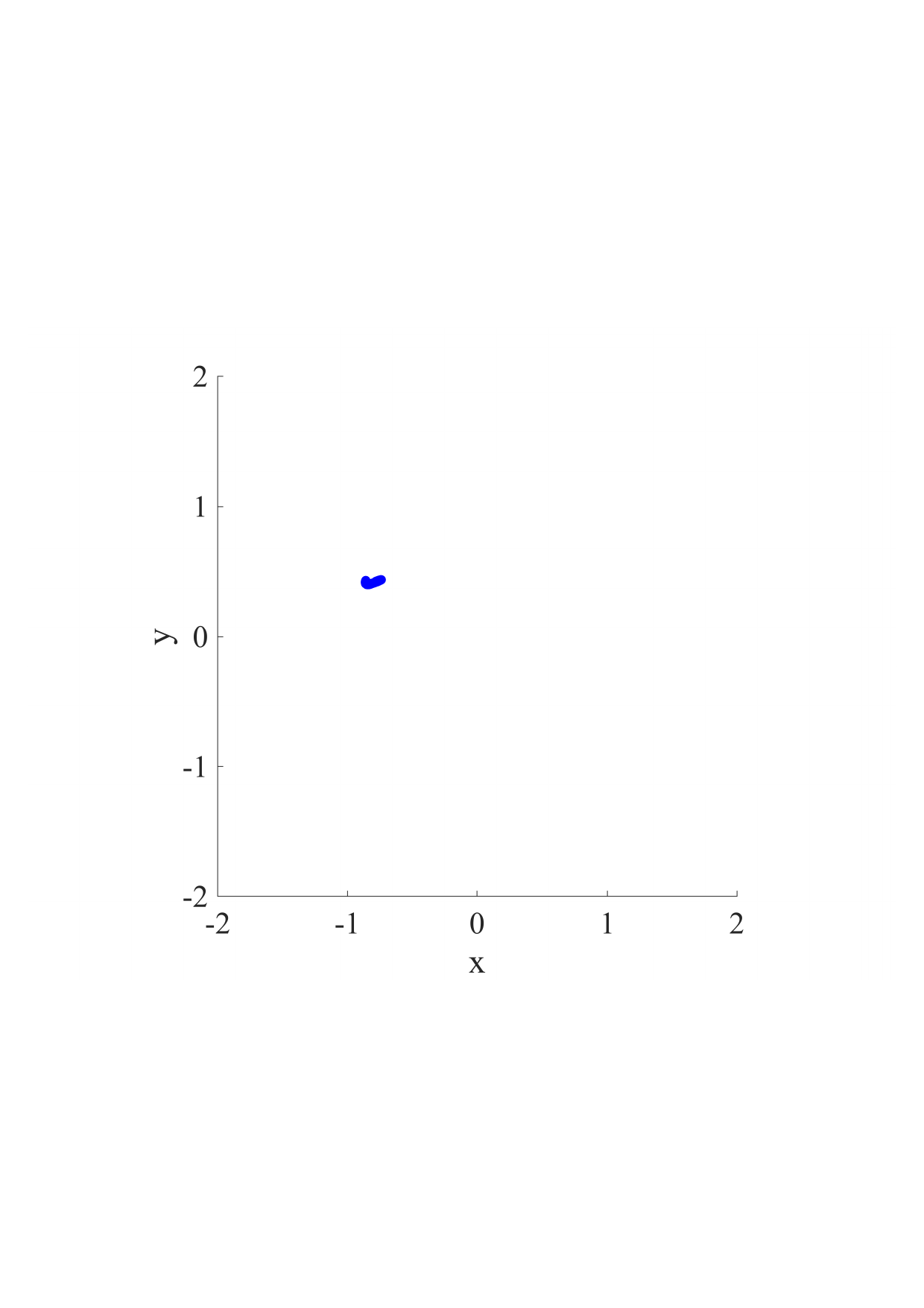} 
		\end{minipage}
	}
	\subfloat[$b=2$, $\sigma=1$, $T=30, 40, 50$]{
		\begin{minipage}[t]{0.45\linewidth}
			\centering
			\includegraphics[height=5cm,width=5cm]{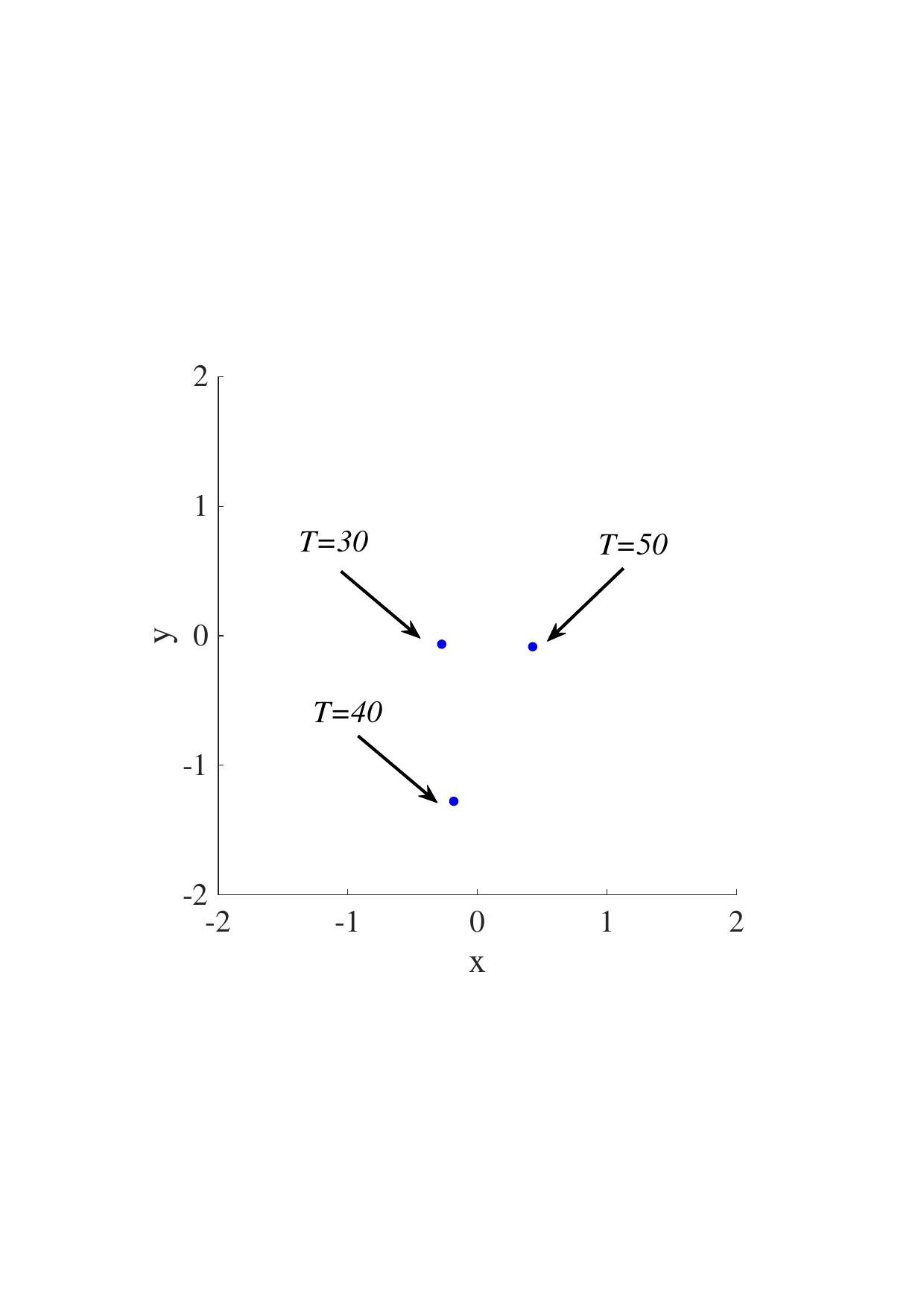} 
	\end{minipage}
	}
	\\
	\subfloat[$b=10$, $\sigma=1$, $T=4$]{
		\begin{minipage}[t]{0.45\linewidth}
			\centering
			\includegraphics[height=5cm,width=5cm]{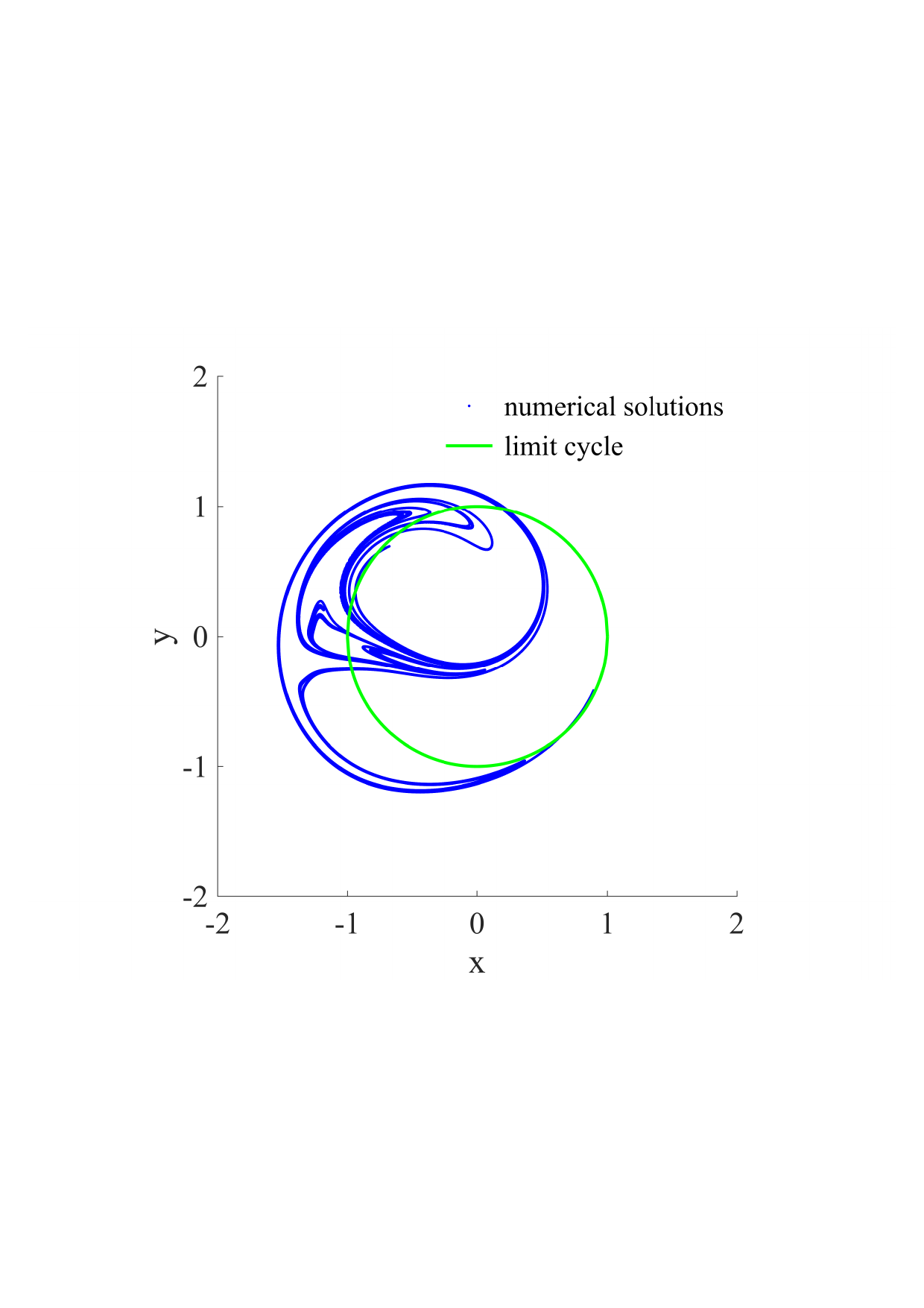} 
	\end{minipage}
}
\subfloat[$b=10$, $\sigma=1$, $T=40$]{
	\begin{minipage}[t]{0.45\linewidth}
		\centering
		\includegraphics[height=5cm,width=5cm]{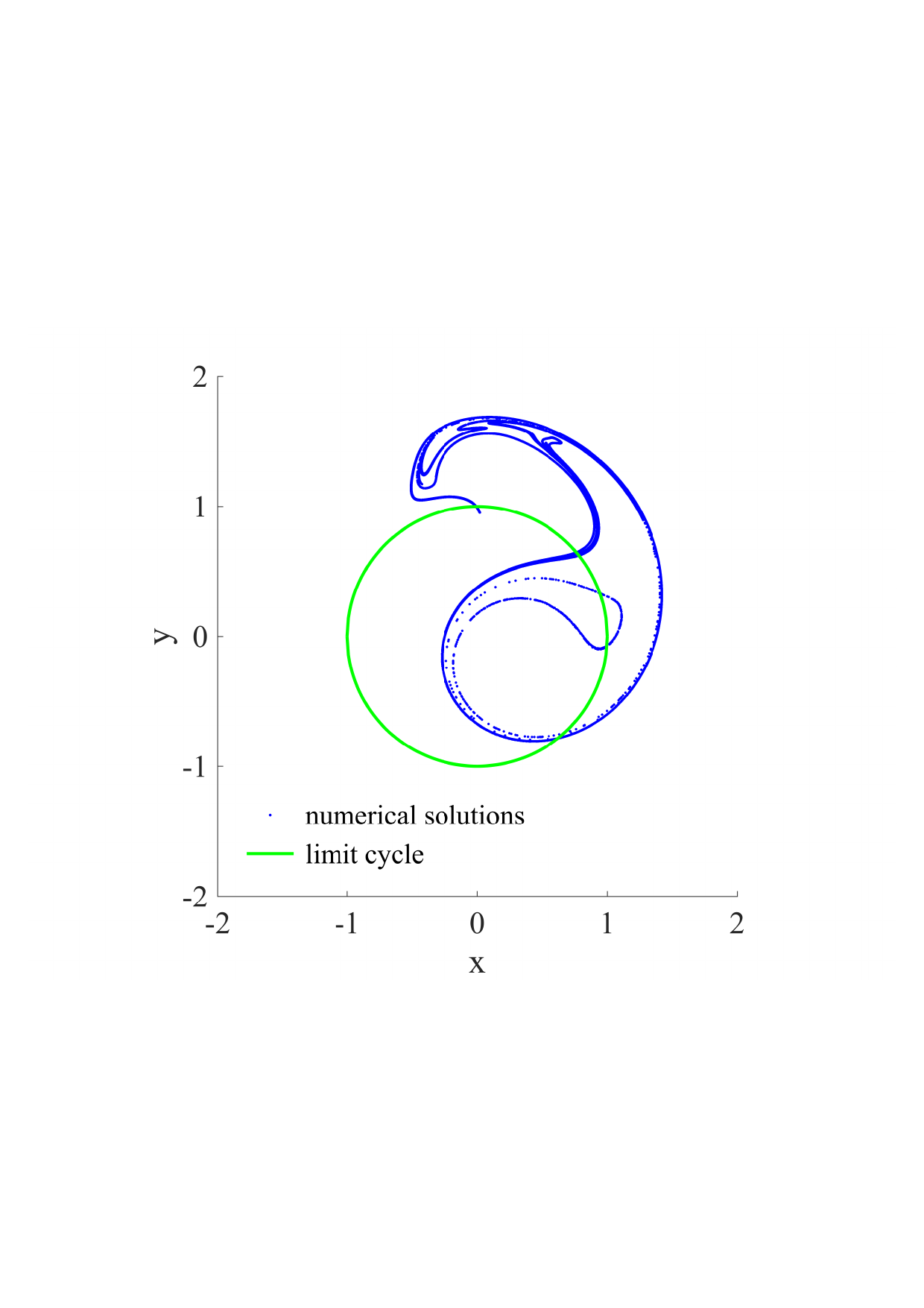}
\end{minipage}
}

\centering
\caption{Phase portraits of numerical solutions for a fixed noise realization $\omega$. One million initial points in phase space are chosen, and are computed to $T=4$ and $40$. The parameters $\alpha=\beta=a=1$ are used. In (A)--(B), in absence of noise, the evolution seems to converge to either a fixed point or a limit cycle, depending on the choice of $\alpha$. In (C)--(D) in the presence of noise and $b=2$ we observe synchronisation, i.e. convergence of all trajectories to a single point. In (E)--(F), for $b=10$ there is no synchronisation but convergence to a more complicated random attractor. }
\label{Fig:Hopf}
\end{figure}

\begin{figure}[tbhp]
	\centering  
	\includegraphics[scale = 0.5]{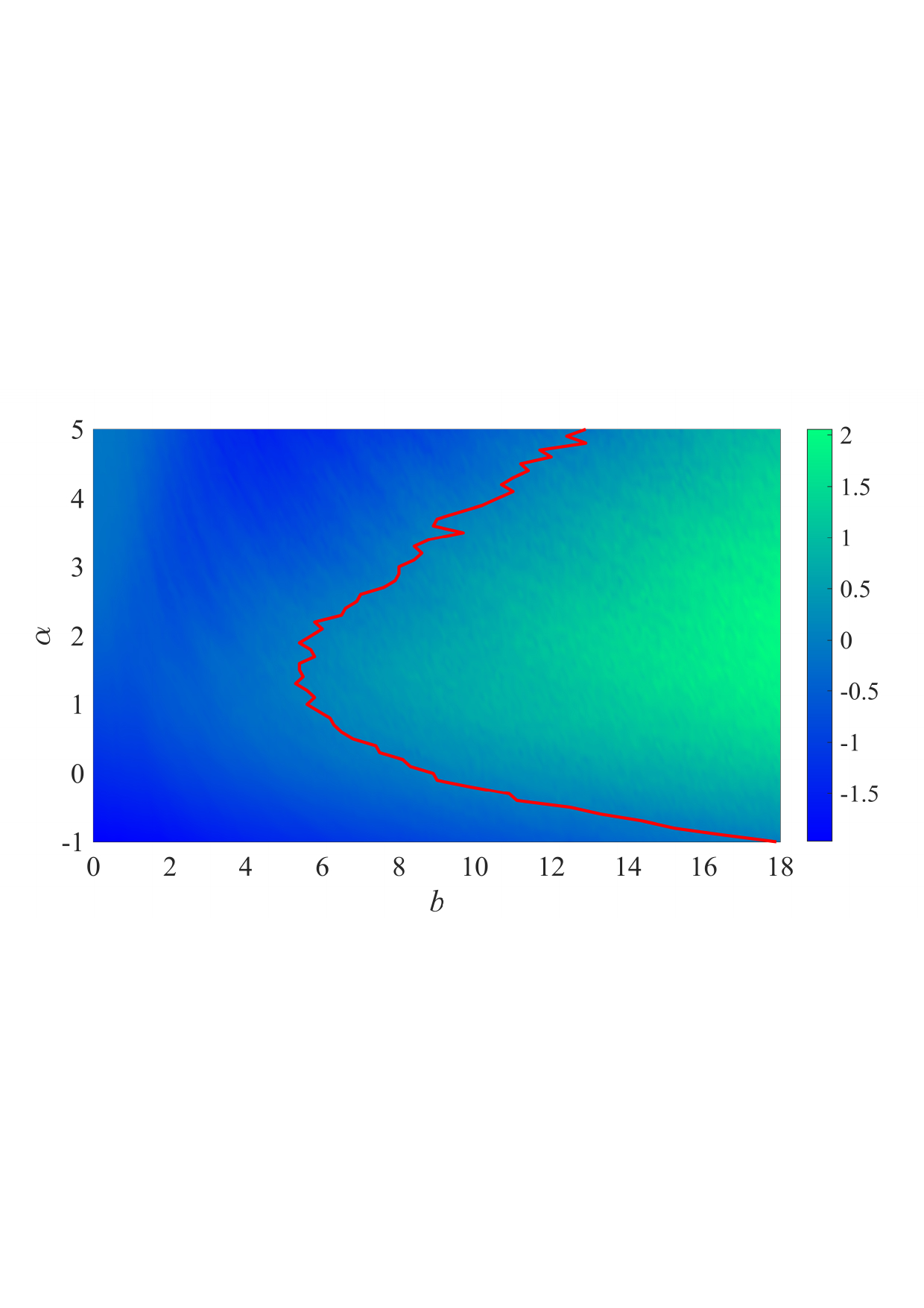} 
	\caption{Numerical Lyapunov exponents $\lambda^{\tau}$ for different parameters $\alpha$ and $b$ of SDE \eqref{SDE} computed to $T=1000$ with $\beta=a=\sigma=1$. 
	The red line corresponds to the border $\lambda^{\tau}=0$.}
	\label{Fig:lyp}
\end{figure}

\begin{figure}[tbhp]
	\centering
	\subfloat[\centering Numerical Lyapunov exponents for different $b$]{
		\begin{minipage}[t]{0.7\linewidth}
			\centering 
			\includegraphics[scale = 0.45]{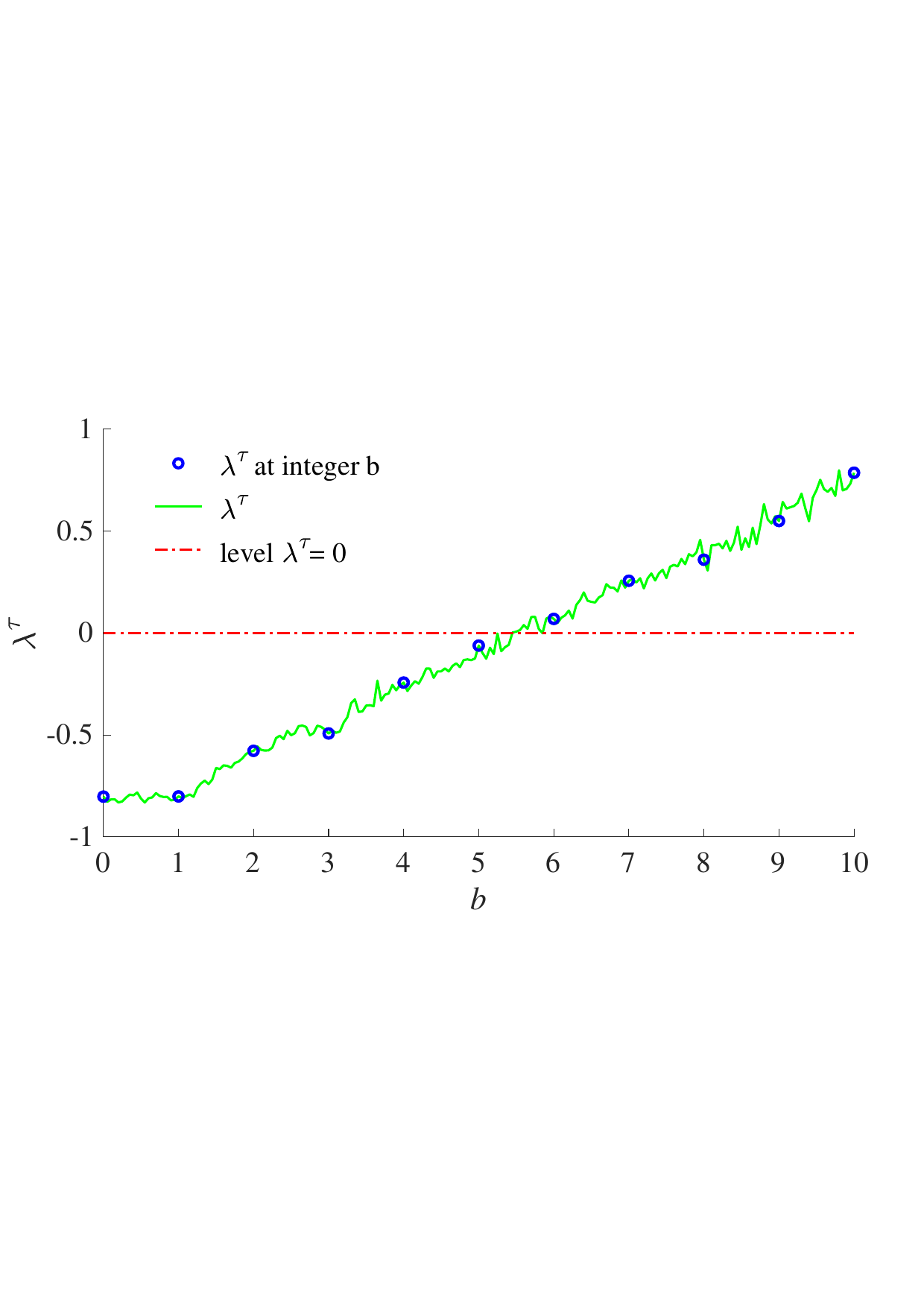}
		\end{minipage}
	}
	\\
	\subfloat[\centering $b=4$ and $5$]{
		\begin{minipage}[t]{0.3\linewidth}
			\centering
			\includegraphics[scale = 0.32]{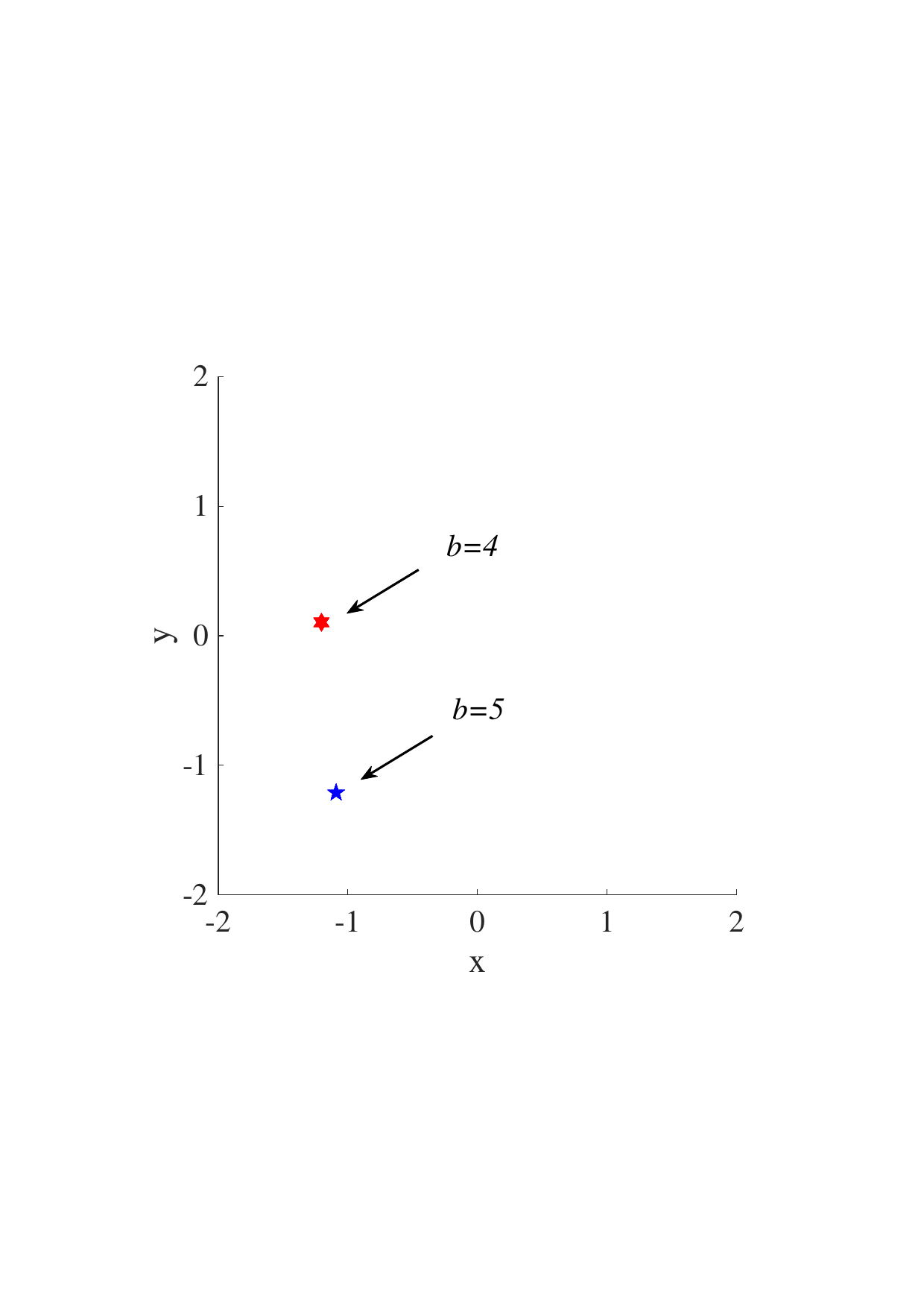} 
		\end{minipage}	} 
	\subfloat[\centering $b=6$]{
		\begin{minipage}[t]{0.3\linewidth}
			\centering
			\includegraphics[scale = 0.32]{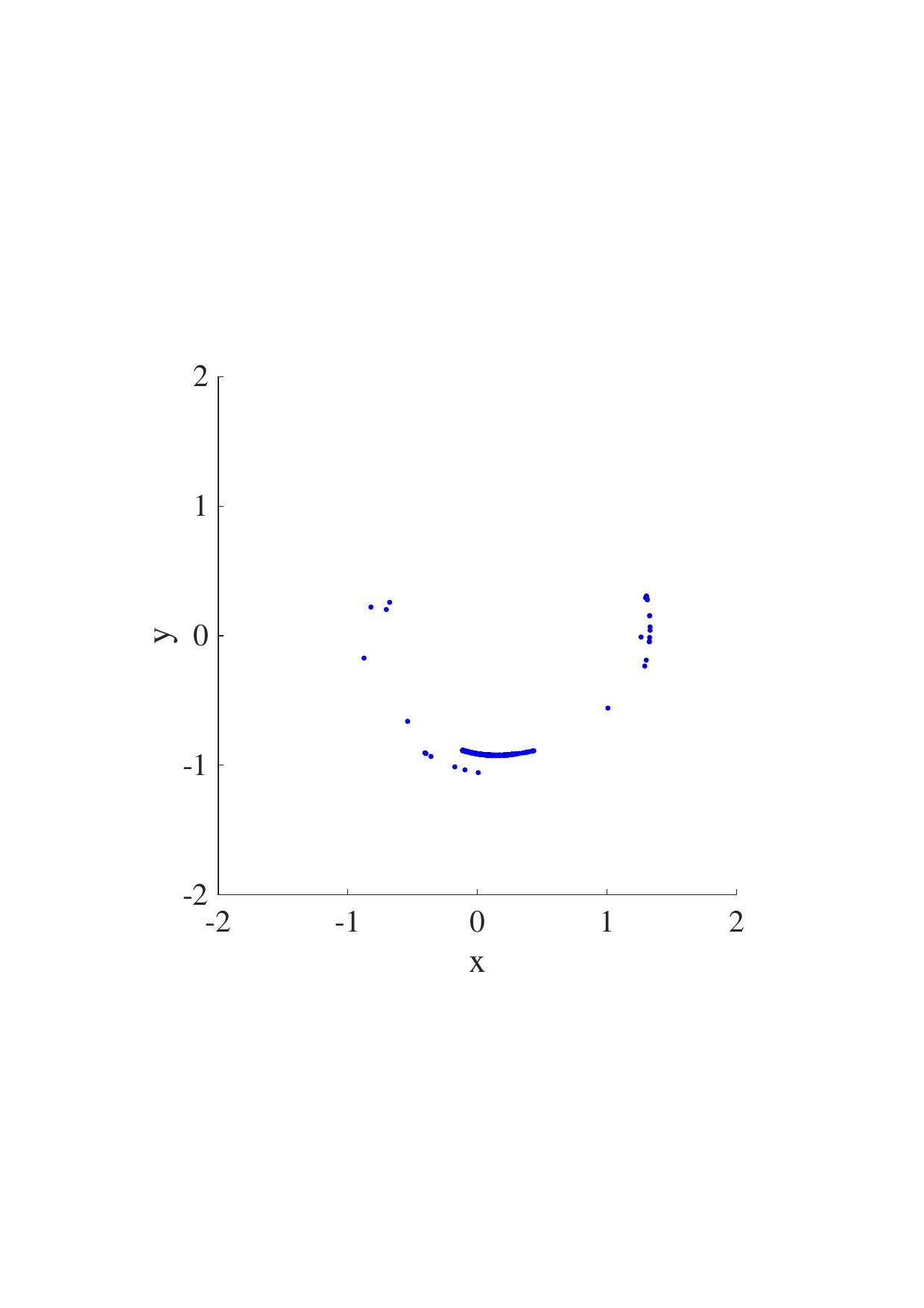} 
	\end{minipage}	} 
	\subfloat[\centering $b=7$]{
		\begin{minipage}[t]{0.3\linewidth}
			\centering
			\includegraphics[scale = 0.32]{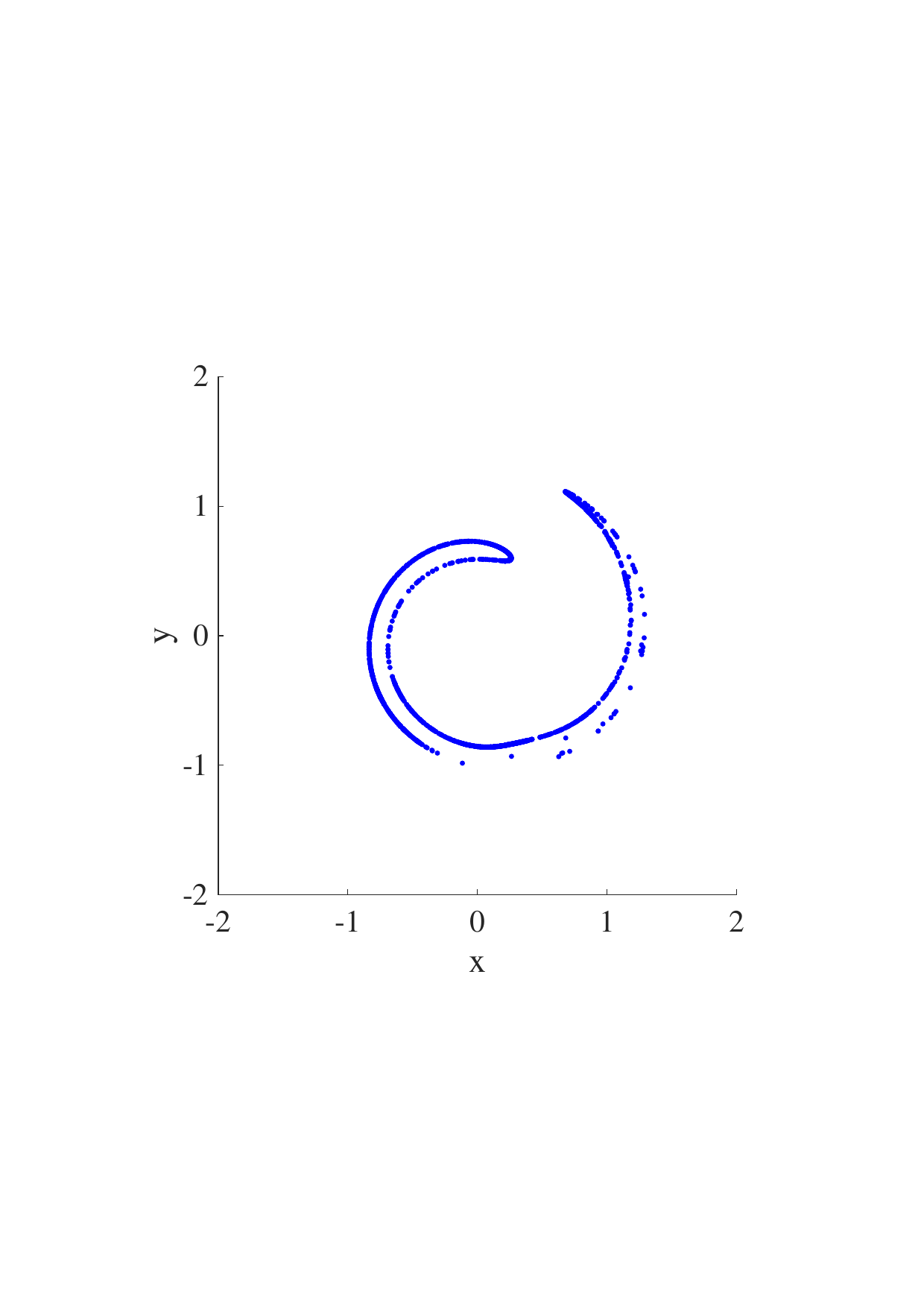} 
	\end{minipage}	}  
	\caption{In (A) we depict $\lambda^{\tau}$ as a function of $b$ for fixed $\alpha=\beta=a=\sigma=1$ computed to $T=1000$ with step size $\tau=10^{-5}$. The red dashed line demarcates the level $\lambda^{\tau}=0$, approximately situated between $b=5$ and $b=6$. We also present the phase portraits for different parameters $b$ in (B)--(D). It seems that the bifurcation may occur within the interval $b\in(5,6)$, which supports the observation in (A).}
	\label{Fig:plotb}
\end{figure}

\begin{figure}[tbhp]
	\centering
	\subfloat[\centering $T=0.1$]{
		\begin{minipage}[t]{0.3\linewidth}
			\centering
			\includegraphics[scale=0.3]{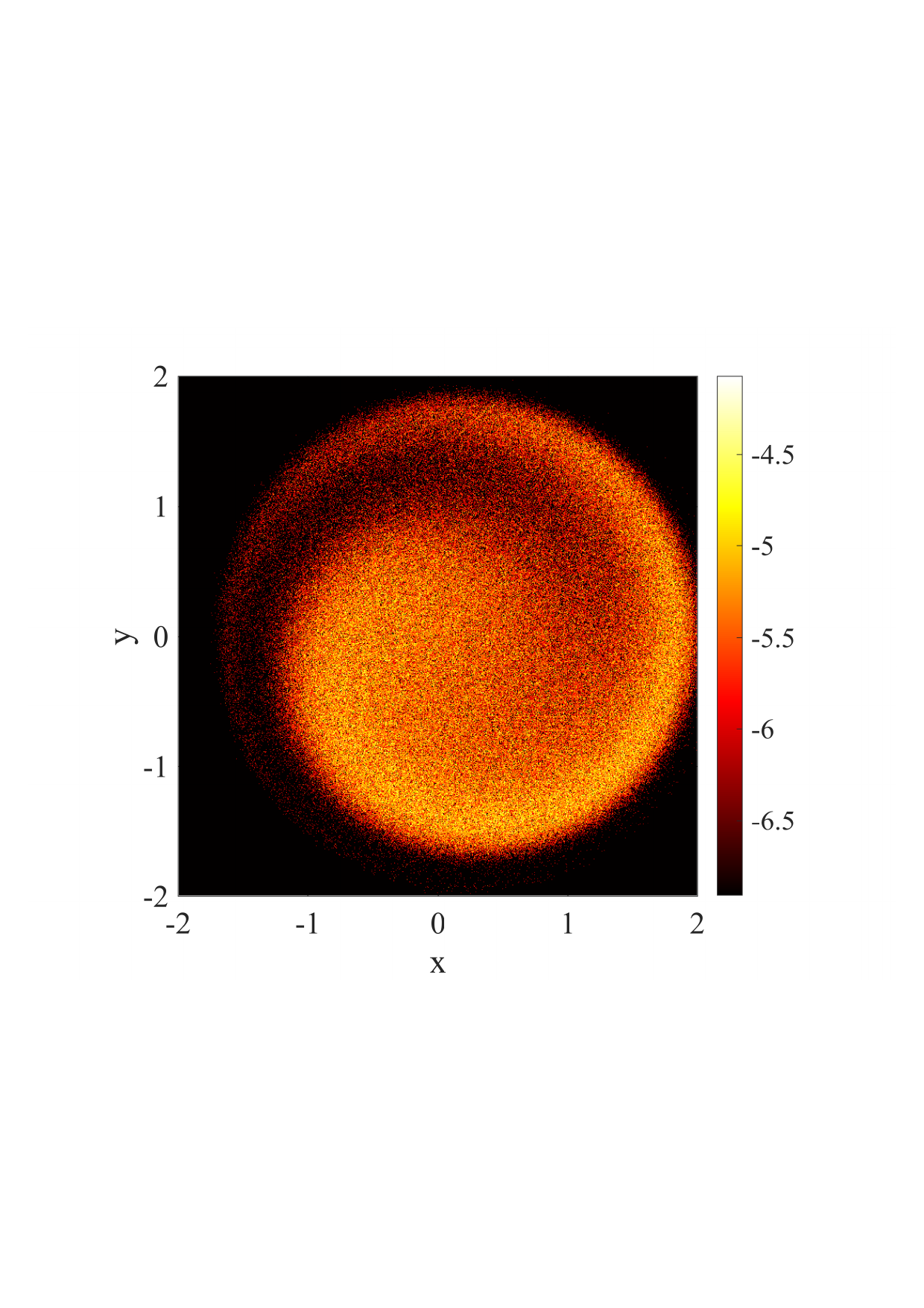} 
		\end{minipage}	} 
	\subfloat[\centering $T=500$]{
		\begin{minipage}[t]{0.3\linewidth}
			\centering
			\includegraphics[scale=0.3]{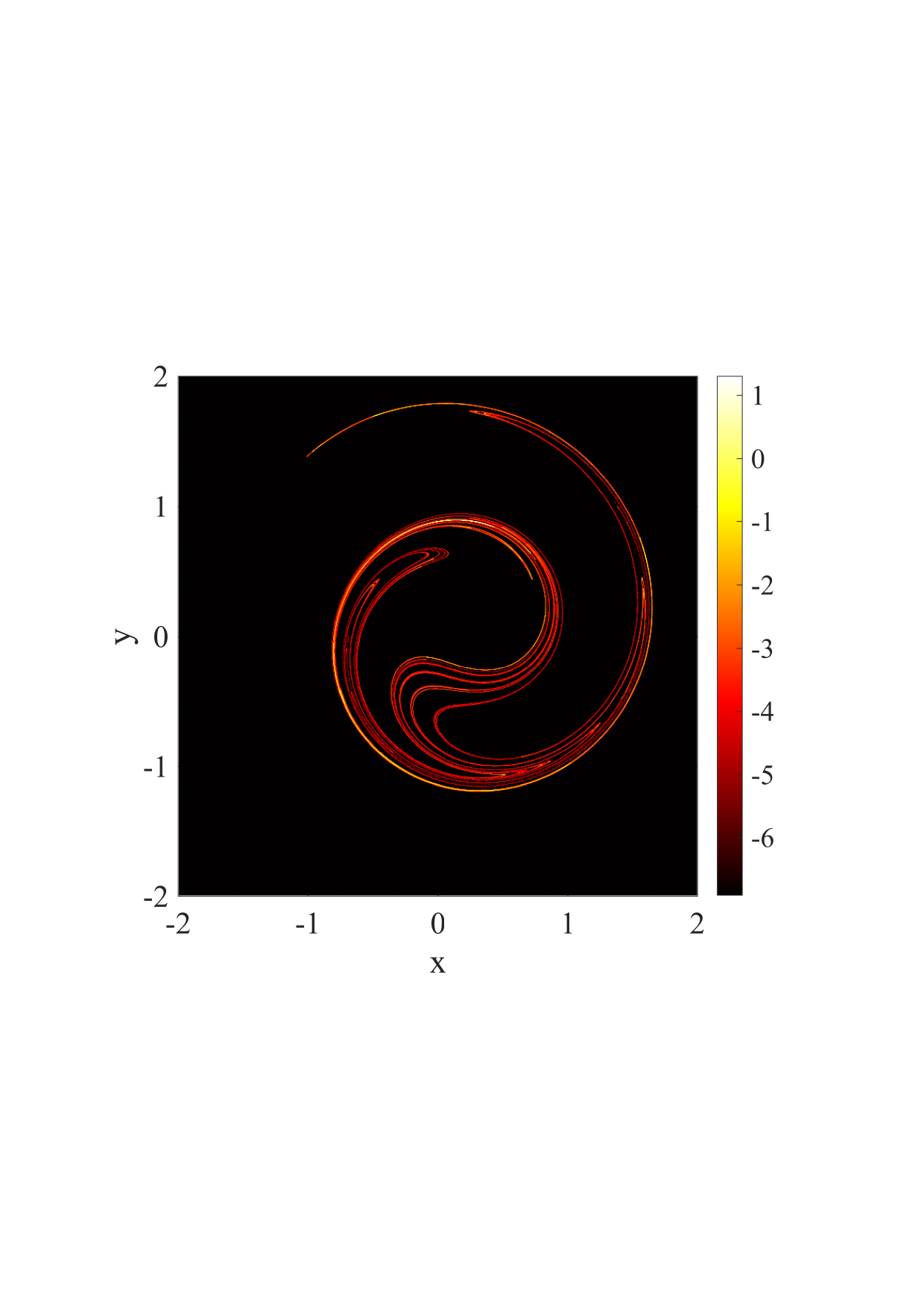} 
		\end{minipage}	} 
	\subfloat[\centering $T=500.1$]{
		\begin{minipage}[t]{0.3\linewidth}
			\centering
			\includegraphics[scale=0.3]{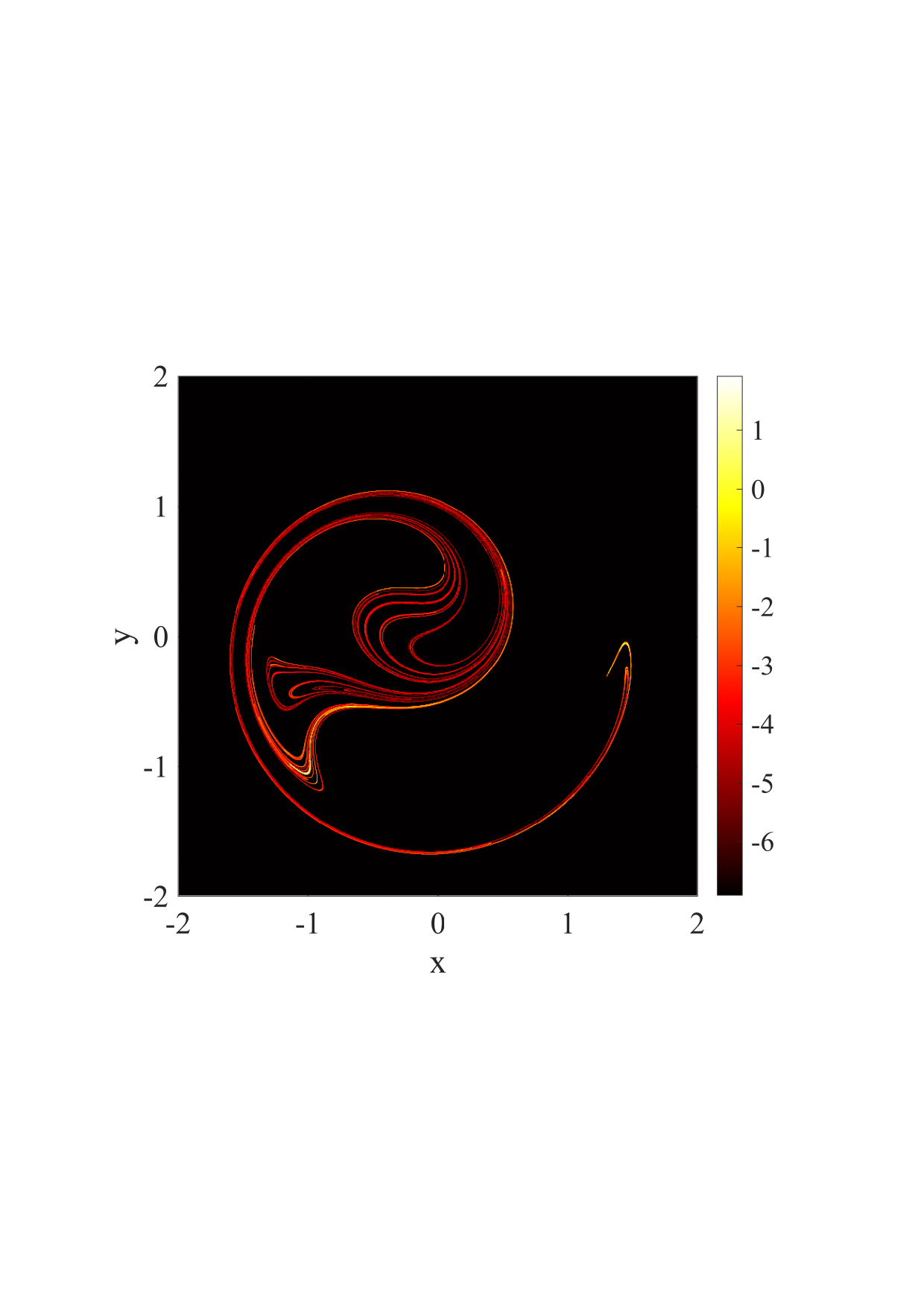} 
	\end{minipage}	} 
\\
	\subfloat[\centering $T=500.2$]{
		\begin{minipage}[t]{0.3\linewidth}
			\centering
			\includegraphics[scale=0.3]{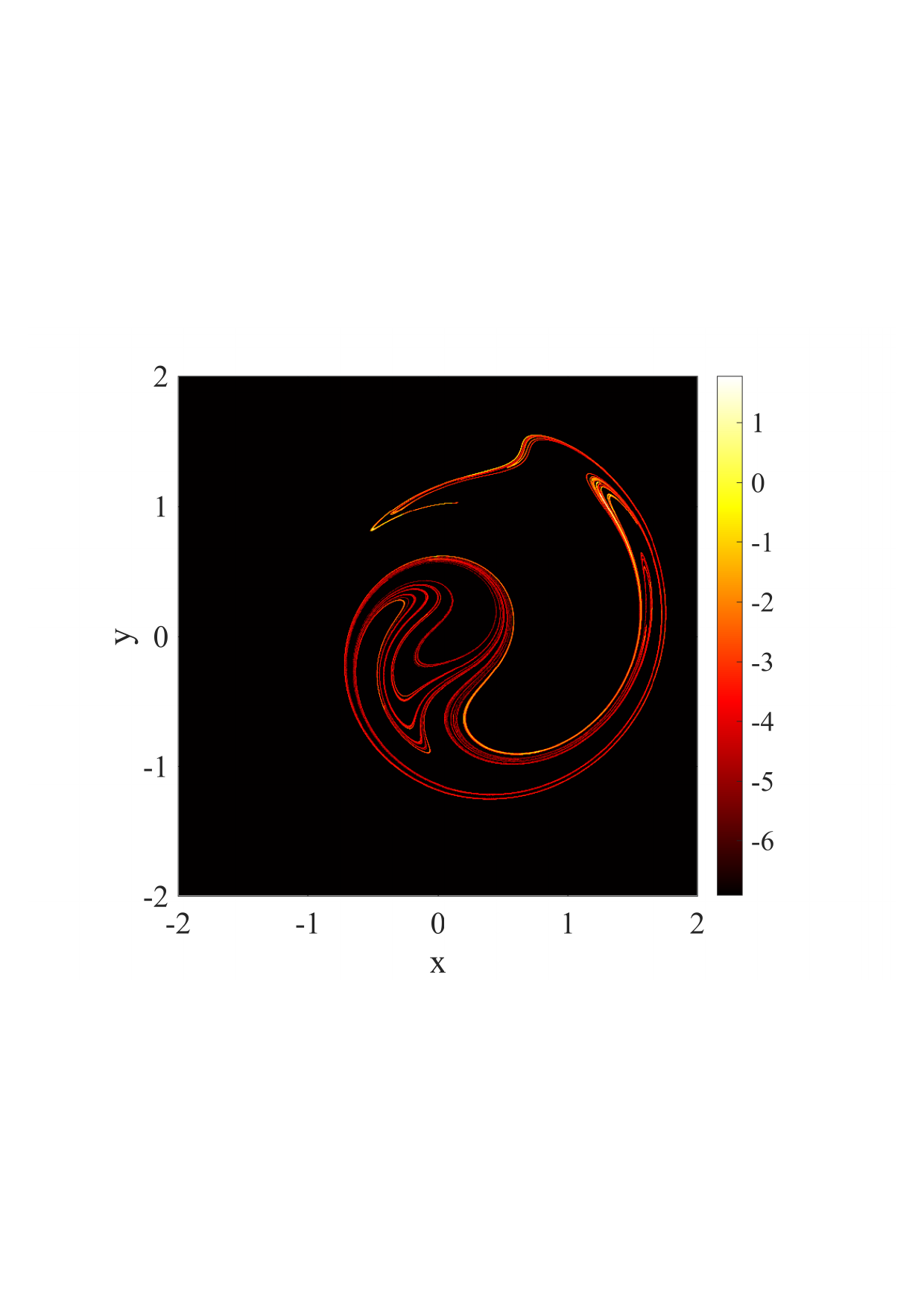} 
	\end{minipage}	} 
\subfloat[\centering $T=500.3$]{
	\begin{minipage}[t]{0.3\linewidth}
		\centering
		\includegraphics[scale=0.3]{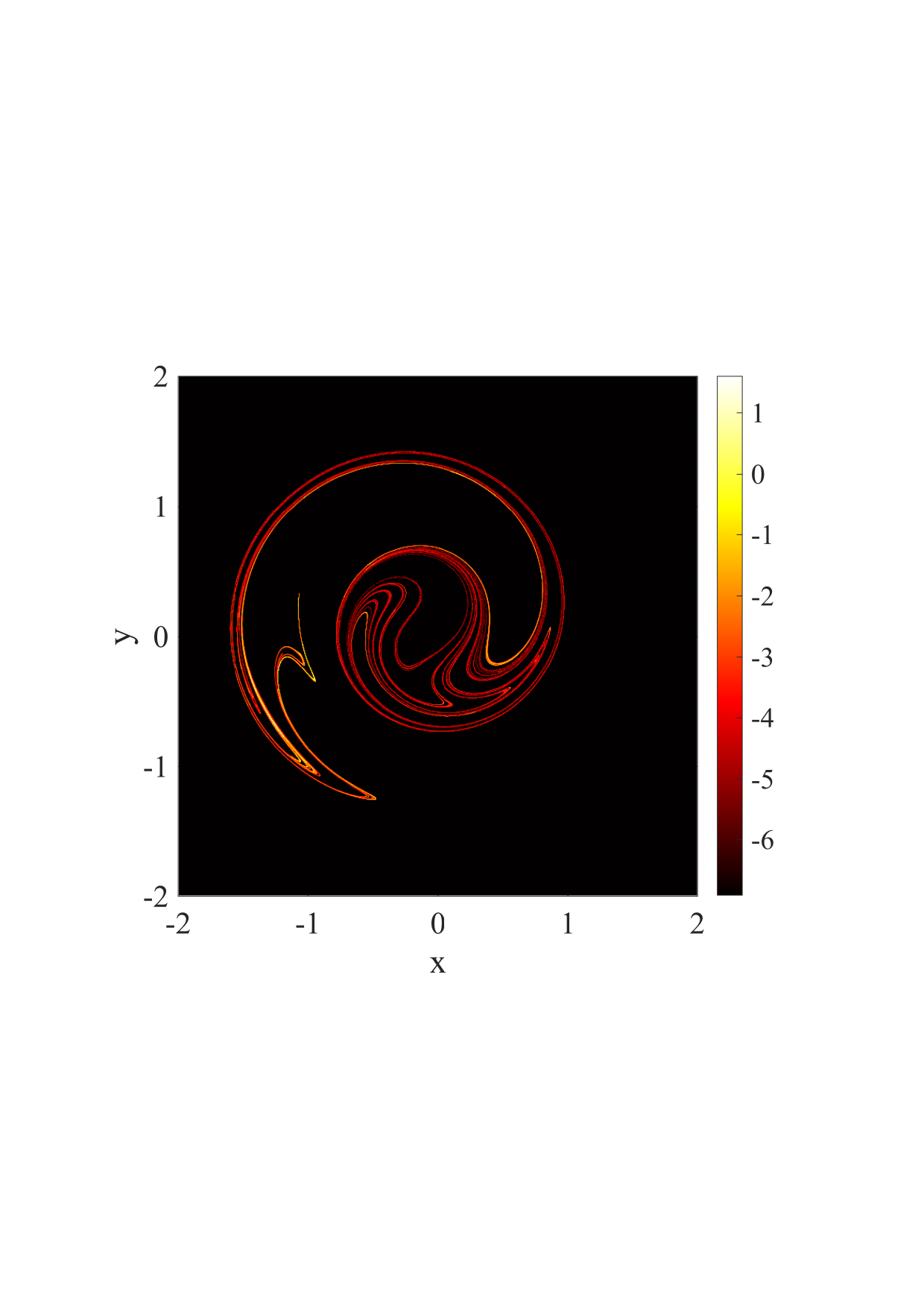} 
\end{minipage}	} 
\subfloat[\centering $T=500.4$]{
	\begin{minipage}[t]{0.3\linewidth}
		\centering
		\includegraphics[scale=0.3]{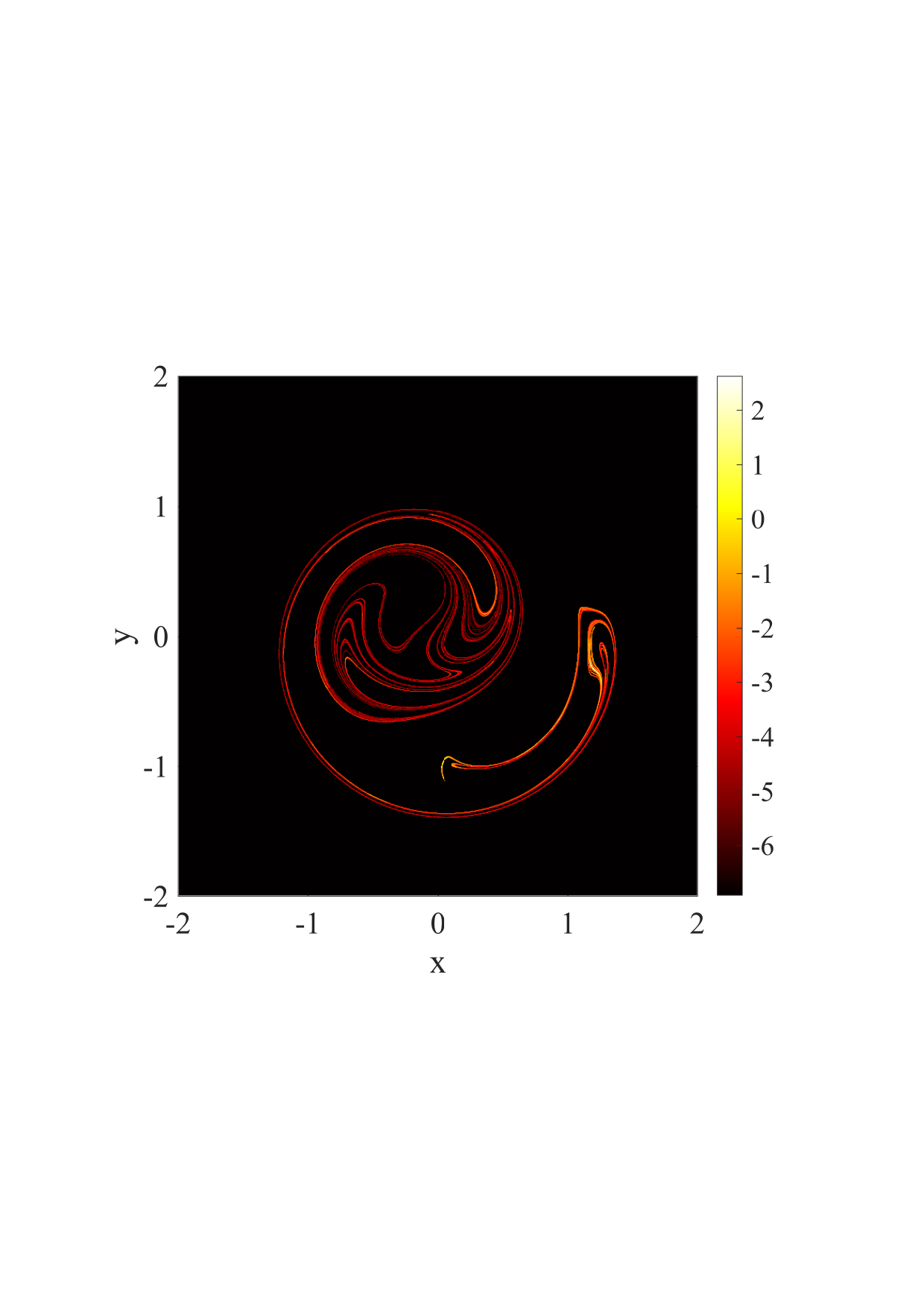} 
\end{minipage}	} 
	\caption{Random attractors $A^{\tau}(\theta_{T}\omega)$ and the associated sample measures $\mu^{\tau}_{\theta_{T}\omega}$ for a fixed realization $\omega$. One million initial points chosen from a two-dimensional normal distribution are used  and the dynamics is computed for different $T$. The parameters are chosen as $b=15$ and $\alpha=\beta=a=\sigma=1$, and the step size $\tau=10^{-4}$. The colorbar to the right is on a log-scale and quantifies the probability to end up in a particular region of phase space. }
	\label{Fig:SRB}
\end{figure}

\appendix
\section{Solvability of the backward Euler method}\label{Appendix:solvable of IES}
We first present the solvablility of the implicit equation \eqref{BEM}. 
Let 
\begin{align*}
	\begin{cases}
		F_1(\bar{x},\bar{y},w_1,w_2,x,y) = x-\bar{x}-\tau\left( \alpha x - \beta y - (ax+by)(x^2+y^2) \right) - \sigma w_1, \\
		F_2(\bar{x},\bar{y},w_1,w_2,x,y) = y-\bar{y}-\tau\left( \beta x + \alpha y + (bx-ay)(x^2+y^2) \right) - \sigma w_2. 
	\end{cases}
\end{align*}
The backward Euler method \eqref{BEM} can be written as 
\begin{equation}\label{solvable 1}
	\begin{cases}
		F_{1}(x^{\tau}_{k}, y^{\tau}_{k}, \Delta W_{k+1}^1, \Delta W_{k+1}^2, x^{\tau}_{k+1}, y^{\tau}_{k+1}) = 0, \\[0.2em]
		F_{2}(x^{\tau}_{k}, y^{\tau}_{k}, \Delta W_{k+1}^1, \Delta W_{k+1}^2, x^{\tau}_{k+1}, y^{\tau}_{k+1}) = 0, 
	\end{cases}
\end{equation}
where $(\Delta W_{k+1}^1 , \Delta W_{k+1}^2)^{\top}=\Delta W_{k+1}$. 
It can be verified that when $\tau<\frac{1}{1+|\alpha|}$, 
\begin{align*}
	\begin{bmatrix}
		x \\[.2em] y 
	\end{bmatrix} \cdot 
	\begin{bmatrix}
		F_1(\bar{x},\bar{y},w_1,w_2,x,y) + (\bar{x}+\sigma w_1) \\[.2em]
		F_2(\bar{x},\bar{y},w_1,w_2,x,y) + (\bar{y}+\sigma w_2)
	\end{bmatrix}
	= (1-\alpha\tau)(x^2+y^2) + a\tau(x^2+y^2)^2 > 0. 
\end{align*}
Thus \eqref{solvable 1} has a solution for all $k\geq0$ whenever $\tau<\frac{1}{1+|\alpha|}$ (see, e.g., \cite[Theorem 6.3.4]{Ortega2000}). 

The Jacobi matrix of $F_i(\bar{x},\bar{y},w_1,w_2,x,y)$, $i=1, 2$ reads
\begin{equation*}
	\frac{\partial (F_1, F_2)}{\partial (x, y)} = \begin{bmatrix}
		1-\alpha\tau  + \tau (3ax^2+ay^2+2bxy) & \beta\tau  + \tau (bx^2+3by^2+2axy) \\[0.4em]
		-\beta\tau  - \tau (3bx^2+by^2-2axy) & 1-\alpha\tau  + \tau (ax^2+3ay^2-2bxy)
	\end{bmatrix}. 
\end{equation*}
The determinant of $\partial (F_1, F_2) / \partial (x, y)$ is 
\begin{align*}
	\det \left(\frac{\partial (F_1, F_2)}{\partial (x, y)}\right) 
	&= 
	1-2\alpha\tau + (\beta^2+\alpha^2)\tau^2 + (4a\tau-4a\alpha\tau^2+4b\beta\tau^2)  (x^2+y^2) \\
	&\quad + \tau^2 \left( (3a^2+3b^2)(x^4+y^4) + (6a^2+6b^2)x^2y^2 \right). 
\end{align*}
When $\tau<\min\{\frac{1}{1+4|\alpha|},\frac{a}{1+|a\alpha|+|b\beta|}\}$, we can deduce that the determinant of $\partial (F_1, F_2) / \partial (x, y)$ is bounded from below by a strictly positive constant uniform with respect to $\tau$.  
Hence by the implicit function theorem, we conclude that $X^{\tau}_{k}$, $k\geq0$ are well-defined whenever $\tau<\min\{\frac{1}{1+4|\alpha|},\frac{a}{1+|a\alpha|+|b\beta|}\}$.

We next consider the solvability of $U^{\tau}_{k+1}$ determined by \eqref{numerical variational SDE}, which can be written as a system of linear equations, i.e., 
\begin{equation}\label{MU}
	M(x^{\tau}_{k+1},y^{\tau}_{k+1}) U^{\tau}_{k+1}
	= U^{\tau}_{k}, 
\end{equation}
where the coefficient matrix $M(x,y)$ is given by
\begin{equation}\label{matrix M}
	M(x,y) = \begin{bmatrix}
		1-\alpha\tau+\tau\left(3ax^2+2bxy+ay^2\right) & \beta\tau+\tau\left(bx^2+2axy+3by^2\right) \\[.6em]
		-\beta\tau-\tau\left(3bx^2-2axy+by^2\right) & 1-\alpha\tau+\tau\left(ax^2-2bxy+3ay^2\right)
	\end{bmatrix}. 
\end{equation}
Direct calculation gives the determinant of $M(x,y)$, 
\begin{align*}
	\det M(x,y) &= (1-\alpha\tau)^2 + (\tau-\alpha\tau^2)(4ax^2+4by^2) + \tau^2 \beta^2 \\
	&\quad + \tau^2 \left( \beta(4bx^2+4ay^2) +(3a^2+3b^2)(x^4+y^4) + (6a^2+6b^2)x^2y^2 \right). 
\end{align*} 
It can be seen that if $\tau<\frac{1}{1+|\alpha|}$, then $\det M(x^{\tau}_{k+1},y^{\tau}_{k+1})>0$ for all $x^{\tau}_{k+1}, y^{\tau}_{k+1}\in\mathbb{R}$. 
Thus $U^{\tau}_{k}$, $k\geq0$ are well-defined whenever $\tau<\frac{1}{1+|\alpha|}$.

\section{Calculation of $c^{\tau}_{k} $ and $s^{\tau}_{k}$}\label{Appendix:ck and sk}
Denote $\begin{bmatrix}
	u^{\tau}_k \\[.2em] v^{\tau}_k
\end{bmatrix} = U^{\tau}_{k} = r_{k} \begin{bmatrix}
c^{\tau}_{k} \\ s^{\tau}_{k}
\end{bmatrix}$ with $r_{k}=\|U^{\tau}_{k}\|$, $c^{\tau}_{k} = \cos\xi^{\tau}_{k}$, and $s^{\tau}_{k} = \sin\xi^{\tau}_{k}$. 
To facilitate subsequent analysis, we introduce $(A^{c}_{1},A^{s}_{1})^{\top} = r_{k+1}^{-1} (U^{\tau}_{k+1}-U^{\tau}_{k})$. It follows from  \eqref{numerical variational SDE} that 
\begin{equation}\label{A1}
	\begin{bmatrix}
		A^{c}_{1} \\ A^{s}_{1}
	\end{bmatrix} = \tau \begin{bmatrix}
	\alpha & -\beta \\ \beta & \alpha 
	\end{bmatrix} \begin{bmatrix}
	c^{\tau}_{k+1} \\ s^{\tau}_{k+1}
	\end{bmatrix} 
	+ \|X^{\tau}_{k+1}\|^2 \begin{bmatrix}
	-a & -b \\ b & -a 
	\end{bmatrix} \begin{bmatrix}
	c^{\tau}_{k+1} \\ s^{\tau}_{k+1}
	\end{bmatrix} + 2 X^{\tau}_{k+1} \cdot \begin{bmatrix}
	c^{\tau}_{k+1} \\ s^{\tau}_{k+1}
	\end{bmatrix}  \begin{bmatrix}
	-a & -b \\ b & -a 
	\end{bmatrix} X^{\tau}_{k+1}. 
\end{equation}
By direct calculation, we find that 
\begin{equation}\label{ck1}
	\begin{aligned}
	c^{\tau}_{k+1}-c^{\tau}_{k} 
%	&= \frac{u^{\tau}_{k+1}}{r_{k+1}} - \frac{u^{\tau}_{k}}{r_{k}} = \frac{r_{k}u^{\tau}_{k+1}-r_{k+1}u^{\tau}_{k}}{r_{k}r_{k+1}} 
%	= \frac{r_{k}(u^{\tau}_{k+1}-u^{\tau}_{k})-(r_{k+1}-r_{k})u^{\tau}_{k}}{r_{k}r_{k+1}} \\
%	&= \frac{u^{\tau}_{k+1}-u^{\tau}_{k}}{r_{k+1}} -  \frac{\left((u^{\tau}_{k+1})^2+(v^{\tau}_{k+1})^2-(u^{\tau}_{k})^2-(v^{\tau}_{k})^2\right)u^{\tau}_{k}}{r_{k}r_{k+1}(r_{k+1}+r_{k})} \\
	&= \frac{u^{\tau}_{k+1}-u^{\tau}_{k}}{r_{k+1}} -  \frac{(u^{\tau}_{k+1}+u^{\tau}_{k})(u^{\tau}_{k+1}-u^{\tau}_{k})u^{\tau}_{k}}{r_{k}r_{k+1}(r_{k+1}+r_{k})} - \frac{(v^{\tau}_{k+1}+v^{\tau}_{k})(v^{\tau}_{k+1}-v^{\tau}_{k})u^{\tau}_{k}}{r_{k}r_{k+1}(r_{k+1}+r_{k})} \\
	&=: A^{c}_{1} - A^{c}_{2} - A^{c}_{3}, 
\end{aligned}
\end{equation}
where 
\begin{align*}
	A^{c}_{2}
%	&=  \frac{(u^{\tau}_{k+1}+u^{\tau}_{k})c^{\tau}_{k}}{r_{k+1}+r_{k}} A^{c}_{1}
%	= c^{\tau}_{k}c^{\tau}_{k+1}A^{c}_{1} - c^{\tau}_{k}A^{c}_{1} \left( c^{\tau}_{k+1} -  \frac{u^{\tau}_{k+1}+u^{\tau}_{k}}{r_{k+1}+r_{k}} \right) \\
%	&= c^{\tau}_{k}c^{\tau}_{k+1}A^{c}_{1} - c^{\tau}_{k}A^{c}_{1}   \frac{c^{\tau}_{k+1}r_{k}-u^{\tau}_{k}}{r_{k+1}+r_{k}} \\
	&= (c^{\tau}_{k+1})^2A^{c}_{1} + c^{\tau}_{k+1}(c^{\tau}_{k}-c^{\tau}_{k+1})A^{c}_{1} - c^{\tau}_{k}A^{c}_{1}   \frac{c^{\tau}_{k+1}r_{k}-u^{\tau}_{k}}{r_{k+1}+r_{k}}, \\
	A^{c}_{3}
%	&=  \frac{(v^{\tau}_{k+1}+v^{\tau}_{k})c^{\tau}_{k}}{r_{k+1}+r_{k}} \frac{v^{\tau}_{k+1}-v^{\tau}_{k}}{r_{k+1}}
%	= c^{\tau}_{k}s^{\tau}_{k+1}\frac{v^{\tau}_{k+1}-v^{\tau}_{k}}{r_{k+1}} - c^{\tau}_{k}\frac{v^{\tau}_{k+1}-v^{\tau}_{k}}{r_{k+1}} \left( s^{\tau}_{k+1} -  \frac{v^{\tau}_{k+1}+v^{\tau}_{k}}{r_{k+1}+r_{k}} \right) \\
%	&= c^{\tau}_{k}s^{\tau}_{k+1}\frac{v^{\tau}_{k+1}-v^{\tau}_{k}}{r_{k+1}} - c^{\tau}_{k}\frac{v^{\tau}_{k+1}-v^{\tau}_{k}}{r_{k+1}}  \frac{s^{\tau}_{k+1}r_{k}-v^{\tau}_{k}}{r_{k+1}+r_{k}} \\
	&= c^{\tau}_{k+1}s^{\tau}_{k+1}A^{s}_{1} + s^{\tau}_{k+1}(c^{\tau}_{k}-c^{\tau}_{k+1})A^{s}_{1} - c^{\tau}_{k}A^{s}_{1}   \frac{s^{\tau}_{k+1}r_{k}-v^{\tau}_{k}}{r_{k+1}+r_{k}}. 
\end{align*}
Substituting the expressions for $A^{c}_{2}$ and $A^{c}_{3}$ into \eqref{ck1}, we obtain
\begin{equation}\label{ck}
	\begin{aligned}
	c^{\tau}_{k+1}-c^{\tau}_{k}
%	&= (s^{\tau}_{k+1})^2A^{c}_{1} - c^{\tau}_{k+1}s^{\tau}_{k+1}\frac{v^{\tau}_{k+1}-v^{\tau}_{k}}{r_{k+1}} 
%	+ \Big( -c^{\tau}_{k+1}(c^{\tau}_{k}-c^{\tau}_{k+1})A^{c}_{1} + c^{\tau}_{k}A^{c}_{1}   \frac{c^{\tau}_{k+1}r_{k}-u^{\tau}_{k}}{r_{k+1}+r_{k}}  \\
%	&\qquad \qquad - s^{\tau}_{k+1}(c^{\tau}_{k}-c^{\tau}_{k+1})\frac{v^{\tau}_{k+1}-v^{\tau}_{k}}{r_{k+1}} 
%	+ c^{\tau}_{k}\frac{v^{\tau}_{k+1}-v^{\tau}_{k}}{r_{k+1}}  \frac{s^{\tau}_{k+1}r_{k}-v^{\tau}_{k}}{r_{k+1}+r_{k}}  \Big)  \\
	&= \tau G_{1}(x^{\tau}_{k+1},y^{\tau}_{k+1},c^{\tau}_{k+1},s^{\tau}_{k+1}) + \mathcal{R}_{k+1}^{c}, 
\end{aligned}
\end{equation}
where $G_{1}$ is given by \eqref{G} and 
\begin{equation}\label{Rkc}
	\mathcal{R}_{k+1}^{c} =  (c^{\tau}_{k+1}A^{c}_{1}+s^{\tau}_{k+1}A^{s}_{1})(c^{\tau}_{k+1}-c^{\tau}_{k}) + c^{\tau}_{k}A^{c}_{1}   \frac{c^{\tau}_{k+1}r_{k}-u^{\tau}_{k}}{r_{k+1}+r_{k}}  + c^{\tau}_{k}A^{s}_{1}   \frac{s^{\tau}_{k+1}r_{k}-v^{\tau}_{k}}{r_{k+1}+r_{k}}. 
\end{equation}
By an analogous derivation, we have 
\begin{equation}\label{sk}
	s^{\tau}_{k+1}-s^{\tau}_{k} = \tau G_{2}(x^{\tau}_{k+1},y^{\tau}_{k+1},c^{\tau}_{k+1},s^{\tau}_{k+1}) + \mathcal{R}_{k+1}^{s}, 
\end{equation}
where $G_{2}$ is given by \eqref{G} and 
\begin{equation}\label{Rks}
		\mathcal{R}_{k+1}^{s} =  (s^{\tau}_{k+1}A^{s}_{1}+c^{\tau}_{k+1}A^{c}_{1})(s^{\tau}_{k+1}-s^{\tau}_{k}) + s^{\tau}_{k}A^{s}_{1}   \frac{s^{\tau}_{k+1}r_{k}-v^{\tau}_{k}}{r_{k+1}+r_{k}}  + s^{\tau}_{k}A^{c}_{1}  \frac{c^{\tau}_{k+1}r_{k}-u^{\tau}_{k}}{r_{k+1}+r_{k}}. 
\end{equation}

Next we provide the estimates of $\mathcal{R}_{k+1}^{c}$ and $\mathcal{R}_{k+1}^{s}$. We first present a rough estimate. It follows from \eqref{A1} and $\max\{|c^{\tau}_{k}|,|s^{\tau}_{k}|\}\leq1$ for all $k\geq0$ that 
\begin{align*}
	|A^{c}_{1}| &\leq  \tau  \left(\alpha +\beta+ (4a+2b)|x^{\tau}_{k+1}|^2 + (2a+4b) |y^{\tau}_{k+1}|^2 \right), \\
	|A^{s}_{1}| &\leq  \tau  \left(\alpha +\beta+ (2a+4b)|x^{\tau}_{k+1}|^2 + (4a+2b) |y^{\tau}_{k+1}|^2 \right). 
\end{align*}
By noting that $r_{k}\geq0$, we have 
\begin{equation}\label{Rcs}
	\begin{aligned}
	|\mathcal{R}_{k+1}^{c}| &\leq  2|c^{\tau}_{k+1}-c^{\tau}_{k}| |A^{c}_{1}| + |c^{\tau}_{k+1}-c^{\tau}_{k}| |A^{s}_{1}| +  |s^{\tau}_{k+1}-s^{\tau}_{k}| |A^{s}_{1}|, \\
	|\mathcal{R}_{k+1}^{s}| &\leq  2|s^{\tau}_{k+1}-s^{\tau}_{k}| |A^{s}_{1}| + |c^{\tau}_{k+1}-c^{\tau}_{k}| |A^{c}_{1}| + |s^{\tau}_{k+1}-s^{\tau}_{k}| |A^{c}_{1}|. 
\end{aligned}
\end{equation}
Subsequently, we refine the estimate \eqref{Rcs}. 
By \eqref{ck} and \eqref{sk}, we deduce that  
\begin{equation}\label{ck+1-ck}
	\begin{aligned}
	|c^{\tau}_{k+1} - c^{\tau}_{k}| &\leq \tau \left( \beta+(4a+6b)\|X^{\tau}_{k+1}\|^2 \right) + |\mathcal{R}_{k+1}^{c}| \\
	&\leq \tau \left( \beta+(4a+6b)\|X^{\tau}_{k+1}\|^2 \right) + 2|A^{c}_{1}| + 2|A^{s}_{1}| \\
	&\leq \tau \left( 4\alpha+5\beta+(16a+18b)\|X^{\tau}_{k+1}\|^2 \right), 
\end{aligned}
\end{equation}
and similarly, 
\begin{equation}\label{sk+1-sk}
	|s^{\tau}_{k+1} - s^{\tau}_{k}| \leq \tau \left( 4\alpha+5\beta+(16a+18b)\|X^{\tau}_{k+1}\|^2 \right). 
\end{equation}
Substituting \eqref{ck+1-ck} and \eqref{sk+1-sk} into \eqref{Rcs}, we obtain the refined estimate of $\mathcal{R}_{k+1}^{c}$ and $\mathcal{R}_{k+1}^{s}$, i.e., 
\begin{align*}
	&|\mathcal{R}_{k+1}^{c}| + |\mathcal{R}_{k+1}^{s}| \leq  \left(3|c^{\tau}_{k+1}-c^{\tau}_{k}|+|s^{\tau}_{k+1}-s^{\tau}_{k}|\right) |A^{c}_{1}| + \left(3|s^{\tau}_{k+1}-s^{\tau}_{k}|+|c^{\tau}_{k+1}-c^{\tau}_{k}|\right) |A^{s}_{1}| \\
	&\leq 4\tau^2  \left( 4\alpha+5\beta+(16a+18b)\|X^{\tau}_{k+1}\|^2 \right) \left(2\alpha +2\beta+ 6(a+b) \|X^{\tau}_{k+1}\|^2 \right). 
\end{align*} 
In addition, by using $\max \{|c^{\tau}_{k}|,|s^{\tau}_{k}|\}\leq1$ for all $k\geq0$ and \eqref{Rcs},  we have 
\begin{equation}\label{estimate R}
	\begin{aligned}
		|s^{\tau}_{k+1}\mathcal{R}_{k+1}^{c}-c^{\tau}_{k+1}\mathcal{R}_{k+1}^{s}| &\leq |\mathcal{R}_{k+1}^{c}|+|\mathcal{R}_{k+1}^{s}| \leq 4|A^{c}_{1}| + 4|A^{s}_{1}| \\
		&\leq 4\tau  \left(2\alpha +2\beta+ 6(a+b) \|X^{\tau}_{k+1}\|^2 \right). 
	\end{aligned} 
\end{equation}

\section{Minorization condition for $\{(X^{\tau}_k,\xi^{\tau}_k):k\geq0\}$}\label{Appendix:minorization}
Denote $\rho_{k} = \tan\xi^{\tau}_{k} = v^{\tau}_{k} / u^{\tau}_{k}$, where $u^{\tau}_{k}$ and $v^{\tau}_{k}$ are two components of $U^{\tau}_{k}$ determined by \eqref{numerical variational SDE}. It follows from \eqref{MU} that 
\begin{equation}\label{rho}
	\rho_{k+1} = \frac{-M_{21}(x^{\tau}_{k+1},y^{\tau}_{k+1}) + M_{11}(x^{\tau}_{k+1},y^{\tau}_{k+1})\rho_{k}}{M_{22}(x^{\tau}_{k+1},y^{\tau}_{k+1}) - M_{12}(x^{\tau}_{k+1},y^{\tau}_{k+1})\rho_{k}}, 
\end{equation}
where the matrix $M(x,y)=[M_{ij}(x,y)]_{i, j=1, 2}$ is given by \eqref{matrix M}. 
The verification of the minorization condition for process $\{(X^{\tau}_k,\xi^{\tau}_k): k\geq0\}$ is divided into three cases. 

\textit{Case 1. }
First, for any initial values $(x^{\tau}_{0},y^{\tau}_{0})^{\top} \in \mathbb{R}^2$ and $\xi^{\tau}_{0} \in (\tfrac{\pi}{4}, \tfrac{\pi}{2})$ (i.e., $\rho_0\geq1$), we take $y^{\tau}_{1}=1$, 
\begin{equation*}
	\rho^{*}_{1}=\max\{ 3\tan(\tfrac{\pi}{2}-\delta), 3\rho_{0} \}, \quad \tau < \min \left\{ \frac{1}{1+4|\beta|\rho^{*}_{1}}, \frac{1}{1+4|\alpha|} \right\} , \quad \text{for} \ \ 0<\delta<\frac{\pi}{4}, 
\end{equation*}
and 
\begin{equation*}
	x^{\tau}_{1} = \left( \frac{1}{\tau} \frac{(1-\alpha\tau)(\rho^{*}_{1}-\rho_{0})-\beta\tau\rho_{0}\rho^{*}_{1}-\beta\tau}{3b-a\rho^{*}_{1}+b\rho_{0}\rho^{*}_{1}+3a\rho_{0}} \right)^{\frac{1}{2}}. 
\end{equation*}
It is worth noting that the expression for $x^{\tau}_{1}$ is meaningful since both its numerator and denominator are strictly positive. Then by \eqref{rho} we have $\rho_{1}= \rho^{*}_{1}>\rho_{0}$. 
Second, by taking $x^{\tau}_{2} = y^{\tau}_{2} = 0$, we obtain 
\begin{equation*}
	\rho_2 = \frac{\beta\tau+(1-\alpha\tau)\rho_1}{1-\alpha\tau-\beta\tau\rho_1} > \frac{1}{3} \rho_{1}, 
\end{equation*}
which means that $\rho_{2}>\tan(\tfrac{\pi}{2}-\delta)$ and thus $\xi^{\tau}_{2} \in (\xi^{*}_{2}, \tfrac{\pi}{2})$ with $\xi^{*}_{2}\in(\tfrac{\pi}{2}-\delta,\tfrac{\pi}{2})$. 
In this stage, we have shown that for any given $(x^{\tau}_{0},y^{\tau}_{0},\xi^{\tau}_{0})^{\top} \in \mathbb{R}^2\times(\tfrac{\pi}{4},\tfrac{\pi}{2})$, we can adjust $x^{\tau}_{1}$ and $y^{\tau}_{1}$ to ensure that $(x^{\tau}_{2},y^{\tau}_{2},\xi^{\tau}_{2})^{\top}$ can be set to $(0,0,\xi^*_{2})^{\top}$. 

In the following we will demonstrate that $(x^{\tau}_{2},y^{\tau}_{2},\xi^{\tau}_{2})^{\top}$ can enter a region in $\mathbb{R}^3$ with positive probability. 
It follows from \eqref{rho} that the function 
\begin{equation*}
	\rho_{2} = \frac{-M_{21}(x^{\tau}_{2},y^{\tau}_{2}) + M_{11}(x^{\tau}_{2},y^{\tau}_{2})\rho_{1}}{M_{22}(x^{\tau}_{2},y^{\tau}_{2}) - M_{12}(x^{\tau}_{2},y^{\tau}_{2})\rho_{1}} =: f_{\rho_{2}}(x^{\tau}_{2},y^{\tau}_{2},\rho_{1}) 
\end{equation*}
is continuous within the small neighborhood of $(0,0,\rho^{*}_{1})^{\top}$, which, together with \eqref{BEM2}, implies that 
\begin{equation*}
	\rho_{2} = f_{\rho_{2}}(x^{\tau}_{2},y^{\tau}_{2},\rho_{1}) = f_{\rho_{2}}(\cdot,\cdot,\rho_{1})\circ F_{\tau}(x^{\tau}_{1}+\sigma \Delta W^{1}_{2},y^{\tau}_{1}+\sigma \Delta W^{2}_{2}). 
\end{equation*}
An analogous analysis yields 
\begin{equation*}
	\rho_{1} = f_{\rho1}(\cdot,\cdot,\rho_{0})\circ F_{\tau}(x^{\tau}_{0}+\sigma \Delta W^{1}_{1},y^{\tau}_{0}+\sigma \Delta W^{2}_{1}). 
\end{equation*}
Recalling $\xi^{\tau}_{k} = \arctan \rho_{k}$, we can deduce that there exists a continuous function $H:\mathbb{R}^{7}\rightarrow\mathbb{R}^{3}$, such that \begin{equation*}
	(x^{\tau}_{2},y^{\tau}_{2},\xi^{\tau}_{2})^{\top} = H(x^{\tau}_{0}, y^{\tau}_{0}, \xi^{\tau}_{0}, \Delta W^{1}_{1}, \Delta W^{2}_{1}, \Delta W^{1}_{2}, \Delta W^{2}_{2}).  
\end{equation*}
Specially, for any $(x^{\tau}_{0}, y^{\tau}_{0}, \xi^{\tau}_{0})^{\top} \in \mathbb{R}^2 \times (\tfrac{\pi}{4}, \tfrac{\pi}{2})$, we can choose $\{\Delta W^{j}_{i}\}_{i,j=1,2}$ to reach $(x^{\tau}_{2}, y^{\tau}_{2}, \xi^{\tau}_{2})^{\top}=(0,0,\xi^{*}_{2})^{\top}$ with $\xi^{*}_{2}\in(\tfrac{\pi}{2}-\delta,\tfrac{\pi}{2})$ for $\delta\in(0,\frac{\pi}{4})$. 
By the continuity of $H$, for any $\delta\in(0,\frac{\pi}{4})$, there exists a constant $\delta_{1}>0$, such that 
\begin{equation*}
	\mathbb{P}\left( (x^{\tau}_{2}, y^{\tau}_{2}, \xi^{\tau}_{2})^{\top} \in B_{\delta}(0,0)\times(\tfrac{\pi}{2}-\delta,\tfrac{\pi}{2}) \right) > \mathbb{P}\left( \{\Delta W^{j}_{i}\}_{i,j=1,2}\in\bm{C}_{\delta_{1}} \right) > 0, 
\end{equation*}
where $B_{\delta}(0,0)\subset\mathbb{R}^2$ is a ball centered at $(0,0)^{\top}$ with radius $\delta$,  $\bm{C}_{\delta_{1}} \subset \mathbb{R}^4$ is a cube depending on $\delta_{1}$, and we have used the fact that the probability of $\Delta W^{j}_{i}$ in such cube is positive. 

\textit{Case 2. }
For the case of $(x^{\tau}_{0},y^{\tau}_{0})^{\top} \in \mathbb{R}^2$ and $\xi^{\tau}_{0} \in (\tfrac{\pi}{2}, \tfrac{3\pi}{4})$, the analysis is analogous to Case 1 so we only sketch it. 
In this case we assume $b>4a$ and $\tau < \frac{a}{1+4b|\beta|}$. 
We first take $y^{\tau}_{1}=0$ and 
\begin{equation*}
	x^{\tau}_{1} = \left( \frac{1}{\tau} \frac{3a\beta\tau\rho_{0}-(1-\alpha\tau)(3a+b\rho_{0})-b\beta\tau}{3b^2+3a^2} \right)^{\frac{1}{2}}. 
\end{equation*}
Then from \eqref{rho} we obtain $\rho_{1}=-\tfrac{3a}{b}$. 
Secondly, we take $y^{\tau}_{2}=0$ and 
\begin{equation*}
	x^{\tau}_{2} = \left( \frac{1}{\tau} \frac{(1-\alpha\tau)(\sqrt{3}-\rho_{1})-\sqrt{3}\beta\tau\rho_{1}-\beta\tau}{3b-\sqrt{3}a+\sqrt{3}b\rho_{1}+3a\rho_{1}} \right)^{\frac{1}{2}}. 
\end{equation*}
Then from \eqref{rho} we obtain $\rho_{2}=\sqrt{3}$, which reduces to Case 1. 

\textit{Case 3. }
We consider the case of $(x^{\tau}_{0},y^{\tau}_{0})^{\top} \in \mathbb{R}^2$ and $\xi^{\tau}_{0} =\tfrac{\pi}{2}$, which corresponds to $\rho^{-1}_{0}=0$. Firstly we take $x^{\tau}_{0}=y^{\tau}_{0}=0$, then from \eqref{rho} we have 
\begin{equation*}
	\rho^{-1}_{1} = \frac{\rho^{-1}_{0}M_{22}(x^{\tau}_{1},y^{\tau}_{1}) - M_{12}(x^{\tau}_{1},y^{\tau}_{1})}{-\rho^{-1}_{0}M_{21}(x^{\tau}_{1},y^{\tau}_{1}) + M_{11}(x^{\tau}_{1},y^{\tau}_{1})}
	= \frac{-M_{12}(x^{\tau}_{1},y^{\tau}_{1})}{M_{11}(x^{\tau}_{1},y^{\tau}_{1})}
	= \frac{-\beta\tau}{1-\alpha\tau}. 
\end{equation*}
Secondly, we take $x^{\tau}_{1}=y^{\tau}_{1}=0$ and 
$\tau < \min \{ \frac{1}{8|\beta|\tan(\frac{\pi}{2}-\delta)}, \frac{1}{1+4|\alpha|}, \frac{1}{1+4|\beta|} \}$ for $\delta\in(0,\frac{\pi}{4})$, 
then again from \eqref{rho} we obtain 
\begin{equation*}
	\rho_{2} = \frac{-M_{21}(x^{\tau}_{2},y^{\tau}_{2}) + M_{11}(x^{\tau}_{2},y^{\tau}_{2})\rho_{1}}{M_{22}(x^{\tau}_{2},y^{\tau}_{2}) - M_{12}(x^{\tau}_{2},y^{\tau}_{2})\rho_{1}} 
	= \frac{\beta^2\tau^2-(1-\alpha\tau)^2}{2\beta\tau(1-\alpha\tau)}. 
\end{equation*}
This means that $\rho_{2}>\tan(\tfrac{\pi}{2}-\delta)$ for $\beta\leq0$ while $\rho_{2}<-\tan(\tfrac{\pi}{2}-\delta)$ for $\beta>0$. 
Consequently, we have  $\xi^{\tau}_{2}=\arctan\rho_{2}\in(\tfrac{\pi}{2}-\delta, \tfrac{\pi}{2}+\delta)$. 

Combining these three cases, for any $(x^{\tau}_{0},y^{\tau}_{0},\xi^{\tau}_{0})^{\top} \in \mathbb{R}^2\times(\tfrac{\pi}{4}, \tfrac{3\pi}{4})$ and $\delta\in(0,\tfrac{\pi}{4})$, when $\tau$ is sufficiently small, we can conclude that  
\begin{equation*}
	\mathbb{P}\left( (x^{\tau}_{4}, y^{\tau}_{4}, \xi^{\tau}_{4})^{\top} \in B_{\delta}(0,0,\tfrac{\pi}{2}) \right) > 0. 
\end{equation*}
%Furthermore, since the process $(x^{\tau}_{k}, y^{\tau}_{k}, \xi^{\tau}_{k})^{\top}$ are continuous with respect to $\{\Delta W^{j}_{i}\}_{i=1,2,...,k; j=1,2}$, and $\{\Delta W^{j}_{i}\}$ have $C^{\infty}$ density, we know that the transition probability for the process $(X^{\tau}_{k},\xi^{\tau}_{k})^{\top}$ has a $C^{\infty}$ density when $\tau$ is sufficiently small. 
This verifies the minorization condition.

\bibliographystyle{plain}
\bibliography{srb.bib}

\begin{thebibliography}{10}

\bibitem{Arnaudon2017}
A.~Arnaudon, A.L. De~Castro, and D.D. Holm.
\newblock Noise and dissipation in rigid body motion.
\newblock In {\em Stochastic geometric mechanics}, volume 202 of {\em Springer
  Proc. Math. Stat.}, pages 1--12. Springer, Cham, 2017.

\bibitem{Arnaudon2018}
A.~Arnaudon, A.L. De~Castro, and D.D. Holm.
\newblock Noise and dissipation on coadjoint orbits.
\newblock {\em J. Nonlinear Sci.}, 28(1):91--145, 2018.

\bibitem{arnoldRDS}
L.~Arnold.
\newblock {\em Random dynamical systems}.
\newblock Springer Monographs in Mathematics. Springer-Verlag, Berlin, 1998.

\bibitem{Arnold2000}
L.~Arnold and P.~Imkeller.
\newblock The {K}ramers oscillator revisited.
\newblock In {\em Stochastic Processes in Physics, Chemistry, and Biology},
  pages 280--291, Berlin, Heidelberg, 2000. Springer Berlin Heidelberg.

\bibitem{Baxendale1994}
Peter~H. Baxendale.
\newblock A stochastic {H}opf bifurcation.
\newblock {\em Probab. Theory Related Fields}, 99(4):581--616, 1994.

\bibitem{Baxendale2024}
P.H. Baxendale.
\newblock Lyapunov exponents and shear-induced chaos for a hopf bifurcation
  with additive noise.
\newblock {\em Probab. Theory Relat. Fields},
  https://doi.org/10.1007/s00440-024-01301-4, 2024.

\bibitem{Blumenthal2019}
A.~Blumenthal and L.-S. Young.
\newblock Equivalence of physical and {SRB} measures in random dynamical
  systems.
\newblock {\em Nonlinearity}, 32(4):1494--1524, 2019.

\bibitem{Chekroun2011}
M.~D. Chekroun, E.~Simonnet, and M.~Ghil.
\newblock Stochastic climate dynamics: random attractors and time-dependent
  invariant measures.
\newblock {\em Phys. D}, 240(21):1685--1700, 2011.

\bibitem{Chemnitz2023}
D.~Chemnitz and M.~Engel.
\newblock Positive {L}yapunov exponent in the {H}opf normal form with additive
  noise.
\newblock {\em Comm. Math. Phys.}, 402(2):1807--1843, 2023.

\bibitem{Chen20252}
C.C. Chen, T.H. Dang, J.L. Hong, and G.T. Song.
\newblock {On numerical discretizations that preserve probabilistic limit
  behaviors for time-homogeneous Markov processes}.
\newblock {\em Bernoulli}, 31(4):3139--3164, 2025.

\bibitem{Chen2025}
C.C. Chen, T.H. Dang, J.L. Hong, and F.S. Zhang.
\newblock A new class of splitting methods that preserve ergodicity and
  exponential integrability for the stochastic {L}angevin equation.
\newblock {\em SIAM J. Numer. Anal.}, 63(2):1000--1024, 2025.

\bibitem{Chen2023}
C.C. Chen, J.L. Hong, and Y.L. Lu.
\newblock Stochastic differential equation with piecewise continuous arguments:
  {M}arkov property, invariant measure and numerical approximation.
\newblock {\em Discrete Contin. Dyn. Syst. Ser. B}, 28(1):765--807, 2023.

\bibitem{Crauel1994}
H.~Crauel and F.~Flandoli.
\newblock Attractors for random dynamical systems.
\newblock {\em Probab. Theory Related Fields}, 100(3):365--393, 1994.

\bibitem{Deville2011}
R.~E.~L. Deville, N.~S. Namachchivaya, and Z.~Rapti.
\newblock Stability of a stochastic two-dimensional non-{H}amiltonian system.
\newblock {\em SIAM J. Appl. Math.}, 71(4):1458--1475, 2011.

\bibitem{Doan2018}
T.S. Doan, M.~Engel, J.~S.~W. Lamb, and M.~Rasmussen.
\newblock Hopf bifurcation with additive noise.
\newblock {\em Nonlinearity}, 31(10):4567--4601, 2018.

\bibitem{Duan2003}
J.Q. Duan, K.N. Lu, and B.~Schmalfuss.
\newblock Invariant manifolds for stochastic partial differential equations.
\newblock {\em Ann. Probab.}, 31(4):2109--2135, 2003.

\bibitem{Eckmann1985}
J.-P. Eckmann and D.~Ruelle.
\newblock Ergodic theory of chaos and strange attractors.
\newblock {\em Rev. Modern Phys.}, 57(3):617--656, 1985.

\bibitem{Flandoli2017}
F.~Flandoli, B.~Gess, and M.~Scheutzow.
\newblock Synchronization by noise.
\newblock {\em Probab. Theory Related Fields}, 168(3-4):511--556, 2017.

\bibitem{Higham2002}
D.~J. Higham, X.R. Mao, and A.~M. Stuart.
\newblock Strong convergence of {E}uler-type methods for nonlinear stochastic
  differential equations.
\newblock {\em SIAM J. Numer. Anal.}, 40(3):1041--1063, 2002.

\bibitem{Keller1999}
H.~Keller and G.~Ochs.
\newblock Numerical approximation of random attractors.
\newblock In {\em Stochastic dynamics ({B}remen, 1997)}, pages 93--115.
  Springer, New York, 1999.

\bibitem{Ledrappier1988}
F.~Ledrappier and L.-S. Young.
\newblock Entropy formula for random transformations.
\newblock {\em Probab. Theory Related Fields}, 80(2):217--240, 1988.

\bibitem{Lin2008}
K.~K. Lin and L.-S. Young.
\newblock Shear-induced chaos.
\newblock {\em Nonlinearity}, 21(5):899--922, 2008.

\bibitem{Mao2013}
X.R. Mao and L.~Szpruch.
\newblock Strong convergence rates for backward {E}uler--{M}aruyama method for
  non-linear dissipative-type stochastic differential equations with
  super-linear diffusion coefficients.
\newblock {\em Stochastics}, 85(1):144--171, 2013.

\bibitem{Mattingly2002}
J.~C. Mattingly, A.~M. Stuart, and D.~J. Higham.
\newblock Ergodicity for {SDE}s and approximations: locally {L}ipschitz vector
  fields and degenerate noise.
\newblock {\em Stochastic Process. Appl.}, 101(2):185--232, 2002.

\bibitem{Ochs2001}
G.~Ochs.
\newblock Random attractors: robustness, numerics and chaotic dynamics.
\newblock In {\em Ergodic theory, analysis, and efficient simulation of
  dynamical systems}, pages 1--30. Springer, Berlin, 2001.

\bibitem{Ortega2000}
J.~M. Ortega and W.~C. Rheinboldt.
\newblock {\em Iterative solution of nonlinear equations in several variables},
  volume~30 of {\em Classics in Applied Mathematics}.
\newblock Society for Industrial and Applied Mathematics (SIAM), Philadelphia,
  PA, 2000.
\newblock Reprint of the 1970 original.

\bibitem{Wieczorek2009}
S.~Wieczorek.
\newblock Stochastic bifurcation in noise-driven lasers and {H}opf oscillators.
\newblock {\em Phys. Rev. E (3)}, 79(3):036209, 10, 2009.

\bibitem{Young2002}
L.-S. Young.
\newblock What are {SRB} measures, and which dynamical systems have them?
\newblock volume 108, pages 733--754. 2002.
\newblock Dedicated to David Ruelle and Yasha Sinai on the occasion of their
  65th birthdays.

\bibitem{Young2017}
L.-S. Young.
\newblock Generalizations of {SRB} measures to nonautonomous, random, and
  infinite dimensional systems.
\newblock {\em J. Stat. Phys.}, 166(3-4):494--515, 2017.

\end{thebibliography}

\end{document}